\numberwithin{equation}{section}
  \theoremstyle{plain}
  \newtheorem{thm}{\protect\theoremname}[section]
  \theoremstyle{remark}
  \newtheorem{rem}{\protect\remarkname}[section]
  \theoremstyle{definition}
  \newtheorem{defn}{\protect\definitionname}[section]
  \theoremstyle{plain}
  \newtheorem{prop}{\protect\propositionname}[section]
  \theoremstyle{plain}
  \newtheorem{lem}{\protect\lemmaname}[section]
  \theoremstyle{plain}
  \newtheorem{cor}{\protect\corollaryname}[section]
  \providecommand{\definitionname}{Definition}
  \providecommand{\lemmaname}{Lemma}
  \providecommand{\propositionname}{Proposition}
  \providecommand{\remarkname}{Remark}
\providecommand{\corollaryname}{Corollary}
\providecommand{\theoremname}{Theorem}
\begin{document}

\title{Quasi-sure Existence of Gaussian Rough Paths and Large Deviation
Principles for Capacities}

\author{H. Boedihardjo%
\thanks{Oxford-Man Institute, University of Oxford, Oxford OX2 6ED, England.
Email: horatio.boedihardjo@oxford-man.ox.ac.uk %
}, X. Geng%
\thanks{Mathematical Institute, University of Oxford, Oxford OX2 6GG and the
Oxford-Man Institute, Eagle House, Walton Well Road, Oxford OX2 6ED.\protect \\
 Email: xi.geng@maths.ox.ac.uk %
} and Z. Qian%
\thanks{Exeter College, University of Oxford, Oxford OX1 3DP, England. Email:
qianz@maths.ox.ac.uk.%
}}
\date{}
\maketitle
\begin{abstract}
We construct a quasi-sure version (in the sense of Malliavin) of geometric
rough paths associated with a Gaussian process with long-time memory.
As an application we establish a large deviation principle (LDP) for
capacities for such Gaussian rough paths. Together with Lyons' universal
limit theorem, our results yield immediately the corresponding results
for pathwise solutions to stochastic differential equations driven
by such Gaussian process in the sense of rough paths. Moreover, our
LDP result implies the result of Yoshida on the LDP for capacities
over the abstract Wiener space associated with such Gaussian process.
\end{abstract}

\section{Introduction}

The theory of rough paths, established by Lyons in his groundbreaking
paper \cite{Lyons}, gives us a fundamental way of understanding path
integrals along one forms and pathwise solutions to differential equations
driven by rough signals. After his work, the study of the (geometric)
rough path nature of stochastic processes (e.g. Brownian motion, Markov
processes, martingales, Gaussian processes, etc.) becomes rather important,
since it will then immediately lead to a pathwise theory of stochastic
differential equations driven by such processes, which is one of the
central problems in stochastic analysis. The rough path regularity
of Brownian motion was first studied in the unpublished Ph.D. thesis
of Sipiläinen \cite{Sipilainen}. Later on Coutin and Qian \cite{CQ}
proved that the sample paths of fractional Brownian motion with Hurst
parameter $H>1/4$ can be lifted as geometric rough paths in a canonical
way, and such canonical lifting does not exist when $H\leqslant1/4$.
Of course their result covers the Brownian motion case. The systematic
study of stochastic processes as rough paths then appeared in the
monographs on rough path theory by Lyons and Qian \cite{LQ} and by
Friz and Victoir \cite{FV}.

The continuity of the solution map for rough differential equations,
which was also proved by Lyons \cite{Lyons} and usually known as
the universal limit theorem, is a fundamental result in rough path
theory. To some extend it gives us a way of understanding the right
topology under which differential equations are stable on rough path
space. An easy but important application of the universal limit theorem
is large deviation principles (or simply LDPs) for pathwise solutions
to stochastic differential equations according to the contraction
principle, once the LDP for the law of the driving process as rough
paths is established under the rough path topology. This is also the
main motivation of strengthening the classical LDPs for probability
measures on path space under the uniform topology to the rough path
setting. Since the rough path topology is stronger than the uniform
topology, a direct corollary is the classical Freidlin-Wentzell theory
on path space, which does not follow immediately from the contraction
principle and is in fact highly nontrivial as the solution map is
not continuous in this case. In the case of Brownian motion, Ledoux,
Qian and Zhang first established the LDP for the law of Brownian rough
paths. Their result was then extended to the case of fractional Brownian
motion by Millet and Sanz-Solé \cite{MS}. The general study of LDPs
for different stochastic processes in particular for Gaussian processes
as rough paths can be found in \cite{FV}. 

We first recall some basic notions from rough path theory which we
use throughout the rest of this article. We refer the readers to \cite{FV},
\cite{LCL}, \cite{LQ} for a detailed presentation.

For $n\geqslant1,$ let 
\[
T^{(n)}\left(\mathbb{R}^{d}\right)=\oplus_{i=0}^{n}\left(\mathbb{R}^{d}\right)^{\otimes i}
\]
be the truncated tensor algebra over $\mathbb{R}^{d}$ degree $n$,
where $\left(\mathbb{R}^{d}\right)^{\otimes0}:=0.$ We use $\Delta$
to denote the standard $2$-simplex $\{(s,t):\ 0\leqslant s\leqslant t\leqslant1\}$.

We call an $\mathbb{R}^{d}$-valued continuous paths over $[0,1]$
\textit{smooth} if it has bounded total variation. Given a smooth
path $w,$ for $k\in\mathbb{N}$ define

\begin{equation}
w_{s,t}^{k}=\int\limits _{s<t_{1}<\cdots<t_{k}<t}dw_{t_{1}}\otimes\cdots\otimes dw_{t_{k}},\ (s,t)\in\Delta.\label{signature}
\end{equation}
From classical integration theory we know that (\ref{signature})
is well-defined as the limit of Riemann-Stieltjes sums. Let $\boldsymbol{w}:\Delta\rightarrow T^{(n)}\left(\mathbb{R}^{d}\right)$
be the functional given by 
\[
\boldsymbol{w}_{s,t}=\left(1,w_{s,t}^{1},\cdots,w_{s,t}^{n}\right)\text{, \ \ \ }(s,t)\in\Delta\text{.}
\]
This is usually called the \textit{lifting} of $w$ up to degree $n$.
The additivity property of integration over disjoint intervals is
then summarized as the following so-called \textit{Chen's identity}:
\begin{equation}
\boldsymbol{w}_{s,u}\otimes\boldsymbol{w}_{u,t}=\boldsymbol{w}_{s,t}\text{ \ \ \ }\forall0\leqslant s\leqslant u\leqslant t\leqslant1.\label{Chen's identity}
\end{equation}
We use $\Omega_{n}^{\infty}\left(\mathbb{R}^{d}\right)$ to denote
the space of all such functionals which are liftings of smooth paths
$w.$ In the definition of $\Omega_{n}^{\infty}$, the starting point
of the path is irrelevant, and we always assume that paths start at
the origin.

Let $p\geqslant1$ be fixed and $[p]$ denote the integer part of
$p$ (not greater than $p$). The $p$-\textit{variation metric }$d_{p}$
on $\Omega_{[p]}^{\infty}$ is defined by 
\[
d_{p}(\boldsymbol{u},\boldsymbol{w})=\max_{1\leqslant i\leqslant[p]}\sup_{D}\left(\sum_{l}\left\vert u_{t_{l-1},t_{l}}^{i}-w_{t_{l-1},t_{l}}^{i}\right\vert ^{\frac{p}{i}}\right)^{\frac{i}{p}},
\]
where the supremum $\sup_{D}$ is taken over all possible finite partitions
of $[0,1]$. The completion of $\Omega_{[p]}^{\infty}$ under $d_{p}$
is called the space of \textit{geometric $p$-rough paths} over $\mathbb{R}^{d}$,
and it is denoted by $G\Omega_{p}\left(\mathbb{R}^{d}\right)$. If
$\boldsymbol{w}=\left(1,w^{1},\cdots,w^{[p]}\right)\in G\Omega_{p}\left(\mathbb{R}^{d}\right)$,
then $\boldsymbol{w}$ also satisfies Chen's identity (\ref{Chen's identity})
in $T^{[p]}\left(\mathbb{R}^{d}\right)$, and $\boldsymbol{w}$ has
finite $p$-variation in the sense that $\sup_{D}\sum_{l}\left\vert w_{t_{l-1},t_{l}}\right\vert ^{\frac{p}{i}}<\infty$
for all $1\leqslant i\leqslant\lbrack p]$.

The fundamental result in rough path theory is the following so-called
\textit{Lyons' universal limit theorem} (see \cite{Lyons}, and also
\cite{FV}, \cite{LQ}) for differential equations driven by geometric
rough paths.
\begin{thm}
\label{Universal Limit Theorem}Let $\{V_{1},\cdots,V_{d}\}$ be a
family of $\gamma$-Lipschitz vector fields on $\mathbb{R}^{N}$ for
some $\gamma>p.$ For any given $x_{0}\in\mathbb{R}^{N},$ define
the map
\[
F(x_{0},\cdot):\ \Omega_{[p]}^{\infty}\left(\mathbb{R}^{d}\right)\rightarrow G\Omega_{p}\left(\mathbb{R}^{N}\right)
\]
in the following way. For any $\boldsymbol{w}\in\Omega_{[p]}^{\infty}\left(\mathbb{R}^{d}\right)$
which is the lifting of some smooth path $w$, let $x$ be the unique
smooth path which is the solution in $\mathbb{R}^{N}$ of the ODE
\[
dx_{t}=\sum_{\alpha=1}^{d}V_{\alpha}(x_{t})dw_{t}^{\alpha},\ t\in[0,1],
\]
with initial value $x_{0}.$ $F(x_{0},\boldsymbol{w})$ is then defined
to be the lifting of $x$ in $\Omega_{p}^{\infty}(\mathbb{R}^{N}).$
Then the map $F(x_{0},\cdot)$ is uniformly continuous on bounded
sets with respect to the $p$-variation metric. 
\end{thm}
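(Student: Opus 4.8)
The overall plan is to follow the classical route of Lyons \cite{Lyons}, as presented in \cite{LQ} and \cite{FV}: construct the rough integral against a fixed driving rough path, solve the equation as a fixed point on short intervals, concatenate the local solutions, and finally extract the continuity estimates. A first remark is that the hypothesis $\gamma>p$ is exactly the regularity threshold that makes these constructions run at level $[p]$: the coefficients assemble into a single $\mathrm{Lip}(\gamma)$ map $V:\mathbb{R}^N\to L(\mathbb{R}^d,\mathbb{R}^N)$, the integrand appearing in the equation is then a $\mathrm{Lip}(\gamma-1)$ object, and $\gamma-1>p-1\geqslant[p]-1$ gives just enough smoothness to define and control the rough integral at level $[p]$. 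Since the domain of $F(x_0,\cdot)$ as stated consists of liftings of smooth paths, on which the rough integral reduces to the Riemann--Stieltjes integral and the rough solution coincides with the classical ODE solution, it suffices to establish the quantitative stability estimates of rough path theory and then restrict to smooth drivers; those estimates depend on the driver only through its $p$-variation, so uniform continuity on $d_p$-bounded subsets of $\Omega_{[p]}^{\infty}(\mathbb{R}^d)$ follows immediately (and, although not needed here, the map extends continuously to all of $G\Omega_p(\mathbb{R}^N)$).

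\textbf{Step 1: rough integration and its stability.} Given a geometric $p$-rough path $\boldsymbol{z}$ over $\mathbb{R}^N$, form the truncated Taylor functional $(s,t)\mapsto\sum_{k\leqslant[p]}V^{(k)}(z_s)\,z_{s,t}^{k}$. Using the $\mathrm{Lip}(\gamma-1)$ bound on $V$ together with Chen's identity \eqref{Chen's identity}, one checks that this is an almost multiplicative functional of finite $p$-variation, with control constant depending only on $\gamma$, $p$, $\|V\|_{\mathrm{Lip}(\gamma)}$ and the $p$-variation of $\boldsymbol{z}$. Lyons' extension theorem (the sewing lemma) then produces a \emph{unique} multiplicative functional $\int V(\boldsymbol{z})\,d\boldsymbol{z}$ lying close to it, together with the corresponding quantitative bound on its $p$-variation. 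One also needs the comparison form of this construction: if $\boldsymbol{z}$ and $\tilde{\boldsymbol{z}}$ are two $p$-rough paths in a fixed $d_p$-ball with $d_p(\boldsymbol{z},\tilde{\boldsymbol{z}})$ small, then the associated integrals are $d_p$-close, with a modulus depending only on the radius of the ball and on the vector fields.

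\textbf{Steps 2 and 3: local fixed point and globalisation.} Working on the joint rough path $(\boldsymbol{w},\boldsymbol{z})$ over $\mathbb{R}^d\oplus\mathbb{R}^N$, consider the map $\Phi:\boldsymbol{z}\mapsto x_0+\int V(\boldsymbol{z})\,d\boldsymbol{w}$, restricted to rough paths over $\mathbb{R}^N$ starting at $x_0$ whose $p$-variation is at most a constant to be chosen. The estimates of Step 1 show that on any subinterval on which the $p$-variation of $\boldsymbol{w}$ is smaller than some $\varepsilon=\varepsilon(\gamma,p,\|V\|_{\mathrm{Lip}(\gamma)})$, the map $\Phi$ maps a suitable ball into itself and is a strict contraction in $d_p$; the Banach fixed point theorem gives a unique local solution, and the contraction estimate gives Lipschitz dependence of this local solution on $\boldsymbol{w}$, with constants again depending only on $\gamma$, $p$ and $\|V\|_{\mathrm{Lip}(\gamma)}$. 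Given $R>0$, any $\boldsymbol{w}$ with $p$-variation at most $R$ can be split into at most $\lceil R/\varepsilon\rceil+1$ consecutive subintervals to which the local theory applies; concatenating the local solutions by means of Chen's identity \eqref{Chen's identity} produces the global solution on $[0,1]$, and $F(x_0,\boldsymbol{w})$ is its lifting. To compare $F(x_0,\boldsymbol{w})$ with $F(x_0,\tilde{\boldsymbol{w}})$ for two drivers in the $R$-ball, one propagates the local Lipschitz estimate across the uniformly bounded number of subintervals --- a discrete Gronwall argument in which Chen's identity controls how the discrepancy at the endpoint of one interval enters as initial data for the next --- obtaining a bound of the form $d_p\bigl(F(x_0,\boldsymbol{w}),F(x_0,\tilde{\boldsymbol{w}})\bigr)\leqslant C(R,\gamma,p,\|V\|_{\mathrm{Lip}(\gamma)})\,d_p(\boldsymbol{w},\tilde{\boldsymbol{w}})$, which is stronger than, and hence implies, uniform continuity on $d_p$-bounded sets.

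\textbf{Main obstacle.} The delicate point is the globalisation in Step 3. The $p$-variation of the \emph{solution} is not available a priori --- it is itself part of what one is solving for --- so the splitting into short intervals must be performed relative to a control that simultaneously dominates $\boldsymbol{w}$ and the unknown $\boldsymbol{z}$, and one must verify that both the number of pieces and the amplification of the error across each piece remain bounded purely in terms of $R$ and the vector fields. Establishing the a priori estimate that the solution's $p$-variation is controlled by that of the driver, and then checking that the errors compounded over all the pieces do not blow up, is the technical heart of the argument; the remaining work is the routine, if lengthy, bookkeeping of binomial-type inequalities for $p$-variation norms, for which we refer to \cite{Lyons}, \cite{LQ} and \cite{FV}.
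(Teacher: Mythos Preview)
The paper does not actually prove Theorem~\ref{Universal Limit Theorem}: it is stated as background, with the proof deferred entirely to the references \cite{Lyons}, \cite{LQ}, \cite{FV}. So there is no ``paper's own proof'' to compare against; your sketch is a faithful outline of the standard argument found in those references --- rough integration via Lyons' extension theorem, Picard iteration on short intervals, and concatenation with a discrete Gronwall estimate --- and as such is consistent with what the paper cites.
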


\begin{rem}
Theorem \ref{Universal Limit Theorem} is not the original version
of Lyons' result in \cite{Lyons} but an equivalent form. The original
result of Lyons is formulated in terms of rough path integrals and
does not restrict to geometric rough paths only. Here we state the
result in a more elementary form to avoid the machinery of rough path
integrals. 
\end{rem}

The theory of rough paths can be applied to quasi-sure analysis for
Gaussian measures on path space. The notion of quasi-sure analysis
was originally introduced by Malliavin \cite{Malliavin82} (see also
\cite{Malliavin}) to the study of non-degenerate conditioning and
disintegration of Gaussian measures on abstract Wiener spaces. The
fundamental concept in quasi-sure analysis is capacity, which specifies
more precise scales for ``negligible'' subsets of an abstract Wiener
space. In particular, a set of capacity zero is always a null set,
while in general a null set may have positive capacity. According
to Malliavin, the theory of quasi-sure analysis can be regarded as
an infinite dimensional version of non-linear potential theory. It
enables us to disintegrate a Gaussian measure continuously in the
infinite dimensional setting, which for instance applies to the study
of bridge processes and pinned diffusions. Moreover, it also leads
to sharper estimates than classical methods.

The main goal of the present article is to initiate the study of Gaussian
rough paths in the setting of quasi-sure analysis. Due to power tools
in rough path theory, our results lead to the verification of many
classical results for the quasi-sure analysis on Wiener space.

The first aim of this article is to study the quasi-sure existence
of canonical lifting for sample paths of Gaussian processes as geometric
rough paths. The Brownian motion case was studied by Inahama \cite{Inahama1}
under the $p$-variation metric, and Aida \cite{Aida}, Higuchi \cite{Higuchi},
Inahama \cite{Inahama2} and Watanabe \cite{Watanabe} independently
under the Besov norm, by exploiting methods from the Malliavin calculus.
More precisely, it was proved that for quasi-surely, Brownian sample
paths can be lifted as geometric $p$-rough paths for $2<p<3$ . In
the next section, we extend this result to a class of Gaussian processes
with long-time memory which includes fractional Brownian motion, by
applying techniques both from rough path theory and the Malliavin
calculus. Combining our result with Lyons' universal limit theorem,
we obtain immediately a quasi-sure limit theorem for pathwise solutions
to stochastic differential equations driven by Gaussian processes,
which improves the Wong-Zakai type limit theorem and its quasi-sure
version (see for example Ren \cite{Ren}, Malliavin-Nualart \cite{MN}
and the references therein).

The technique we use in the next section enables us to establish a
large deviation principle for capacities for Gaussian rough paths
with long-time memory, which is the second aim of this article. LDPs
for capacities for transformations on an abstract Wiener space was
first studied by Yoshida \cite{Yoshida}. The general definition and
the basic properties of LDPs for induced capacities on a Polish space
first appeared in Gao and Ren \cite{GR}, in which the case of stochastic
flows driven by Brownian motion was also investigated. Before establishing
our LDP result, we first prove two fundamental results on transformations
of LDPs for capacities: the contraction principle and exponential
good approximations, which are both easy adaptations from the classical
results for probability measures. Our LDP result is then based on
the result and method developed in the next section and finite dimensional
approximations. It turns out that the general result of Yoshida in
the case of Gaussian processes is a direct corollary of our result
due to the continuity of the projection map from a geometric rough
path onto its first level path. The original proof of Yoshida relies
crucially on the infinite dimensional structure of abstract Wiener
space, and in particular deep properties of capacity and analytic
properties of the Ornstein-Uhlenbeck semigroup. However, our technique
here replies only on basic properties of capacity and finite dimensional
Gaussian spaces. Moreover, again from Lyons' universal limit theorem,
our LDP result immediately yields the LDPs for capacities for pathwise
solutions to stochastic differential equations driven by Gaussian
processes. In this respect our result is stronger than the result
of Yoshida since we are working in a stronger topology (the $p$-variation
topology) instead of the uniform topology, which is too weak to support
the continuity of the solution map for differential equations. It
is also interesting to note that Inahama \cite{Inahama2} was already
able to applied techniques from quasi-sure analysis to establish LDPs
for pinned diffusion measures.

\section{Quasi-sure Existence of Gaussian Rough Paths}

In the present article, we consider the following class of Gaussian
processes with long-time memory in the sense of Coutin-Qian \cite{CQ}.
\begin{defn}
\label{long time memory}A $d$-dimensional centered, continuous Gaussian
process $\{B_{t}\}_{t\geqslant0}$ starting at the origin with independent
components is said to have $h$-\textit{long-time memory} for some
$0<h<1$ and if there is a constant $C_{h}$ such that 
\[
\mathbb{E}\left[\left\vert B_{t}-B_{s}\right\vert ^{2}\right]\leqslant C_{h}\left\vert t-s\right\vert ^{2h}
\]
for $s,t\geqslant0$ and
\[
\left\vert \mathbb{E}\left[\left(B_{t}^{i}-B_{s}^{i}\right)\left(B_{t+\tau}^{i}-B_{s+\tau}^{i}\right)\right]\right\vert \leqslant C_{h}\tau^{2h}\left\vert \frac{t-s}{\tau}\right\vert ^{2}
\]
for $1\leqslant i\leqslant d,$ $s,t\geqslant0$, $\tau>0$ with $(t-s)/\tau\leqslant1$. 
\end{defn}

A fundamental example of Gaussian processes with long-time memory
is fractional Brownian motion with $h$ being the Hurst parameter
(see \cite{LQ}). 

From now on, we always assume that such Gaussian process is realized
on the path space over the finite time period $[0,1]$. This is of
course equivalent to the consideration of the process over any $[0,T].$
Let $W$ be the space of all $\mathbb{R}^{d}$-valued continuous paths
$w$ over $[0,1]$ with $w_{0}=0,$ and equip $W$ with the Borel
$\sigma$-algebra $\mathcal{B}(W)$. Let $\mathbb{P}$ be the law
on $(W,\mathcal{B}(W))$ of some Gaussian process with $h$-long-time
memory in the sense of Definition \ref{long time memory}.

It is a fundamental result of Coutin and Qian \cite{CQ} that if $h>1/4,\ 2<p<4$
with $hp>1,$ then outside a $\mathbb{P}$-null set each sample path
$w\in W$ can be lifted as geometric $p$-rough paths in a canonical
way. More precisely, for $m\geqslant1$, let $t_{m}^{k}=k/2^{m}$
($k=0,1,\cdots,2^{m}$) be the $m$-th dyadic partition of $[0,1].$
Given $w\in W$, define $w^{(m)}$ to be the dyadic piecewise linear
interpolation of $w$ by 
\[
w_{t}^{(m)}=w_{t_{m}^{k-1}}+2^{m}\left(t-t_{m}^{k-1}\right)\left(w_{t_{m}^{k}}-w_{t_{m}^{k-1}}\right),\ t\in\left[t_{m}^{k-1},t_{m}^{k}\right],
\]
and let 
\[
\boldsymbol{w}_{s,t}^{(m)}=\left(1,w_{s,t}^{(m),1},w_{s,t}^{(m),2},w_{s,t}^{(m),3}\right),\ (s,t)\in\Delta,
\]
be the geometric rough path associated with $w^{(m)}$ up to level
3. Let $\mathcal{A}_{p}$ be the totality of all $w\in W$ such that
$\{\boldsymbol{w}^{(m)}\}_{m\geqslant1}$ is a Cauchy sequence under
the $p$-variation metric $d_{p}$. Then $\mathcal{A}_{p}^{c}$ is
a $\mathbb{P}$-null set and hence $\boldsymbol{w}^{(m)}$ converges
to a unique geometric $p$-rough path $\boldsymbol{w}$ for $\mathbb{P}$-almost-surely.
The convergence holds in $L^{1}(W,\mathbb{P})$ as well.
\begin{rem}
Although a geometric $p$-rough path is defined up to level $[p],$
by Lyons' extension theorem (see \cite{Lyons}) it does not make a
difference if we always consider up to level $3$ under $d_{p}$ since
$2<p<4$.
\end{rem}

\begin{rem}
Coutin and Qian \cite{CQ} also showed that if $h\leqslant1/4,$ no
subsequence of $\boldsymbol{w}_{s,t}^{(m)}$ converges in probability
or in $L^{1}$, and hence such canonical lifting of sample paths as
geometric rough paths does not exist.
\end{rem}

The goal of this section is to strengthen the result of Coutin-Qian
to the quasi-sure setting in the sense of Malliavin. The main result
and technique developed in this section are essential to establish
a large deviation principle for capacities as we will see later on.

Throughout the rest of this article, we fix $h\in(1/4,1/2]$, $p\in(2,4)$
with $hp>1$ (the case of $h>1/2$ is trivial from the rough path
point of view), and consider a $d$-dimensional Gaussian process with
$h$-long-time memory. 

We first recall some basic notions about the Malliavin calculus and
quasi-sure analysis. We refer the readers to \cite{Malliavin}, \cite{Nualart}
for a systematic discussion.

Let $\mathcal{H}$ be the Cameron-Martin space associated with the
corresponding Gaussian measure $\mathbb{P}$ on $W$. $\mathcal{H}$
is canonically defined to be the space of all paths in $W$ of the
form 
\[
h_{t}=\mathbb{E}[Zw_{t}],\ t\in[0,1],
\]
where $Z$ is an element of the $L^{2}$ space generated by the process,
and the inner product is given by $\langle h_{1},h_{2}\rangle=\mathbb{E}[Z_{1}Z_{2}].$
It follows that the identity map $\iota$ defines a continuous and
dense embedding from $\mathcal{H}$ into $W$ which makes $\left(W,\mathcal{H},\mathbb{P}\right)$
into an abstract Wiener space in the sense of Gross. Let $\iota^{*}:\ W^{*}\rightarrow\mathcal{H}^{*}\cong\mathcal{H}$
be the dual of $\iota.$ Then the identity map $\mathcal{I}:\ W^{\ast}\hookrightarrow L^{2}(W,\mathbb{P})$
uniquely extends to an isometric embedding from $\mathcal{H}$ into
$L^{2}(W,\mathbb{P})$ via $\iota^{*}$. 

If $f$ is a smooth Schwarz function on $\mathbb{R}^{n}$, and $\varphi_{1},\cdots,\varphi_{n}\in W^{*}$,
then $F=f(\varphi_{1},\cdots,\varphi_{n})$ is called a \textit{smooth
(Wiener) functional} on $W$. The collection of all smooth functionals
on $W$ is denoted by $\mathcal{S}$. The \textit{Malliavin derivative}
of $F$ is defined to be the $\mathcal{H}$-valued functional 
\[
DF=\sum_{i=1}^{n}\frac{\partial f}{\partial x^{i}}(\varphi_{1},\cdots,\varphi_{n})\iota^{*}\varphi_{i}\text{,}
\]
Such definition can be generalized to smooth functionals taking values
in a separable Hilbert space $E$. Let $\mathcal{S}(E)$ be the space
of $E$-valued functionals of the form $F=\sum_{i=1}^{k}F_{i}e_{i}$,
where $F_{i}\in\mathcal{S}$, $e_{i}\in E$. The Malliavin derivative
of $F$ is defined to be the $\mathcal{H}\otimes E$-valued functional
$DF=\sum_{i=1}^{k}DF_{i}\otimes e_{i}$. Such definition is independent
of the form of $F$, and by induction we can define higher order derivatives
$D^{N}F$ for $N\in\mathbb{N}$, which is then an $\mathcal{H}^{\otimes N}\otimes E$-valued
functional. Given $q\geqslant1,\ N\in\mathbb{N}$, the Sobolev norm
$\Vert\cdot\Vert_{q,N;E}$ on $\mathcal{S}(E)$ is defined by 
\[
\Vert F\Vert_{q,N;E}=\left(\sum_{i=0}^{N}\mathbb{E}\left[\left\Vert D^{i}F\right\Vert _{\mathcal{H}^{\otimes i}\otimes E}^{q}\right]\right)^{\frac{1}{q}}.
\]
The completion of $(\mathcal{S}(E),\Vert\cdot\Vert_{q,N;E})$ is called
the $(q,N)$-\textit{Sobolev space} for $E$-valued functionals over
$W$, and it is denoted by $\mathbb{D}_{N}^{q}(E)$.

For any $q>1,N\in\mathbb{N},$ the $(q,N)$-\textit{capacity} Cap$_{q,N}$
is a functional defined on the collection of all subsets of $W$.
If $O$ is an open subset of $W$, then 
\[
\text{Cap}_{q,N}(O):=\inf\left\{ \Vert u\Vert_{q,N}:\ u\in\mathbb{D}_{N}^{q},\ u\geqslant1\text{ on }O,\ u\geqslant0\ \text{on }W\text{,}\ \mathbb{P}\mbox{-a.s.}\right\} 
\]
and for any arbitrary subset $A$ of $W$, 
\[
\text{Cap}_{q,N}(A):=\inf\left\{ \text{Cap}_{q,N}(O):\ O\ \text{open, }A\subset O\right\} .
\]
A subset $A\subset W$ is called \textit{slim} if Cap$_{q,N}(A)=0$
for all $q>1$ and $N\in\mathbb{N}$. A property for paths in $W$
is said to hold for \textit{quasi-surely if} it holds outside a slim
set.

The $(q,N)$-capacity has the following basic properties:

(1) if $A\subset B$, then 
\[
0\leqslant\mbox{Cap}_{q,N}(A)\leqslant\mbox{Cap}_{q,N}(B);
\]

(2) Cap$_{q,N}$ is increasing in $q$ and $N$;

(3) Cap$_{q,N}$ is sub-additive, i.e.,
\[
\mbox{Cap}_{q,N}\left(\bigcup_{i=1}^{\infty}A_{i}\right)\leqslant\sum_{i=1}^{\infty}\mbox{Cap}_{q,N}(A_{i}).
\]

The following quasi-sure version of Tchebycheff's inequality and Borel-Cantelli's
lemma play an essential role in the study of quasi-sure convergence
in our approach. We refer the readers to \cite{Malliavin} for the
proof.
\begin{prop}
\label{Tchebycheff and Borel-Cantelli}(1) For any $\lambda>0$ and
any $u\in\mathbb{D}_{N}^{q}$ which is lower semi-continuous, we have
\[
\mathrm{Cap}_{q,N}\left\{ w\in W:u(w)>\lambda\right\} \leqslant\frac{C_{q,N}}{\lambda}\Vert u\Vert_{q,N},
\]
where $C_{q,N}$ is a constant depending only on $q$ and $N$.

(2) For any sequence $\{A_{n}\}_{n=1}^{\infty}$ of subsets of $W,$
if $\sum_{n=1}^{\infty}\mathrm{Cap}_{q,N}(A_{n})<\infty,$ then 
\[
\mathrm{Cap}_{q,N}\left(\limsup_{n\rightarrow\infty}A_{n}\right)=0.
\]

\end{prop}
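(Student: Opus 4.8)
The plan is to establish (1) as a capacitary analogue of Chebyshev's inequality by writing down an explicit admissible function in the variational definition of $\mathrm{Cap}_{q,N}$, and to deduce (2) using only the monotonicity and countable sub-additivity of the capacity recorded above.

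For part (1): since $u$ is lower semi-continuous, the set $O_{\lambda}:=\{w\in W:\ u(w)>\lambda\}$ is open, so $\mathrm{Cap}_{q,N}(O_{\lambda})$ is the infimum of $\Vert v\Vert_{q,N}$ over all $v\in\mathbb{D}_{N}^{q}$ with $v\geqslant1$ on $O_{\lambda}$ and $v\geqslant0$ $\mathbb{P}$-a.s. I would take $v:=\lambda^{-1}(u\vee0)$. On $O_{\lambda}$ we have $u>\lambda>0$, hence $v=u/\lambda>1$; and $v\geqslant0$ everywhere, so $v$ is admissible once we know $v\in\mathbb{D}_{N}^{q}$. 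The one genuinely non-trivial ingredient is that $\mathbb{D}_{N}^{q}$ is stable under composition with the Lipschitz map $x\mapsto x\vee0$ and that this does not increase the $(q,N)$-Sobolev norm up to a constant $C_{q,N}$ depending only on $q$ and $N$; this is a standard fact from the Malliavin calculus, resting on the chain rule together with the vanishing of $Du,D^{2}u,\ldots,D^{N}u$ $\mathbb{P}$-a.s. on the level set $\{u=0\}$, across which $x\mapsto x\vee0$ fails to be smooth. Granting this, $v\in\mathbb{D}_{N}^{q}$ with $\Vert v\Vert_{q,N}\leqslant C_{q,N}\Vert\lambda^{-1}u\Vert_{q,N}=C_{q,N}\lambda^{-1}\Vert u\Vert_{q,N}$, and therefore
\[
\mathrm{Cap}_{q,N}\{u>\lambda\}\leqslant\Vert v\Vert_{q,N}\leqslant\frac{C_{q,N}}{\lambda}\Vert u\Vert_{q,N},
\]
which is (1). (Alternatively one may invoke Meyer's identification $\mathbb{D}_{N}^{q}=(I-L)^{-N/2}L^{q}(W,\mathbb{P})$, $L$ being the Ornstein--Uhlenbeck operator, and the resulting description of $\mathrm{Cap}_{q,N}$ as a Bessel-type capacity: writing $u=(I-L)^{-N/2}f$ and using that $(I-L)^{-N/2}$ is positivity preserving, the function $(I-L)^{-N/2}(\lambda^{-1}|f|)\geqslant\lambda^{-1}|u|>1$ on $O_{\lambda}$ is admissible and has capacitary cost controlled by $\lambda^{-q}\Vert f\Vert_{L^{q}}^{q}\leqslant C_{q,N}\lambda^{-q}\Vert u\Vert_{q,N}^{q}$.)

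For part (2): by definition $\limsup_{n\to\infty}A_{n}=\bigcap_{m\geqslant1}\bigcup_{n\geqslant m}A_{n}$, so $\limsup_{n}A_{n}\subseteq\bigcup_{n\geqslant m}A_{n}$ for every $m$. Monotonicity (property (1)) and countable sub-additivity (property (3)) then give
\[
\mathrm{Cap}_{q,N}\Big(\limsup_{n\to\infty}A_{n}\Big)\leqslant\mathrm{Cap}_{q,N}\Big(\bigcup_{n\geqslant m}A_{n}\Big)\leqslant\sum_{n\geqslant m}\mathrm{Cap}_{q,N}(A_{n})
\]
for every $m\geqslant1$. Since $\sum_{n\geqslant1}\mathrm{Cap}_{q,N}(A_{n})<\infty$, the right-hand side is the tail of a convergent series and tends to $0$ as $m\to\infty$; as the left-hand side is independent of $m$, it must vanish, which is (2). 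The only step that is not a formal consequence of the definitions and the three listed properties of the capacity is the composition/norm estimate used in part (1) — the openness of $\{u>\lambda\}$, the admissibility of the competitor, and all of part (2) being immediate — so the detailed argument is essentially that for $(q,N)$-Sobolev functions in the Malliavin calculus, for which we refer to \cite{Malliavin}.
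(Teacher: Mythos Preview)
The paper does not supply its own proof of this proposition; it simply refers the reader to \cite{Malliavin}. So there is no ``paper's approach'' to compare against, and your task was really to produce a self-contained argument.

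Your treatment of (2) is correct and is exactly the standard argument from monotonicity and countable sub-additivity.

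For (1), however, your primary approach has a genuine gap. The claim that $\mathbb{D}_{N}^{q}$ is stable under $u\mapsto u\vee 0$ with a norm bound $\|u\vee 0\|_{q,N}\leqslant C_{q,N}\|u\|_{q,N}$ is true for $N=1$ (where it follows from the local property $Du=0$ a.s.\ on $\{u=0\}$) but fails for $N\geqslant 2$. Already on the one-dimensional Gaussian space, $u(x)=x$ lies in every $\mathbb{D}_{N}^{q}$, yet $u^{+}(x)=x^{+}$ has distributional second derivative $\delta_{0}$ and so does not belong to $\mathbb{D}_{2}^{q}$. The assertion that $D^{2}u,\ldots,D^{N}u$ vanish a.s.\ on $\{u=0\}$ is not a theorem; only the first derivative enjoys this localisation. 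Thus the competitor $v=\lambda^{-1}(u\vee 0)$ need not be admissible.

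Your parenthetical alternative via Meyer's equivalence is the correct route and is essentially the argument in \cite{Malliavin}: write $u=(I-L)^{-N/2}f$ with $\|f\|_{L^{q}}\asymp\|u\|_{q,N}$, use that $(I-L)^{-N/2}$ has a positive kernel so that $|u|\leqslant (I-L)^{-N/2}|f|$ pointwise, and take $v:=\lambda^{-1}(I-L)^{-N/2}|f|$ as the admissible competitor. You should promote this from a parenthetical remark to the main argument and drop the $u\vee 0$ approach, or at least restrict the latter explicitly to $N=1$.
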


Now we are in a position to state our main result of this section.
\begin{thm}
\label{quasi-sure convergence}Suppose that $\mathbb{P}$ is the Gaussian
measure on $(W,\mathcal{B}(W))$ associated with a $d$-dimensional
Gaussian process with $h$-long-time memory for some $h\in(1/4,1/2],\ p\in(2,4)$
with $hp>1.$ Then $\mathcal{A}_{p}^{c}$ is a slim set. In particular,
sample paths of the Gaussian processes can be lifted as geometric
$p$-rough paths in a canonical way quasi-surely, as the limit of
the lifting of dyadic piecewise linear interpolation under $d_{p}.$
\end{thm}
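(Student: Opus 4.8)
The strategy is to quantify the Coutin--Qian argument in the Malliavin--Sobolev scale rather than merely in $L^1(W,\mathbb{P})$. Concretely, for each level $i \in \{1,2,3\}$ and each dyadic increment, the relevant object $w^{(m),i}_{s,t}$ is a polynomial (of degree $i$) in the Gaussian increments of $w$, hence lies in every Sobolev space $\mathbb{D}^q_N$; moreover, since it sits in a fixed (finite, $i$-th) Wiener chaos, all its Sobolev norms are comparable to its $L^2$-norm by equivalence of norms on a fixed chaos (hypercontractivity / Meyer's inequalities). Thus the first step is to record, for fixed $q>1$ and $N\in\mathbb{N}$, bounds of the form
\[
\left\Vert \, d_p\!\left(\boldsymbol{w}^{(m)},\boldsymbol{w}^{(m+1)}\right) \right\Vert_{q,N} \leqslant C_{q,N}\,\theta^{m}
\]
for some $\theta=\theta(h,p)<1$, replaying the dyadic estimates of Coutin--Qian but now for the Sobolev norm in place of the $L^2$ norm. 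This should follow because (a) the key a priori quantity controlling $d_p$ between two dyadic lifts is, by the standard rough-path argument (Garsia--Rodemich--Rumsey type / summation over dyadic levels), dominated by $\sup_i \sup_D \big(\sum_l |\cdot|^{p/i}\big)^{i/p}$, and (b) one can take $q$ large (since $\mathrm{Cap}_{q,N}$ is increasing in $q$, it suffices to handle large $q$) and then each $\ell^{p/i}$-type sum over the dyadic partition is estimated by a moment bound on each increment, with the moments in a fixed chaos all equivalent to second moments, which are exactly the quantities Coutin--Qian controlled using the $h$-long-time-memory hypothesis of Definition \ref{long time memory}.

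The second step is purely a soft consequence: given $\sum_m \big\Vert d_p(\boldsymbol{w}^{(m)},\boldsymbol{w}^{(m+1)})\big\Vert_{q,N} < \infty$, apply the quasi-sure Tchebycheff inequality (Proposition \ref{Tchebycheff and Borel-Cantelli}(1)) to $u_m := d_p(\boldsymbol{w}^{(m)},\boldsymbol{w}^{(m+1)})$ — which is lower semicontinuous in $w$ since $d_p$ is a supremum over finite partitions of continuous functionals — to get $\mathrm{Cap}_{q,N}\{u_m > 2^{-m/2}\} \leqslant C_{q,N} 2^{m/2}\,\Vert u_m\Vert_{q,N}$, which is summable if $\theta < 2^{-1/2}$ (and in general one just picks the threshold sequence adapted to $\theta$). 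Then Proposition \ref{Tchebycheff and Borel-Cantelli}(2) gives that, outside a set of $(q,N)$-capacity zero, $u_m \leqslant \varepsilon_m$ eventually with $\sum_m \varepsilon_m < \infty$, so $\{\boldsymbol{w}^{(m)}\}$ is $d_p$-Cauchy there; that is, $\mathcal{A}_p^c$ has $(q,N)$-capacity zero. Since $q>1$ and $N$ were arbitrary, $\mathcal{A}_p^c$ is slim, which is exactly the assertion; the canonical lift is then the $d_p$-limit, and it is quasi-surely well defined and independent of the dyadic scheme by the same estimate applied to differences of interpolation schemes (or simply because the limit exists off a slim set).

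The main obstacle is Step 1: one must genuinely recheck that the Coutin--Qian dyadic estimates go through in the Sobolev norm uniformly over the partition $D$ entering the definition of $d_p$, in particular that the second- and third-level terms (iterated integrals of the piecewise-linear interpolation) have Sobolev norms controlled by the same covariance quantities $\mathbb{E}|B_t-B_s|^2$ and the long-time-memory bound, with constants independent of $m$ and of the partition. The chaos structure is what makes this tractable — each $w^{(m),i}_{s,t}$ lies in $\bigoplus_{j\le i}$(chaos of order $j$), so Meyer's inequalities reduce every $\Vert\cdot\Vert_{q,N}$ to an $L^2$ estimate up to a constant $C_{q,N}$ — but the bookkeeping of the cross terms and the extraction of the geometric factor $\theta^m$ from the hypotheses (this is where $hp>1$ is used, to make the dyadic sum converge) is the technical heart of the proof and must be done carefully; everything after that is a mechanical application of the quasi-sure Borel--Cantelli lemma.
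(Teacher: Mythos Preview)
There is a genuine gap in Step 1, and the paper flags it explicitly: it is \emph{not known} that $d_p$ is Malliavin differentiable. Your plan hinges on applying Proposition~\ref{Tchebycheff and Borel-Cantelli}(1) to $u_m:=d_p(\boldsymbol{w}^{(m)},\boldsymbol{w}^{(m+1)})$, which requires $u_m\in\mathbb{D}^q_N$, not merely $u_m\in L^q$ and lower semicontinuous. Your justification --- that each $w^{(m),i}_{s,t}$ lies in a fixed Wiener chaos so hypercontractivity/Meyer reduce $\|\cdot\|_{q,N}$ to $\|\cdot\|_2$ --- is correct for each \emph{individual} increment but does not survive the supremum over all finite partitions that defines $d_p$. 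A supremum of chaos-$i$ polynomials is generally neither polynomial nor demonstrably in $\mathbb{D}^q_N$, so the inequality $\|d_p(\boldsymbol{w}^{(m)},\boldsymbol{w}^{(m+1)})\|_{q,N}\leqslant C_{q,N}\theta^m$ is not something you can extract from chaos equivalence of norms.

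The paper's way around this is precisely the missing idea: replace $d_p$ by the explicit dyadic majorants $\rho_i$ of (\ref{the rho function}) via Lemma~\ref{control of p-var metric as lemma}, and then --- crucially --- do \emph{not} apply Tchebycheff to $\rho_i$ either, but use subadditivity of capacity to decompose the tail event $\{\rho_i(\boldsymbol{w}^{(m+1)},\boldsymbol{w}^{(m)})>\lambda\}$ into a countable union over dyadic levels $n$ and sub-intervals $k$ (see (\ref{evenly distributed})). Only at that stage does one face the genuinely polynomial functional $|w^{(m+1),i}_{t_n^{k-1},t_n^k}-w^{(m),i}_{t_n^{k-1},t_n^k}|^{2\tilde N}$, to which Tchebycheff and Lemma~\ref{polynomial estimates} legitimately apply, and the Coutin--Qian $L^2$ bounds of Lemma~\ref{L^2 estimates} close the estimate. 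Your Step 2 (Borel--Cantelli) is then exactly how the paper finishes, but Step 1 as written does not go through without this decomposition.
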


By applying Lyons' universal limit theorem (Theorem \ref{Universal Limit Theorem})
for rough differential equations driven by geometric rough paths,
an immediate consequence of Theorem \ref{quasi-sure convergence}
is the quasi-sure existence and uniqueness for pathwise solutions
to stochastic differential equations driven by Gaussian processes
with $h$-long-time memory in the sense of geometric rough paths,
under certain regularity conditions on the generating vector fields. 

The main idea of proving Theorem \ref{quasi-sure convergence} is
to use a crucial control on the $p$-variation metric which is defined
over dyadic partitions only, and to apply basic results for Gaussian
polynomials in the Malliavin calculus.

If $\boldsymbol{w}=(1,w^{1},w^{2},w^{3})$ and $\boldsymbol{\tilde{w}}=(1,\tilde{w}^{1},\tilde{w}^{2},\tilde{w}^{3})$
are two functionals on $\Delta$ taking values in $T^{3}(\mathbb{R}^{d})$,
define 
\begin{equation}
\rho_{i}(\boldsymbol{w},\boldsymbol{\tilde{w}})=\left(\sum_{n=1}^{\infty}n^{\gamma}\sum_{k=1}^{2^{n}}\left\vert w_{t_{n}^{k-1},t_{n}^{k}}^{i}-\tilde{w}_{t_{n}^{k-1},t_{n}^{k}}^{i}\right\vert ^{\frac{p}{i}}\right)^{\frac{i}{p}},\label{the rho function}
\end{equation}
where $i=1,2,3$ and $\gamma>p-1$ is a fixed constant . We  use $\rho_{j}(\boldsymbol{w})$
to denote $\rho_{j}(\boldsymbol{w},\boldsymbol{\tilde{w}})$ with
$\boldsymbol{\tilde{w}}=(1,0,0,0)$. These functionals were originally
introduced by Hambly and Lyons \cite{HL} for constructing the stochastic
area processes associated with Brownian motions on the Sierpinski
gasket. They were then used by Ledoux, Qian and Zhang \cite{LQZ}
to establish a large deviation principle for Brownian rough paths
under the $p$-variation topology. We  also use these functionals
to prove a large deviation principle for capacity in the next section. 

The following estimate is contained implicitly in \cite{HL} and made
explicit in \cite{LQ}.
\begin{lem}
\label{control of p-var metric as lemma}There exists a positive constant
$C_{d,p,\gamma}$ depending only on $d,p,\gamma,$ such that for any
$\boldsymbol{w},\widetilde{\boldsymbol{w}},$ 
\begin{eqnarray}
d_{p}(\boldsymbol{w},\widetilde{\boldsymbol{w}}) & \leqslant & C_{d,p,\gamma}\max\left\{ \rho_{1}(\boldsymbol{w},\widetilde{\boldsymbol{w}}),\rho_{2}(\boldsymbol{w},\widetilde{\boldsymbol{w}}),\rho_{1}(\boldsymbol{w},\widetilde{\boldsymbol{w}})\left(\rho_{1}(\boldsymbol{w})+\rho_{1}(\widetilde{\boldsymbol{w}})\right),\right.\nonumber \\
 &  & \rho_{3}(\boldsymbol{w},\widetilde{\boldsymbol{w}}),\rho_{2}(\boldsymbol{w},\widetilde{\boldsymbol{w}})\left(\rho_{1}(\boldsymbol{w})+\rho_{1}(\widetilde{\boldsymbol{w}})\right),\nonumber \\
 &  & \left.\rho_{1}(\boldsymbol{w},\widetilde{\boldsymbol{w}})\left(\rho_{2}(\boldsymbol{w})+\rho_{2}(\widetilde{\boldsymbol{w}})+(\rho_{1}(\boldsymbol{w})+\rho_{1}(\widetilde{\boldsymbol{w}}))^{2}\right)\right\} .\label{control of p-var metric as equality}
\end{eqnarray}

\end{lem}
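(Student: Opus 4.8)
The plan is to estimate $d_p(\boldsymbol{w},\widetilde{\boldsymbol{w}})$ level by level, for $i=1,2,3$, by reducing an arbitrary finite partition of $[0,1]$ to dyadic partitions and then invoking the definition \eqref{the rho function} of the functionals $\rho_i$. The starting point is a standard fact about $p$-variation over dyadic points: for a control-type quantity, the supremum over all finite partitions is comparable, up to a constant depending only on $p$, to a weighted sum over nested dyadic intervals. Concretely, given any partition $D=\{0=s_0<s_1<\dots<s_r=1\}$ and a path increment $u^i_{s_{l-1},s_l}$, one writes each interval $[s_{l-1},s_l]$ as a countable disjoint union of dyadic intervals of the form $[t_n^{k-1},t_n^k]$, uses Chen's identity \eqref{Chen's identity} to split the level-$i$ increment over these dyadic pieces (which, for $i\ge 2$, produces cross terms involving lower-level increments), and then applies the triangle inequality in $\ell^{p/i}$ together with the superadditivity of $x\mapsto x^{p/i}$. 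The factor $n^\gamma$ with $\gamma>p-1$ is exactly what makes the resulting double sum over $(n,k)$ summable after a Hölder/Jensen step: one absorbs a geometric-type series $\sum_n n^{-\gamma/(p-1)}$ or similar, and this is where the hypothesis $\gamma>p-1$ is used.

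First I would treat level one. Here there are no cross terms: for any partition, $w^1_{s_{l-1},s_l}-\tilde w^1_{s_{l-1},s_l}$ is a sum of dyadic increments, so by the triangle inequality and the elementary inequality $(\sum a_k)^{p}\le (\sum a_k^{p}\, n^{\gamma})(\sum n^{-\gamma/(p-1)})^{p-1}$ applied across the dyadic scales, one gets
\[
\sup_D\Big(\sum_l|w^1_{s_{l-1},s_l}-\tilde w^1_{s_{l-1},s_l}|^{p}\Big)^{1/p}\le C_{p,\gamma}\,\rho_1(\boldsymbol{w},\widetilde{\boldsymbol{w}}),
\]
which is the first term on the right of \eqref{control of p-var metric as equality}. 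Next, for level two, Chen's identity gives, when $[s_{l-1},s_l]=\bigsqcup_j I_j$ with $I_j$ dyadic,
\[
w^2_{s_{l-1},s_l}-\tilde w^2_{s_{l-1},s_l}=\sum_j\big(w^2_{I_j}-\tilde w^2_{I_j}\big)+\sum_{j<j'}\big(w^1_{I_j}\otimes w^1_{I_{j'}}-\tilde w^1_{I_j}\otimes\tilde w^1_{I_{j'}}\big),
\]
and the cross terms are handled by writing $w^1\otimes w^1-\tilde w^1\otimes\tilde w^1=(w^1-\tilde w^1)\otimes w^1+\tilde w^1\otimes(w^1-\tilde w^1)$, so each cross term is bounded by a product of a level-one difference and a level-one increment. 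Summing in $\ell^{p/2}$ and using the level-one estimate for both factors produces the terms $\rho_2(\boldsymbol{w},\widetilde{\boldsymbol{w}})$ and $\rho_1(\boldsymbol{w},\widetilde{\boldsymbol{w}})(\rho_1(\boldsymbol{w})+\rho_1(\widetilde{\boldsymbol{w}}))$.

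Level three is the main obstacle, since Chen's identity now generates several families of cross terms: products of a level-two quantity with a level-one quantity, and products of three level-one quantities. The bookkeeping is delicate because one must decompose $w^3_{I_j}-\tilde w^3_{I_j}$ plus mixed sums like $\sum_{j<j'} w^1_{I_j}\otimes w^2_{I_{j'}}$ and $\sum_{j<j'<j''} w^1_{I_j}\otimes w^1_{I_{j'}}\otimes w^1_{I_{j''}}$, and bound each after telescoping the differences as before. Each factor is then controlled by the level-one and level-two estimates already established, giving rise to $\rho_3(\boldsymbol{w},\widetilde{\boldsymbol{w}})$, $\rho_2(\boldsymbol{w},\widetilde{\boldsymbol{w}})(\rho_1(\boldsymbol{w})+\rho_1(\widetilde{\boldsymbol{w}}))$, and $\rho_1(\boldsymbol{w},\widetilde{\boldsymbol{w}})(\rho_2(\boldsymbol{w})+\rho_2(\widetilde{\boldsymbol{w}})+(\rho_1(\boldsymbol{w})+\rho_1(\widetilde{\boldsymbol{w}}))^2)$ as in the last line of \eqref{control of p-var metric as equality}. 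Collecting the finitely many resulting bounds and taking the maximum, with $C_{d,p,\gamma}$ absorbing the combinatorial constants and the dimension-dependent constants from the tensor-norm cross-term estimates, yields the claim. The dyadic-reduction lemma (supremum over all partitions controlled by weighted dyadic sums) is the one genuinely nontrivial ingredient, and since the lemma is attributed to \cite{HL,LQ}, I would cite it for that step rather than reprove it, and concentrate the work on the Chen's-identity expansions at levels two and three.
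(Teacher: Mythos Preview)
The paper does not actually prove this lemma: it simply states that the estimate ``is contained implicitly in \cite{HL} and made explicit in \cite{LQ}'' and moves on. Your proposal is therefore not competing with a proof in the paper but rather reconstructing the argument from those references, and your outline---dyadic decomposition of an arbitrary partition interval, Chen's identity to expand the higher-level increments into diagonal and cross terms, telescoping the differences $w\otimes w-\tilde w\otimes\tilde w$, and a H\"older step using $\gamma>p-1$ to sum over scales---is indeed the strategy of \cite{HL} and Chapter~4 of \cite{LQ}. So your approach is correct and matches the cited sources; for the purposes of this paper, citing \cite{HL,LQ} as you suggest at the end is exactly what the authors do.
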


The main difficulty of proving Theorem \ref{quasi-sure convergence}
is that it is unknown if the $p$-variation metric is a differentiable
in the sense of Malliavin. We  get around this difficulty first by
controlling the $p$-variation metric using Lemma \ref{control of p-var metric as lemma}
and then by observing that the capacity of $\left\{ \rho_{i}\left(\boldsymbol{w}^{(m+1)},\boldsymbol{w}^{(m)}\right)>\lambda\right\} $
is ``evenly distributed'' over the dyadic sub-intervals (see (\ref{evenly distributed})
in the following). Our task is then reduced to the estimation of the
Sobolev norms of certain Gaussian polynomials, which is contained
in the following basic result in the Malliavin calculus (see \cite{Nualart}). 
\begin{lem}
\label{polynomial estimates}Fix $N\in\mathbb{N}$. Let $\mathcal{P}^{N}(E)$
be the space of $E$-valued polynomial functionals of degree less
than or equal to $N$. Then for any $q>2,$ we have 
\begin{equation}
\|F\|_{q;E}\leqslant(N+1)(q-1)^{\frac{N}{2}}\|F\|_{2;E}.\label{equivalence of q-norm and 2-norm}
\end{equation}
Moreover, for any $F\in\mathcal{P}^{N}(E)$ and $i\leqslant N$ we
have 
\begin{equation}
\Vert D^{i}F\Vert_{2;\mathcal{H}^{\otimes i}\otimes E}\leqslant N^{\frac{i+1}{2}}\Vert F\Vert_{2;E}.\label{differential inequality}
\end{equation}

\end{lem}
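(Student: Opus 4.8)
The plan is to obtain both estimates from the Wiener--It\^{o} chaos decomposition together with Nelson's hypercontractivity theorem, these being the standard tools behind such bounds (see \cite{Nualart}). Write
\[
L^{2}(W,\mathbb{P};E)=\bigoplus_{n=0}^{\infty}\mathcal{H}_{n}(E)
\]
for the orthogonal decomposition into $E$-valued Wiener chaoses of each order, and let $J_{n}$ denote the projection onto $\mathcal{H}_{n}(E)$. Expanding in Hermite polynomials shows that every element of $\mathcal{P}^{N}(E)$ lies in $\bigoplus_{n=0}^{N}\mathcal{H}_{n}(E)$, so that $F=\sum_{n=0}^{N}J_{n}F$ for $F\in\mathcal{P}^{N}(E)$; this is the one ``soft'' point to check and it is immediate from the recursion for Hermite polynomials.

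For (\ref{equivalence of q-norm and 2-norm}), recall that the Ornstein--Uhlenbeck semigroup acts on the chaos expansion by $T_{t}=\sum_{n\geqslant0}e^{-nt}J_{n}$, and that Nelson's hypercontractivity theorem --- which holds for Hilbert-space-valued functionals as well, for instance by tensoring the scalar statement with an orthonormal basis of $E$ --- gives $\|T_{t}G\|_{q;E}\leqslant\|G\|_{2;E}$ whenever $e^{2t}\geqslant q-1$. First I would apply this with $t=\tfrac{1}{2}\log(q-1)>0$ (permissible since $q>2$) to $G=J_{n}F$, using $T_{t}(J_{n}F)=e^{-nt}J_{n}F$, to obtain
\[
\|J_{n}F\|_{q;E}\leqslant(q-1)^{n/2}\|J_{n}F\|_{2;E}\leqslant(q-1)^{N/2}\|J_{n}F\|_{2;E},\qquad 0\leqslant n\leqslant N.
\]
Summing over $n$ by the triangle inequality and using $\|J_{n}F\|_{2;E}\leqslant\|F\|_{2;E}$ (orthogonality of the chaoses) then produces the factor $N+1$ and yields (\ref{equivalence of q-norm and 2-norm}). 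The reason for going through hypercontractivity rather than the cruder equivalence of $L^{q}$-norms on a fixed chaos degree is precisely to keep the sharp $(q-1)^{N/2}$-dependence, which is what will be needed when $q$ is sent to infinity in the capacity estimates later.

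For (\ref{differential inequality}), the structural facts used are that the Malliavin derivative lowers chaos order by one, $D\bigl(\mathcal{H}_{n}(E)\bigr)\subset\mathcal{H}\otimes\mathcal{H}_{n-1}(E)$, and that on the $n$-th chaos $\mathbb{E}\bigl[\|DG\|_{\mathcal{H}\otimes E}^{2}\bigr]=n\,\mathbb{E}\bigl[\|G\|_{E}^{2}\bigr]$, which follows from the fact that $\delta D$ (with $\delta$ the adjoint of $D$) acts as multiplication by $n$ on $\mathcal{H}_{n}(E)$. Iterating gives, for $G\in\mathcal{H}_{n}(E)$ and $i\leqslant n$,
\[
\mathbb{E}\bigl[\|D^{i}G\|_{\mathcal{H}^{\otimes i}\otimes E}^{2}\bigr]=n(n-1)\cdots(n-i+1)\,\mathbb{E}\bigl[\|G\|_{E}^{2}\bigr]\leqslant N^{i}\,\mathbb{E}\bigl[\|G\|_{E}^{2}\bigr],
\]
while $D^{i}G=0$ when $i>n$. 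Since $D^{i}(J_{n}F)$ sits over the $(n-i)$-th chaos, the functionals $D^{i}(J_{n}F)$ for $n=i,\dots,N$ are mutually orthogonal in $L^{2}(W,\mathbb{P};\mathcal{H}^{\otimes i}\otimes E)$, hence
\[
\|D^{i}F\|_{2;\mathcal{H}^{\otimes i}\otimes E}^{2}=\sum_{n=i}^{N}\|D^{i}(J_{n}F)\|_{2;\mathcal{H}^{\otimes i}\otimes E}^{2}\leqslant N^{i}\sum_{n=i}^{N}\|J_{n}F\|_{2;E}^{2}\leqslant N^{i}\|F\|_{2;E}^{2},
\]
which in fact slightly improves the stated constant; the weaker bound (\ref{differential inequality}) as written is then more than enough. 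In short there is no real obstacle: the argument is a bookkeeping exercise with the orthogonal chaos decomposition, the only genuinely analytic ingredient being Nelson's hypercontractivity, invoked in its standard vector-valued form.
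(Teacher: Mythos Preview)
Your argument is correct and is precisely the standard route one finds in \cite{Nualart}: chaos decomposition plus Nelson hypercontractivity for (\ref{equivalence of q-norm and 2-norm}), and the identity $\mathbb{E}\|D G\|^{2}=n\,\mathbb{E}\|G\|^{2}$ on the $n$-th chaos iterated and combined with orthogonality for (\ref{differential inequality}). Note that the paper does not actually supply a proof of this lemma at all --- it simply cites \cite{Nualart} --- so there is nothing to compare against beyond observing that you have reproduced the intended reference; your sharper constant $N^{i/2}$ in the second estimate is indeed what the chaos computation gives, and the paper's $N^{(i+1)/2}$ is just a harmless over-estimate.
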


The following $L^{2}$-estimates for the dyadic piecewise linear interpolation,
which are contained in a series of calculations in \cite{LQ}, are
crucial for us. 
\begin{lem}
\label{L^2 estimates}Let $m,n\geqslant1$ and $k=1,\cdots,2^{n}$. 

1) For $i=1,2,3$, we have 
\[
\left\Vert w_{t_{n}^{k-1},t_{n}^{k}}^{(m),i}\right\Vert _{2;(\mathbb{R}^{d})^{\otimes i}}\leqslant\begin{cases}
C_{d,h}\left(\frac{1}{2^{nh}}\right)^{i}, & n\leqslant m,\\
C_{d,h}\left(\frac{2^{m(1-h)}}{2^{n}}\right)^{i}, & n>m.
\end{cases}
\]

2) We also have 
\begin{eqnarray*}
\left\Vert w_{t_{n}^{k-1},t_{n}^{k}}^{(m+1),1}-w_{t_{n}^{k-1},t_{n}^{k}}^{(m),1}\right\Vert _{2;\mathbb{R}^{d}} & \leqslant & \begin{cases}
0, & n\leqslant m,\\
C_{d,h}\frac{2^{m(1-h)}}{2^{n}}, & n>m;
\end{cases}\\
\left\Vert w_{t_{n}^{k-1},t_{n}^{k}}^{(m+1),2}-w_{t_{n}^{k-1},t_{n}^{k}}^{(m),2}\right\Vert _{2;(\mathbb{R}^{d})^{\otimes2}} & \leqslant & \begin{cases}
C_{d,h}\frac{1}{2^{\frac{1}{2}(4h-1)m}2^{\frac{1}{2}n}}, & n\leqslant m,\\
C_{d,h}\frac{2^{2m(1-h)}}{2^{2n}}, & n>m;
\end{cases}\\
\left\Vert w_{t_{n}^{k-1},t_{n}^{k}}^{(m+1),3}-w_{t_{n}^{k-1},t_{n}^{k}}^{(m),3}\right\Vert _{2;(\mathbb{R}^{d})^{\otimes3}} & \leqslant & \begin{cases}
C_{d,h}\frac{1}{2^{\frac{1}{2}(4h-1)m}2^{\frac{1+2h}{2}n}}, & n\leqslant m,\\
C_{d,h}\frac{2^{3m(1-h)}}{2^{3n}}, & n>m.
\end{cases}
\end{eqnarray*}
Here $C_{d,h}$ is a constant depending only on $d$ and $h.$
\end{lem}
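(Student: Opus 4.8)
The plan is to reduce every quantity in the lemma to level-one increments of $w$, combining Chen's identity~(\ref{Chen's identity}) with two elementary features of the dyadic interpolation --- $w^{(m)}$ is linear on each $m$-th dyadic interval and coincides with $w$ at every dyadic point of level at most $m$ --- and then to estimate second moments of the resulting Gaussian polynomials using \emph{both} inequalities in Definition~\ref{long time memory}. Throughout, $\lesssim$ hides a constant depending only on $d$ and $h$; since each $w_{s,t}^{(m),i}$ is a polynomial functional of degree $i\leqslant 3$ in the Gaussian family $\{w_{t_{m}^{k}}^{\alpha}\}$, Lemma~\ref{polynomial estimates} will let me replace an $L^{q}$ norm by the $L^{2}$ norm at the cost of a constant depending only on $q$.

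I would begin with $i=1$. If $n\leqslant m$ then $t_{n}^{k-1},t_{n}^{k}$ are dyadic of level $\leqslant m$, so $w_{t_{n}^{k-1},t_{n}^{k}}^{(m),1}=w_{t_{n}^{k}}-w_{t_{n}^{k-1}}$, which the first inequality in Definition~\ref{long time memory} bounds by $\lesssim 2^{-nh}$; the same identity makes the difference $w_{t_{n}^{k-1},t_{n}^{k}}^{(m+1),1}-w_{t_{n}^{k-1},t_{n}^{k}}^{(m),1}$ vanish. If $n>m$ then $[t_{n}^{k-1},t_{n}^{k}]$ lies in a single $m$-th dyadic interval $[t_{m}^{j-1},t_{m}^{j}]$, on which $w^{(m)}$ has constant velocity $2^{m}(w_{t_{m}^{j}}-w_{t_{m}^{j-1}})$, so $w_{t_{n}^{k-1},t_{n}^{k}}^{(m),1}=2^{m-n}(w_{t_{m}^{j}}-w_{t_{m}^{j-1}})$, of $L^{2}$ norm $\lesssim 2^{m(1-h)}/2^{n}$; the matching difference bound follows by the triangle inequality from these estimates at levels $m$ and $m+1$. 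For $i=2,3$ in the regime $n>m$ the interval again lies in a single linear segment of both $w^{(m)}$ and $w^{(m+1)}$, so $w_{t_{n}^{k-1},t_{n}^{k}}^{(\cdot),i}=\tfrac{1}{i!}\bigl(w_{t_{n}^{k-1},t_{n}^{k}}^{(\cdot),1}\bigr)^{\otimes i}$; the stated bounds are then immediate from the level-one estimate and the norm equivalence, and for the differences one expands $a^{\otimes i}-b^{\otimes i}$ as a telescoping sum of tensor products of $a-b$ with copies of $a$ and $b$ and applies H\"older's inequality together with the level-one difference bound.

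The substantial case is $n\leqslant m$ at levels two and three. By Chen's identity $\boldsymbol{w}_{t_{n}^{k-1},t_{n}^{k}}^{(m)}$ is the tensor product over the $2^{m-n}$ segments of their signatures $\bigl(1,\eta_{r},\tfrac{1}{2}\eta_{r}^{\otimes2},\tfrac{1}{6}\eta_{r}^{\otimes3}\bigr)$, where $\eta_{r}$ is the $r$-th $m$-th dyadic increment of $w$; expanding this product, the level-two term equals $\tfrac{1}{2}(w_{t_{n}^{k}}-w_{t_{n}^{k-1}})^{\otimes2}$ plus the antisymmetric ``area'' $A^{(m)}=\tfrac{1}{2}\sum_{r<s}(\eta_{r}\otimes\eta_{s}-\eta_{s}\otimes\eta_{r})$, and the level-three term is the sum of $\tfrac{1}{6}(w_{t_{n}^{k}}-w_{t_{n}^{k-1}})^{\otimes3}$, a symmetrized product of $w_{t_{n}^{k}}-w_{t_{n}^{k-1}}$ with $A^{(m)}$, and a genuinely antisymmetric ``volume'' term $V^{(m)}$. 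The pure powers of $w_{t_{n}^{k}}-w_{t_{n}^{k-1}}$ cancel in the differences $w_{t_{n}^{k-1},t_{n}^{k}}^{(m+1),i}-w_{t_{n}^{k-1},t_{n}^{k}}^{(m),i}$, which therefore reduce, for $i=2$, to $A^{(m+1)}-A^{(m)}=\tfrac{1}{2}\sum_{r}(\delta_{2r-1}\otimes\delta_{2r}-\delta_{2r}\otimes\delta_{2r-1})$, where $\delta_{2r-1},\delta_{2r}$ are the two $(m+1)$-th dyadic increments in the $r$-th $m$-th block, and, for $i=3$, to $V^{(m+1)}-V^{(m)}$ together with a symmetrized product of $w_{t_{n}^{k}}-w_{t_{n}^{k-1}}$ with $A^{(m+1)}-A^{(m)}$ (which H\"older's inequality controls by the level-two difference bound). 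I would compute the second moments of the antisymmetric sums by Wick's theorem and the independence of the components of $B$: the decisive point is that the antisymmetrization cancels, among the off-diagonal terms, exactly those contractions that pair the two increments of a single block with each other --- the only ones with no decay in $|r-r'|$ --- so that every surviving term carries a covariance between increments over disjoint subintervals a distance $\asymp |r-r'|2^{-m}$ apart, to which the second inequality in Definition~\ref{long time memory} applies (its hypothesis $(t-s)/\tau\leqslant1$ holds since $(t-s)/\tau\asymp|r-r'|^{-1}$), giving a factor $\lesssim 2^{-2mh}(1+|r-r'|)^{2h-2}$. Because $h\leqslant 1/2<3/4$, the resulting index sums --- of the form $\sum_{k}(1+k)^{4h-4}$ at level two, with products of three such factors at level three --- converge, and summing over the $2^{m-n}$ blocks reproduces the asserted difference bounds $\lesssim 2^{-(4h-1)m/2}2^{-n/2}$ and $\lesssim 2^{-(4h-1)m/2}2^{-(1+2h)n/2}$. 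Finally I would recover the bounds in 1) for $n\leqslant m$ by telescoping: writing $w_{t_{n}^{k-1},t_{n}^{k}}^{(m),i}=\tfrac{1}{i!}(w_{t_{n}^{k}}-w_{t_{n}^{k-1}})^{\otimes i}+\sum_{l=n}^{m-1}\bigl(w_{t_{n}^{k-1},t_{n}^{k}}^{(l+1),i}-w_{t_{n}^{k-1},t_{n}^{k}}^{(l),i}\bigr)$, the geometric series $\sum_{l\geqslant n}2^{-(4h-1)l/2}$ converges (here $h>1/4$ enters) and is $\asymp 2^{-(4h-1)n/2}$, which combined with the prefactors $2^{-n/2}$ and $2^{-(1+2h)n/2}$ gives exactly $(2^{-nh})^{2}$ and $(2^{-nh})^{3}$.

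I expect the genuine obstacle to be the Wick-formula estimate of $\Vert V^{(m+1)}-V^{(m)}\Vert_{2}$ at level three when $n\leqslant m$: there one must marshal a fairly large collection of terms, verify that the antisymmetry kills precisely the contractions with no spatial decay, and check that every remaining term carries enough decorrelation factors for the triple index sums to converge and reproduce the stated exponents. The difficulty is one of bookkeeping rather than of ideas, and it is here that the precise form of the second inequality in Definition~\ref{long time memory} is indispensable.
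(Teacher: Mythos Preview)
The paper does not prove this lemma itself; it records the bounds and refers to the calculations in \cite{LQ} (which follow \cite{CQ}). Your proposal is a correct reconstruction of essentially that argument: the cases $n>m$ and $i=1$ are exactly as you describe, and for $n\leqslant m$ at levels two and three one indeed expands via Chen's identity, isolates the antisymmetric area and volume contributions that survive in the differences, and controls their second moments by Wick's theorem together with the decorrelation estimate in Definition~\ref{long time memory}; the telescoping recovery of part 1) from part 2) is also standard.

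One small remark: at level two your cancellation explanation is slightly over-stated. Once antisymmetry kills the diagonal tensor components $\alpha=\beta$, the independence of the coordinate processes already forces every Wick pairing in the $(r\neq r')$ terms of $\Vert A^{(m+1)}-A^{(m)}\Vert_{2}^{2}$ to be cross-block (each factor in the product carries exactly one $\alpha$- and one $\beta$-increment), so no further cancellation is required there; the argument is simpler than you indicate. At level three the bookkeeping is genuinely heavier, as you anticipate, and the presentation in \cite{LQ} organises it through a direct Chen-identity expansion --- writing $w^{(m+1),3}-w^{(m),3}$ over $[t_{n}^{k-1},t_{n}^{k}]$ as a sum of local level-two and level-three corrections on each $m$-th subinterval tensored with first-level increments of the remaining subintervals --- rather than the log-signature (symmetric / area / Lie-volume) decomposition you propose. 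The two organisations are equivalent and produce the same Wick sums and exponents.
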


Now we can proceed to the proof of Theorem \ref{quasi-sure convergence}.
The key step is to establish estimates for the capacities of the tail
events $\left\{ w:\ \rho_{i}\left(\boldsymbol{w}^{(m+1)},\boldsymbol{w}^{(m)}\right)>\lambda\right\} $
and $\left\{ w:\ \rho_{i}\left(\boldsymbol{w}^{(m)}\right)>\lambda\right\} $
($i=1,2,3$). This is contained in the following lemma. 
\begin{lem}
\label{estimating tail events}Let $\theta\in\left(\left(\frac{p(2h+1)}{6}-1\right)^{+},hp-1\right),$
$\widetilde{N}>\frac{N}{2}\vee\left(2\left(h-\frac{\theta+1}{p}\right)\right)^{-1}$.
Then we have

(1) 
\begin{equation}
\begin{array}{ccc}
\mathrm{Cap}_{q,N}\left\{ w:\rho_{i}\left(\boldsymbol{w}^{(m+1)},\boldsymbol{w}^{(m)}\right)>\lambda\right\}  & \leqslant & C_{i}\lambda^{-2\widetilde{N}}\left(\frac{1}{2^{m}}\right)^{2i\widetilde{N}\left(h-\frac{\theta+1}{p}\right)-1},\end{array}\label{first estimate}
\end{equation}

(2)
\begin{equation}
\mathrm{Cap}_{q,N}\left\{ w:\rho_{i}\left(\boldsymbol{w}^{(m)}\right)>\lambda\right\} \leqslant C_{i}\lambda^{-2\widetilde{N}}.\label{second estimate}
\end{equation}
Here $C_{i}$ is a positive constant of the form $C_{i}=C_{1}C_{2}^{\tilde{N}}g\left(\tilde{N};N\right)\tilde{N}^{i\tilde{N}},$
where $C_{1}$ depends only on $q$ and $N,$ $C_{2}$ depends only
on $d,p,h,\gamma,\theta,q$ and $g\left(\tilde{N};N\right)$ is a
polynomial in $\tilde{N}$ with degree depending only on $N$ and
universal constant coefficients.\end{lem}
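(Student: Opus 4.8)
The plan is to bound the capacity of the tail events by combining the quasi-sure Tchebycheff inequality (Proposition \ref{Tchebycheff and Borel-Cantelli}(1)) with the polynomial estimates of Lemma \ref{polynomial estimates}, applied to the functionals $\rho_i$. The subtlety is that $\rho_i$ is not itself a polynomial functional; however, $\rho_i(\boldsymbol{w},\widetilde{\boldsymbol{w}})^{p/i}$ is a sum over dyadic intervals of the quantities $|w^i_{t_n^{k-1},t_n^k}-\tilde w^i_{t_n^{k-1},t_n^k}|^{p/i}$, and the $i$-th level increments $w^{(m),i}_{s,t}$ of the piecewise linear interpolation are polynomials of degree $i \le 3$ in the underlying Gaussian field. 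So I would raise $\rho_i$ to a large even power $2\widetilde N$ (with $p/i$ not necessarily an integer, one instead works with $\rho_i^{p}$ or chooses $\widetilde N$ so that $2\widetilde N$ makes everything a genuine polynomial of controlled degree — the paper's choice of $\widetilde N$ and the constants $g(\widetilde N;N)$, $\widetilde N^{i\widetilde N}$ strongly suggest that $(\rho_i)^{p}$, a polynomial of degree $ip \le 12$... more precisely one uses that $\sum_k |\cdot|^{p/i}$ raised to an integer power is, after expanding multinomially, a finite sum of products, each a polynomial of degree at most $2i\widetilde N$). Then $\|\rho_i^{2\widetilde N}\|_{q,N}$ is controlled via Lemma \ref{polynomial estimates}: the $q$-norm is comparable to the $2$-norm up to a factor $(q-1)^{i\widetilde N}$-ish, and the Malliavin derivatives up to order $N$ cost an extra factor like $(2i\widetilde N)^{(i+1)/2 \cdot N}$, which is where the polynomial factor $g(\widetilde N;N)$ in the constant comes from.

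The heart of the estimate is then the $L^2$ bound on $\rho_i(\boldsymbol{w}^{(m+1)},\boldsymbol{w}^{(m)})^{2\widetilde N}$ (and on $\rho_i(\boldsymbol{w}^{(m)})^{2\widetilde N}$). I would expand the $2\widetilde N$-th power, use hypercontractivity / the polynomial-norm equivalence to reduce every resulting $L^{2\widetilde N}$-type moment of a single dyadic increment to its $L^2$ norm (paying the price $C_2^{\widetilde N}\widetilde N^{i\widetilde N}$), and then insert the explicit $L^2$ estimates from Lemma \ref{L^2 estimates}. Concretely, for part (1): by definition
\[
\rho_i(\boldsymbol{w}^{(m+1)},\boldsymbol{w}^{(m)})^{p/i}=\sum_{n=1}^\infty n^\gamma\sum_{k=1}^{2^n}\bigl|w^{(m+1),i}_{t_n^{k-1},t_n^k}-w^{(m),i}_{t_n^{k-1},t_n^k}\bigr|^{p/i},
\]
and Lemma \ref{L^2 estimates}(2) shows each summand vanishes for $n\le m$ (when $i=1$) or decays like a negative power of $2$ in both $m$ and $n$; summing the geometric-type series in $n$ (the factor $n^\gamma$ is harmless against geometric decay, and $\gamma>p-1$ is exactly what makes the $n\le m$ contributions for $i=2,3$ summable after accounting for the $2^n$ terms) produces a bound of the shape $C\,(2^{-m})^{2i\widetilde N(h-(\theta+1)/p)-1}$ after taking the $2\widetilde N$-th power and the $L^2$ norm. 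The constraints $\theta\in\bigl((\tfrac{p(2h+1)}{6}-1)^+,\,hp-1\bigr)$ and $\widetilde N>\tfrac N2\vee(2(h-\tfrac{\theta+1}{p}))^{-1}$ are precisely the bookkeeping conditions ensuring (a) the exponent $2i\widetilde N(h-(\theta+1)/p)-1$ is positive for $i=1,2,3$ so the bound is summable in $m$, and (b) the various series in $n$ converge for all three levels $i=1,2,3$ simultaneously (the $i=3$, $n\le m$ term with the $(4h-1)$ and $(1+2h)$ exponents being the binding one, which is where $\tfrac{p(2h+1)}{6}$ enters). Finally, applying Proposition \ref{Tchebycheff and Borel-Cantelli}(1) to the lower-semicontinuous functional $\rho_i^{2\widetilde N}\in\mathbb D^q_N$ with level $\lambda^{2\widetilde N}$ converts the Sobolev-norm bound into the claimed capacity bound, with the $\lambda^{-2\widetilde N}$ coming from Tchebycheff and all other constants absorbed into $C_i$.

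I expect the main obstacle to be the careful verification that $\rho_i^{2\widetilde N}$ genuinely lies in $\mathbb D^q_N$ and is lower semi-continuous (so that Proposition \ref{Tchebycheff and Borel-Cantelli} applies), together with the precise tracking of how the degree of the relevant polynomial, hence the constants from Lemma \ref{polynomial estimates}, scale in $\widetilde N$ — in particular justifying the interchange of the infinite sum over $n$ with the Malliavin derivative and the $L^q$ norm, which requires a truncation-and-limit argument using the summability established above. The bulk of the remaining work is the explicit but routine geometric-series summation in $n$ using Lemma \ref{L^2 estimates}; I would isolate that as a computational sub-step and not belabour it. Part (2) is strictly easier than part (1) since one only needs $n$-summability of the level-$i$ increments of $\boldsymbol{w}^{(m)}$ itself (Lemma \ref{L^2 estimates}(1)), with no gain in $m$ required, yielding the $m$-independent bound $C_i\lambda^{-2\widetilde N}$.
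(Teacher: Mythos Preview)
Your overall strategy --- Tchebycheff inequality plus the polynomial estimates of Lemma~\ref{polynomial estimates} plus the $L^2$ bounds of Lemma~\ref{L^2 estimates} --- matches the paper, but the paper executes it along a different and cleaner route that dissolves precisely the obstacle you flag at the end.

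You propose to apply Tchebycheff directly to the functional $\rho_i^{2\widetilde N}$, and then worry (rightly) that $\rho_i^{p/i}$ involves non-integer powers $|\cdot|^{p/i}$ and an infinite sum, so that $\rho_i^{2\widetilde N}$ is not a polynomial functional and its membership in $\mathbb D^q_N$, lower semicontinuity, and the interchange of Malliavin derivatives with the infinite sum all require justification. The paper never faces any of this, because it applies sub-additivity of capacity \emph{before} Tchebycheff rather than after. Concretely, one first writes
\[
\Bigl\{\rho_i^{p/i}>\lambda^{p/i}\Bigr\}\subset\bigcup_{n\geqslant1}\Bigl\{\textstyle\sum_{k}|\cdot|^{p/i}>C_{\gamma,\theta}\,\lambda^{p/i}2^{-n\theta}\Bigr\}\subset\bigcup_{n\geqslant1}\bigcup_{k=1}^{2^n}\Bigl\{|\cdot|>C_{\gamma,\theta}^{i/p}\,\lambda\,2^{-ni(\theta+1)/p}\Bigr\},
\]
where $C_{\gamma,\theta}=(\sum_n n^\gamma 2^{-n\theta})^{-1}$, and then applies Tchebycheff to each \emph{single} increment functional $f^i_{m,n,k}:=|w^{(m+1),i}_{t_n^{k-1},t_n^k}-w^{(m),i}_{t_n^{k-1},t_n^k}|^{2\widetilde N}$. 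Since $|\cdot|^2$ is polynomial in the vector components and $\widetilde N\in\mathbb N$, each $f^i_{m,n,k}$ is a genuine polynomial of degree $2i\widetilde N$, so Lemma~\ref{polynomial estimates} applies without any approximation or limit argument. This also explains transparently the role of the auxiliary parameter $\theta$: it governs how the threshold $\lambda^{p/i}$ is distributed over the levels $n$ in the first inclusion above, and the constraints on $\theta$ then arise exactly from the geometric summation in $n$ after inserting Lemma~\ref{L^2 estimates}. In your approach the appearance of $\theta$ is somewhat ad hoc; in the paper's decomposition it is forced.

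So your proposal is not wrong in spirit, but the decomposition you sketch leaves a real technical gap (non-polynomiality of $\rho_i^{2\widetilde N}$), whereas the paper's ``decompose first, then Tchebycheff on each dyadic piece'' manoeuvre makes the whole argument elementary.
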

\begin{proof}
For $i=1,2,3$, set
\begin{eqnarray*}
I_{i}(m;\lambda) & = & \mbox{Cap}_{q,N}\left\{ w:\ \rho_{i}\left(\boldsymbol{w}^{\left(m+1\right)},\boldsymbol{w}^{\left(m\right)}\right)>\lambda\right\} \\
 & = & \mbox{Cap}_{q,N}\left\{ w:\ \rho_{i}\left(\boldsymbol{w}^{\left(m+1\right)},\boldsymbol{w}^{\left(m\right)}\right)^{\frac{p}{i}}>\lambda^{\frac{p}{i}}\right\} .
\end{eqnarray*}
By the definition of $\rho_{i},$ for every $\theta>0$ we have
\begin{align*}
 & \left\{ w:\ \rho_{i}\left(\boldsymbol{w}^{\left(m+1\right)},\boldsymbol{w}^{\left(m\right)}\right)^{\frac{p}{i}}>\lambda^{\frac{p}{i}}\right\} \\
\subset & \bigcup\limits _{n=1}^{\infty}\left\{ w:\ \sum_{k=1}^{2^{n}}\left\vert w_{t_{n}^{k-1},t_{n}^{k}}^{(m+1),i}-w_{t_{n}^{k-1},t_{n}^{k}}^{(m),i}\right\vert ^{\frac{p}{i}}>C_{\gamma,\theta}\lambda^{\frac{p}{i}}\left(\frac{1}{2^{n}}\right)^{\theta}\right\} ,
\end{align*}
where $C_{\gamma,\theta}=\left(\sum_{n=1}^{\infty}n^{\gamma}2^{-n\theta}\right)^{-1}$.
Therefore,
\begin{align}
 & \ I_{i}\left(m;\lambda\right)\nonumber \\
\leqslant & \ \sum_{n=1}^{\infty}\mbox{Cap}_{q,N}\left\{ w:\ \sum_{k=1}^{2^{n}}\left\vert w_{t_{n}^{k-1},t_{n}^{k}}^{(m+1),i}-w_{t_{n}^{k-1},t_{n}^{k}}^{(m),i}\right\vert ^{\frac{p}{i}}>\lambda^{\frac{p}{i}}C_{\gamma,\theta}\left(\frac{1}{2^{n}}\right)^{\theta}\right\} \nonumber \\
\leqslant & \ \sum_{n=1}^{\infty}\sum_{k=1}^{2^{n}}\mbox{Cap}_{q,N}\left\{ w:\ \left\vert w_{t_{n}^{k-1},t_{n}^{k}}^{(m+1),i}-w_{t_{n}^{k-1},t_{n}^{k}}^{(m),i}\right\vert ^{\frac{p}{i}}>\lambda^{\frac{p}{i}}C_{\gamma,\theta}\left(\frac{1}{2^{n}}\right)^{\theta+1}\right\} .\label{evenly distributed}
\end{align}

On the other hand, for any $\tilde{N}>0$ we have
\begin{align*}
 & \ \mbox{Cap}_{q,N}\left\{ \left\vert w_{t_{n}^{k-1},t_{n}^{k}}^{(m+1),i}-w_{t_{n}^{k-1},t_{n}^{k}}^{(m),i}\right\vert ^{\frac{p}{i}}>\lambda^{\frac{p}{i}}C_{\gamma,\theta}\left(\frac{1}{2^{n}}\right)^{\theta+1}\right\} \\
= & \ \mbox{Cap}_{q,N}\left\{ f_{m,n,k}^{i}>\left[\lambda C_{\gamma,\theta}^{\frac{i}{p}}\left(\frac{1}{2^{n}}\right)^{\frac{i}{p}(\theta+1)}\right]^{2\tilde{N}}\right\} ,
\end{align*}
where 
\[
\begin{array}{ccc}
f_{m,n,k}^{i}(w) & = & \left\vert w_{t_{n}^{k-1},t_{n}^{k}}^{(m+1),i}-w_{t_{n}^{k-1},t_{n}^{k}}^{(m),i}\right\vert ^{2\tilde{N}},\ \mbox{for }w\in W.\end{array}
\]
Since $\tilde{N}$ is a natural number, $f_{m,n,k}^{i}$ are polynomial
functionals of degree $2i\tilde{N}$, and hence they are $N$ times
differentiable in the sense of Malliavin provided $\tilde{N}\geqslant\frac{N}{2}$.
Consequently, we can apply Tchebycheff's inequality (the first part
of Proposition \ref{Tchebycheff and Borel-Cantelli}) to obtain 
\[
I_{i}(m;\lambda)\leqslant C_{q,N}\sum_{n=1}^{\infty}\sum_{k=1}^{2^{n}}\left(C_{\gamma,\theta}^{\frac{i}{p}}\lambda\left(\frac{1}{2^{n}}\right)^{\frac{i}{p}(\theta+1)}\right)^{-2\widetilde{N}}\left\Vert f_{m,n,k}^{i}\right\Vert _{q,N}.
\]
If $q>2,$ by (\ref{equivalence of q-norm and 2-norm}) of Lemma \ref{polynomial estimates},
we have 
\begin{eqnarray*}
\left\Vert f_{m,n,k}^{i}\right\Vert _{q,N} & \leqslant & \sum_{l=0}^{N}\|D^{l}f_{m,n,k}^{i}\|_{q;\mathcal{H}^{\otimes l}}\\
 & \leqslant & \left(2i\tilde{N}+1\right)(q-1)^{i\tilde{N}}\sum_{l=0}^{N}\left\Vert D^{l}f_{m,n,k}^{i}\right\Vert _{2;\mathcal{H}^{\otimes l}}.
\end{eqnarray*}
By (\ref{differential inequality}) of Lemma \ref{polynomial estimates},
we have 
\[
\left\Vert D^{l}f_{m,n,k}^{i}\right\Vert _{2;\mathcal{H}^{\otimes l}}\leqslant\left(2i\tilde{N}\right)^{\frac{N+1}{2}}\left\Vert f_{m,n,k}^{i}\right\Vert _{2}.
\]
Therefore, 
\[
\left\Vert f_{m,n,k}^{i}\right\Vert _{q,N}\leqslant(N+1)\left(2i\tilde{N}+1\right)(q-1)^{i\tilde{N}}\left(2i\tilde{N}\right)^{\frac{N+1}{2}}\left\Vert f_{m,n,k}^{i}\right\Vert _{2}.
\]
Moreover, since $w_{t_{n}^{k-1},t_{n}^{k}}^{(m+1),i}-w_{t_{n}^{k-1},t_{n}^{k}}^{(m),i}$
is an $(\mathbb{R}^{d})^{\otimes i}$-valued polynomial functional
of degree $i$, we know again from (\ref{equivalence of q-norm and 2-norm})
that 
\begin{eqnarray*}
\left\Vert f_{m,n,k}^{i}\right\Vert _{2} & = & \left\Vert w_{t_{n}^{k-1},t_{n}^{k}}^{(m+1),i}-w_{t_{n}^{k-1},t_{n}^{k}}^{(m),i}\right\Vert _{4\tilde{N};\left(\mathbb{R}^{d}\right)^{\otimes i}}^{2\tilde{N}}\\
 & \leqslant & (i+1)^{2\tilde{N}}\left(4\widetilde{N}-1\right)^{i\tilde{N}}\left\Vert w_{t_{n}^{k-1},t_{n}^{k}}^{(m+1),i}-w_{t_{n}^{k-1},t_{n}^{k}}^{(m),i}\right\Vert _{2;\left(\mathbb{R}^{d}\right)^{\otimes i}}^{2\tilde{N}}.
\end{eqnarray*}
Therefore,
\begin{align}
 & \left\Vert f_{m,n,k}^{i}\right\Vert _{q,N}\nonumber \\
\leqslant & (N+1)\left((q-1)^{i}(i+1)^{2}\right)^{\tilde{N}}\left(2i\tilde{N}+1\right)\left(2i\tilde{N}\right)^{\frac{N+1}{2}}\nonumber \\
 & \cdot\left(4\tilde{N}-1\right)^{i\tilde{N}}\left\Vert w_{t_{n}^{k-1},t_{n}^{k}}^{(m+1),i}-w_{t_{n}^{k-1},t_{n}^{k}}^{(m),i}\right\Vert _{2;\left(\mathbb{R}^{d}\right)^{\otimes i}}^{2\tilde{N}}\nonumber \\
\leqslant & (N+1)\left(1024(q-1)^{3}\right)^{\tilde{N}}\left(6\tilde{N}+1\right)\left(6\tilde{N}\right)^{N}\tilde{N}^{i\tilde{N}}\left\Vert w_{t_{n}^{k-1},t_{n}^{k}}^{(m+1),i}-w_{t_{n}^{k-1},t_{n}^{k}}^{(m),i}\right\Vert _{2;\left(\mathbb{R}^{d}\right)^{\otimes i}}^{2\tilde{N}}.\label{controlling the Sobolev norm}
\end{align}
 Let $C_{i}$ be the constant before $\left\Vert w_{t_{n}^{k-1},t_{n}^{k}}^{(m+1),i}-w_{t_{n}^{k-1},t_{n}^{k}}^{(m),i}\right\Vert _{2;\left(\mathbb{R}^{d}\right)^{\otimes i}}^{2\tilde{N}}$
on the R.H.S. of (\ref{controlling the Sobolev norm}). 

By absorbing the constant in Tchebycheff's inequality into $C_{i}$,
we arrive at
\begin{align}
 & \ I_{i}(m;\lambda)\nonumber \\
\leqslant & \ C_{i}\sum_{n=1}^{\infty}\sum_{k=1}^{2^{n}}\left(C_{\theta}^{\frac{i}{p}}\lambda\left(\frac{1}{2^{n}}\right)^{\frac{i}{p}(\theta+1)}\right)^{-2\widetilde{N}}\left\Vert w_{t_{n}^{k-1},t_{n}^{k}}^{(m+1),i}-w_{t_{n}^{k-1},t_{n}^{k}}^{(m),i}\right\Vert _{2;\left(\mathbb{R}^{d}\right)^{\otimes i}}^{2\tilde{N}}.\label{estimating I_i}
\end{align}
 Exactly the same computation yields
\begin{align}
 & \ \mbox{Cap}_{q,N}\left\{ w:\ \rho_{i}\left(\boldsymbol{w}^{\left(m\right)}\right)>\lambda\right\} \nonumber \\
\leqslant & \ C_{i}\sum_{n=1}^{\infty}\sum_{k=1}^{2^{n}}\left(C_{\gamma,\theta}^{\frac{i}{p}}\lambda\left(\frac{1}{2^{n}}\right)^{\frac{i}{p}(\theta+1)}\right)^{-2\widetilde{N}}\left\Vert w_{t_{n}^{k-1},t_{n}^{k}}^{(m),i}\right\Vert _{2;\left(\mathbb{R}^{d}\right)^{\otimes i}}^{2\tilde{N}}.\label{estimating rho_i}
\end{align}
We now substitute the estimates in Lemma \ref{L^2 estimates} into
(\ref{estimating I_i}) and (\ref{estimating rho_i}). In what follows,
we assume that $\theta\in\left(\left(\frac{p(2h+1)}{6}-1\right)^{+},hp-1\right),$
$\widetilde{N}>\frac{N}{2}\vee\left(2\left(h-\frac{\theta+1}{p}\right)\right)^{-1}$
for summability reason. We  also absorb the constant $C_{d,h}$ in
Lemma \ref{L^2 estimates} and $C_{\gamma,\theta}$.

For $i=1$, this gives 
\begin{align*}
I_{1}\left(m;\lambda\right) & \leqslant C_{1}\lambda^{-2\tilde{N}}2^{2\tilde{N}m\left(1-h\right)}\sum_{n=m+1}^{\infty}\sum_{k=1}^{2^{n}}2^{-2n\tilde{N}\left(1-\frac{\theta+1}{p}\right)}\\
 & \leqslant C_{1}\lambda^{-2\tilde{N}}2^{-m\left(2\tilde{N}\left(h-\frac{\theta+1}{p}\right)-1\right)}.
\end{align*}

For $i=2$, this gives\foreignlanguage{british}{
\begin{eqnarray*}
I_{2}\left(m;\lambda\right) & \leqslant & C_{2}\lambda^{-2\tilde{N}}\left(\sum_{n=1}^{m}\sum_{k=1}^{2^{n}}2^{-n\tilde{N}\left(1-\frac{4(\theta+1)}{p}\right)-m\tilde{N}\left(4h-1\right)}\right.\\
 &  & \left.+\sum_{n=m+1}^{\infty}\sum_{k=1}^{2^{n}}2^{-4n\tilde{N}\left(1-\frac{\theta+1}{p}\right)+4m\tilde{N}\left(1-h\right)}\right)\\
 & \leqslant & C_{2}\lambda^{-2\tilde{N}}2^{-m\left(4\tilde{N}\left(h-\frac{\theta+1}{p}\right)-1\right)}
\end{eqnarray*}
}

For $i=3$, this gives\foreignlanguage{british}{
\begin{eqnarray*}
I_{3}\left(m;\lambda\right) & \leqslant & C_{3}\lambda^{-2\tilde{N}}\left(\sum_{n=1}^{m}\sum_{k=1}^{2^{n}}2^{-n\tilde{N}\left(1+2h-\frac{6\left(\theta+1\right)}{p}\right)-m\tilde{N}\left(4h-1\right)}\right.\\
 &  & \left.+\sum_{n=m+1}^{\infty}\sum_{k=1}^{2^{n}}2^{-6n\tilde{N}\left(1-\frac{\theta+1}{p}\right)+6m\tilde{N}\left(1-h\right)}\right)\\
 & \leqslant & C_{3}\lambda^{-2\tilde{N}}2^{-m\left(6\tilde{N}\left(h-\frac{\theta+1}{p}\right)-1\right)}
\end{eqnarray*}
}

Therefore, for $i=1,2,3,$ we have
\[
I_{i}\left(m;\lambda\right)\leqslant C_{i}\lambda^{-2\tilde{N}}2^{-m\left(2i\tilde{N}\left(h-\frac{\theta+1}{p}\right)-1\right)}
\]
which gives (\ref{first estimate}). From the computation it is easy
to see that the constants $C_{i}$ here are of the form stated in
the lemma.

Similar computation yields that for $i=1,2,3,$ \foreignlanguage{british}{
\begin{eqnarray*}
 &  & \mbox{Cap}_{q,N}\left\{ w:\ \rho_{i}\left(\boldsymbol{w}^{\left(m\right)}\right)>\lambda\right\} \\
 & \leqslant & C_{i}\left(\lambda^{-2\tilde{N}}\sum_{n=1}^{m}\sum_{k=1}^{2^{n}}2^{-2n\tilde{N}i\left(h-\frac{\theta+1}{p}\right)}\right.
\end{eqnarray*}
\begin{align*}
 & \ \left.+\lambda^{-2\tilde{N}}\sum_{n=m+1}^{\infty}\sum_{k=1}^{2^{n}}2^{-2\tilde{N}i\left(n\left(1-\frac{\theta+1}{p}\right)-m\left(1-h\right)\right)}\right)\\
\leqslant & \ C_{i}\lambda^{-2\tilde{N}}
\end{align*}
}with $C_{i}$ of the form stated in the lemma. This gives (\ref{second estimate}).
\end{proof}

\begin{rem}
The explicit form of the constants in Lemma \ref{estimating tail events}
is used in the next section when proving a large deviation principle
for capacities.
\end{rem}

Now we are in a position to complete the proof of Theorem \ref{quasi-sure convergence}.

\begin{proof}[Proof of Theorem \ref{quasi-sure convergence}]

By rewriting (\ref{control of p-var metric as equality}) as
\begin{align}
 & \ d_{p}(\boldsymbol{w},\widetilde{\boldsymbol{w}})\nonumber \\
\leqslant & \ C_{d,p,\gamma}\max\left\{ \rho_{i}(\boldsymbol{w},\widetilde{\boldsymbol{w}})\left(\rho_{j}(\boldsymbol{w})+\rho_{j}(\widetilde{\boldsymbol{w}})\right)^{k}:\left(i,j,k\right)\in\mathbb{N}\times\mathbb{N}\times\mathbb{Z}_{+},i+jk\leqslant3\right\} \label{rewritting control of d_p}
\end{align}
 we only need to show that there exists a positive constant $\beta,$
such that for any $\ \left(i,j,k\right)\in\mathbb{N}\times\mathbb{N}\times\mathbb{Z}_{+}$
satisfying $i+jk\leqslant3$, we have
\begin{align}
 & \ \sum_{m=1}^{\infty}\text{Cap}_{q,N}\left\{ w:\ \rho_{i}\left(\boldsymbol{w}^{(m+1)},\boldsymbol{w}^{(m)}\right)\left(\rho_{j}\left(\boldsymbol{w}^{(m)}\right)+\rho_{j}\left(\boldsymbol{w}^{(m+1)}\right)\right)^{k}>\frac{1}{2^{m\beta}}\right\} \nonumber \\
< & \ \infty.\label{only need to show}
\end{align}
Indeed, if the above result holds, then by Lemma \ref{control of p-var metric as lemma},
we have 
\[
\sum_{m=1}^{\infty}\text{Cap}_{q,N}\left\{ w:\ d_{p}\left(\boldsymbol{w}^{(m)},\boldsymbol{w}^{(m+1)}\right)>C'_{d,p,\gamma}\frac{1}{2^{m\beta}}\right\} <\infty,
\]
where $C'_{d,p,\gamma}$ is some constant depending only on $d,p,\gamma$.
It follows from the quasi-sure version of Borel-Catelli's lemma (the
second part of Proposition \ref{Tchebycheff and Borel-Cantelli})
that 
\[
\text{Cap}_{q,N}\left(\limsup_{m\rightarrow\infty}\left\{ w:\ d_{p}\left(\boldsymbol{w}^{(m)},\boldsymbol{w}^{(m+1)}\right)>C_{d,p,\gamma}'\frac{1}{2^{m\beta}}\right\} \right)=0.
\]
Since
\begin{align*}
\mathcal{A}_{p}^{c}= & \left\{ w:\ \boldsymbol{w}^{(m)}\text{\ is not a Cauchy sequence in }\mbox{under }d_{p}\right\} \\
\subset & \left\{ w:\ \sum_{m=1}^{\infty}d_{p}\left(\boldsymbol{w}^{(m)},\boldsymbol{w}^{(m+1)}\right)=\infty\right\} \\
\subset & \limsup_{m\rightarrow\infty}\left\{ w:\ d_{p}\left(\boldsymbol{w}^{(m)},\boldsymbol{w}^{(m+1)}\right)>C'_{d,p,\gamma}\frac{1}{2^{m\beta}}\right\} ,
\end{align*}
it follows that $\mbox{Cap}_{q,N}(\mathcal{A}_{p}^{c})=0$ which completes
the proof.

Now we prove (\ref{only need to show}).

First consider the case $k>0.$ For any $\alpha,\beta>0,$ we have
\begin{align*}
\text{} & \text{Cap}_{q,N}\left\{ w:\ \rho_{i}\left(\boldsymbol{w}^{(m+1)},\boldsymbol{w}^{(m)}\right)\left(\rho_{j}\left(\boldsymbol{w}^{(m)}\right)+\rho_{j}\left(\boldsymbol{w}^{(m+1)}\right)\right)^{k}>\frac{1}{2^{m\beta}}\right\} \\
\leqslant\text{} & \text{Cap}_{q,N}\left\{ w:\ \rho_{i}\left(\boldsymbol{w}^{(m+1)},\boldsymbol{w}^{(m)}\right)>\frac{1}{2^{m\left(\beta+\alpha\right)}}\right\} \\
 & +\text{Cap}_{q,N}\left\{ w:\ \left(\rho_{j}\left(\boldsymbol{w}^{(m)}\right)+\rho_{j}\left(\boldsymbol{w}^{(m+1)}\right)\right)^{k}>2^{m\alpha}\right\} \\
\leqslant\text{} & \text{Cap}_{q,N}\left\{ w:\ \rho_{i}\left(\boldsymbol{w}^{(m+1)},\boldsymbol{w}^{(m)}\right)>\frac{1}{2^{m\left(\beta+\alpha\right)}}\right\} \\
 & +\text{Cap}_{q,N}\left\{ w:\ \rho_{j}\left(\boldsymbol{w}^{(m)}\right)>2^{\frac{m\alpha}{k}-1}\right\} \\
 & +\text{Cap}_{q,N}\left\{ w:\ \rho_{j}\left(\boldsymbol{w}^{(m+1)}\right)>2^{\frac{m\alpha}{k}-1}\right\} 
\end{align*}
By Lemma \ref{estimating tail events}, for $\theta\in\left(\left(\frac{p(2h+1)}{6}-1\right)^{+},hp-1\right)$,
$\widetilde{N}>\frac{N}{2}\vee\left(2\left(h-\frac{\theta+1}{p}\right)\right)^{-1}$
and $i=1,2,3,$ we have
\begin{align*}
 & \ \text{Cap}_{q,N}\left\{ w:\ \rho_{i}\left(\boldsymbol{w}^{(m+1)},\boldsymbol{w}^{(m)}\right)>\frac{1}{2^{m\left(\beta+\alpha\right)}}\right\} \\
\leqslant & \ C_{i}\left(\frac{1}{2^{m}}\right)^{2i\widetilde{N}\left(h-\frac{\theta+1}{p}\right)-1-2\left(\beta+\alpha\right)\widetilde{N}}.
\end{align*}
Let $\alpha,\beta>0$ be such that 
\begin{equation}
\frac{2\widetilde{N}\left(h-\frac{\theta+1}{p}\right)-1}{2\tilde{N}}>\beta+\alpha>0.\label{choice of alpha and beta}
\end{equation}
It follows easily that 
\begin{equation}
\sum_{m=1}^{\infty}\text{Cap}_{q,N}\left\{ w:\ \rho_{i}\left(\boldsymbol{w}^{(m+1)},\boldsymbol{w}^{(m)}\right)>\frac{1}{2^{m\left(\beta+\alpha\right)}}\right\} <\infty.\label{first term summable}
\end{equation}
Similarly,
\[
\text{Cap}_{q,N}\left\{ w:\ \rho_{j}\left(\boldsymbol{w}^{(m)}\right)>2^{\frac{m\alpha}{k}-1}\right\} \leqslant C_{j}2^{-\frac{m\alpha}{k}-1},
\]
and hence 
\[
\sum_{m=1}^{\infty}\text{Cap}_{q,N}\left\{ w:\ \rho_{j}\left(\boldsymbol{w}^{(m)}\right)>2^{\frac{m\alpha}{k}-1}\right\} <\infty.
\]
Combining with (\ref{first term summable}), we arrive at
\begin{align*}
 & \sum_{m=1}^{\infty}\text{Cap}_{q,N}\left\{ w:\ \rho_{i}\left(\boldsymbol{w}^{(m+1)},\boldsymbol{w}^{(m)}\right)\left(\rho_{j}\left(\boldsymbol{w}^{(m)}\right)+\rho_{j}\left(\boldsymbol{w}^{(m+1)}\right)\right)^{k}>\frac{1}{2^{m\beta}}\right\} \\
< & \ \infty.
\end{align*}

The case of $k=0$ follows directly from (\ref{first term summable}),
since for all $\alpha>0$, 
\[
\left\{ w:\ \rho_{i}\left(\boldsymbol{w}^{(m+1)},\boldsymbol{w}^{(m)}\right)>\frac{1}{2^{m\beta}}\right\} \subset\left\{ w:\ \rho_{i}\left(\boldsymbol{w}^{(m+1)},\boldsymbol{w}^{(m)}\right)>\frac{1}{2^{m\left(\beta+\alpha\right)}}\right\} .
\]

Now the proof is complete.

\end{proof}

\section{Large Deviations for Capacities}

In this section, we apply the previous technique to prove a large
deviation principle for capacities for Gaussian rough paths with long-time
memory. 

Before stating our main result, we first recall the definition of
general LDPs for induced capacities in Polish spaces (see \cite{GR},
\cite{Yoshida}).

Let $(B,H,\mu)$ be an abstract Wiener space. 
\begin{defn}
Let $q>1,N\in\mathbb{N},$ and let $\{T^{\varepsilon}\}$ be a family
of $\mbox{Cap}_{q,N}$-quasi surely defined maps from $B$ to some
Polish space $(X,d).$ We say that the family $\{T^{\varepsilon}\}$
satisfies the $\mbox{Cap}_{q,N}$-\textit{large deviation principle}
(or simply $\mbox{Cap}_{q,N}$-$LDP$) with good rate function $I:\ X\rightarrow[0,\infty]$
if 

(1) $I$ is a good rate function on $X,$ i.e. $I$ is lower semi-continuous
and for every $\alpha>0,$ the level set $\Psi_{I}(\alpha)=\{y\in X:\ I(y)\leqslant\alpha\}$
is compact in $X$;

(2) for every closed subset $C\subset X,$ we have 
\begin{equation}
\limsup_{\varepsilon\rightarrow0}\varepsilon^{2}\log\mbox{Cap}_{q,N}\left\{ w\in B:\ T^{\varepsilon}(w)\in C\right\} \leqslant-\frac{1}{q}\inf_{x\in C}I(x),\label{upper bound for LDP}
\end{equation}
and for ever open subset $G\subset X,$ we have 
\begin{equation}
\liminf_{\varepsilon\rightarrow0}\varepsilon^{2}\log\mbox{Cap}_{q,N}\left\{ w\in B:\ T^{\varepsilon}(w)\in G\right\} \geqslant-\frac{1}{q}\inf_{x\in G}I(x).\label{lower bound for LDP}
\end{equation}

\end{defn}

\begin{rem}
The appearance of the factor $1/q$ comes from the definition of $\mbox{Cap}_{q,N}$,
so 
\begin{equation}
\mbox{Cap}_{q,N}(A)\geqslant\mbox{Cap}_{q,0}(A)=\mathbb{P}(A)^{\frac{1}{q}},\ \forall A\in\mathcal{B}(B).\label{relation between measure and capacity}
\end{equation}
It is consistent with the classical large deviation principle for
probability measures.
\end{rem}

Due to the properties of $(q,N)$-capacity, many important results
for LDPs can be carried through in the capacity setting without much
difficulty, and the proofs are similar to the case of probability
measures. Here we present two fundamental results on transformations
of LDPs for capacities that are crucial for us, which did not appear
in \cite{GR},\cite{Yoshida} and related literatures. 

The first result is the contraction principle.
\begin{thm}
\label{contraction principle}Let $\{T^{\varepsilon}\}$ be a family
of $\mathrm{Cap}_{q,N}$-quasi surely defined maps from $B$ to $(X,d)$
satisfying the $\mathrm{Cap}_{q,N}$-LDP with good rate function $I.$
Let $F$ be a continuous map from $X$ to another Polish space $(Y,d').$
Then the family $\{F\circ T^{\varepsilon}\}$ of $C\mathrm{ap}_{q,N}$-quasi
surely defined maps satisfies the $\mathrm{Cap}_{q,N}$-LDP with good
rate function
\begin{equation}
J(y)=\inf_{x:\ F(x)=y}I(x),\label{rate function in contraction principle}
\end{equation}
where we define $\inf\emptyset=\infty.$ \end{thm}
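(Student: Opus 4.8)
The plan is to adapt the classical proof of the contraction principle (as in Dembo--Zeitouni) to the capacity setting, using only the monotonicity and set-theoretic properties of $\mathrm{Cap}_{q,N}$ together with the fact that the $\mathrm{Cap}_{q,N}$-LDP upper and lower bounds are stated for closed and open sets exactly as in the probabilistic case (up to the harmless factor $1/q$, which appears on both sides and hence plays no role). First I would check that $J$ is a good rate function on $Y$. Lower semi-continuity and compactness of level sets follow from the identity $\Psi_J(\alpha)=F\bigl(\Psi_I(\alpha)\bigr)$: since $I$ is a good rate function, $\Psi_I(\alpha)$ is compact in $X$, and $F$ is continuous, so $F(\Psi_I(\alpha))$ is compact in $Y$, hence closed; writing the sublevel sets of $J$ as continuous images of those of $I$ gives both properties at once. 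One has to be a little careful that the infimum in (\ref{rate function in contraction principle}) is attained whenever it is finite --- this is where compactness of $\Psi_I(\alpha)$ is used --- so that $\{J\le\alpha\}=F(\{I\le\alpha\})$ really holds rather than just $\subseteq$.

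Next I would prove the upper bound. For a closed set $C\subseteq Y$, the preimage $F^{-1}(C)$ is closed in $X$ by continuity of $F$, and $\{w:F\circ T^\varepsilon(w)\in C\}=\{w:T^\varepsilon(w)\in F^{-1}(C)\}$ as quasi-surely defined sets (here one should note that $F\circ T^\varepsilon$ is $\mathrm{Cap}_{q,N}$-quasi surely defined because $T^\varepsilon$ is and $F$ is everywhere defined and continuous). Applying the $\mathrm{Cap}_{q,N}$-LDP upper bound (\ref{upper bound for LDP}) for $T^\varepsilon$ to the closed set $F^{-1}(C)$ gives
\[
\limsup_{\varepsilon\to0}\varepsilon^2\log\mathrm{Cap}_{q,N}\{w:F\circ T^\varepsilon(w)\in C\}\le-\frac{1}{q}\inf_{x\in F^{-1}(C)}I(x),
\]
and the elementary identity $\inf_{x\in F^{-1}(C)}I(x)=\inf_{y\in C}\inf_{x:F(x)=y}I(x)=\inf_{y\in C}J(y)$ finishes it. The lower bound is entirely symmetric: for open $G\subseteq Y$, the set $F^{-1}(G)$ is open, the events agree, and (\ref{lower bound for LDP}) for $T^\varepsilon$ applied to $F^{-1}(G)$ together with the same reindexing of the infimum yields $\liminf_{\varepsilon\to0}\varepsilon^2\log\mathrm{Cap}_{q,N}\{w:F\circ T^\varepsilon(w)\in G\}\ge-\frac1q\inf_{y\in G}J(y)$.

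There is essentially no analytic obstacle here; the proof is a formal transfer. The only points requiring genuine (if minor) care, and which I would state explicitly, are: (i) that "quasi-surely defined" is preserved under post-composition with a continuous everywhere-defined map, so all the capacities in sight make sense and the set identities $\{F\circ T^\varepsilon\in A\}=\{T^\varepsilon\in F^{-1}(A)\}$ hold outside a slim set --- and capacity is insensitive to slim modifications; (ii) the attainment of the infimum defining $J$ on finite level sets, needed for the goodness of $J$; and (iii) observing that the factor $1/q$ is common to both the hypothesis and the conclusion, so that the argument is word-for-word the classical one with $\mathbb{P}$ replaced by $\mathrm{Cap}_{q,N}$ and $\inf I$ replaced by $\frac1q\inf I$ throughout. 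I expect the write-up to be short, with the goodness-of-$J$ verification being the longest single piece.
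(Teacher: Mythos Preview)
Your proposal is correct and follows exactly the same approach as the paper: verify that $J$ is a good rate function via the identity $\{J\leqslant\alpha\}=F(\{I\leqslant\alpha\})$ (using that the infimum defining $J$ is attained when finite), and then obtain the upper and lower bounds by pulling back closed/open sets through the continuous map $F$ and applying the $\mathrm{Cap}_{q,N}$-LDP for $\{T^\varepsilon\}$. The paper's own proof is in fact considerably terser than your outline and simply asserts that the bounds ``follow easily from the continuity of $F$''; your write-up would be a more detailed version of the same argument.
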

\begin{proof}
Since $I$ is a good rate function, it is not hard to see that $J$
is lower semi-continuous and also by the continuity of $F$, if $J(y)<\infty$
then the infimum in (\ref{rate function in contraction principle})
is attained at some point $x\in F^{-1}(y).$ Therefore, for any $\alpha>0,$
we have 
\[
\{y\in Y:\ J(y)\leqslant\alpha\}=F\left(\{x\in X:\ I(x)\leqslant\alpha\}\right),
\]
and hence $J$ is a good rate function. The $\mathrm{Cap}_{q,N}$-LDP
(the upper bound (\ref{upper bound for LDP}) and lower bound (\ref{lower bound for LDP}))
for the family $\{F\circ T^{\varepsilon}\}$ under the good rate function
$J$ follows easily from the continuity of $F.$
\end{proof}

The second result is about exponential good approximations. 

We first need the following definition.
\begin{defn}
Let $\{T^{\varepsilon,m}\}$ and $\{T^{\varepsilon}\}$ be two families
of $\mbox{Cap}_{q,N}$-quasi-surely defined maps from $B$ to $(X,d).$
We say that $\{T^{\varepsilon,m}\}$ are \textit{exponentially good
approximations} of $\{T^{\varepsilon}\}$ under $\mbox{Cap}_{q,N}$,
if for any $\lambda>0,$
\begin{equation}
\lim_{m\rightarrow\infty}\limsup_{\varepsilon\rightarrow0}\varepsilon^{2}\log\mbox{Cap}_{q,N}\{w:\ d(T^{\varepsilon,m}(w),T^{\varepsilon}(w))>\lambda\}=-\infty.\label{exponential good approximation}
\end{equation}

\end{defn}

Now we have the following result.
\begin{thm}
\label{exponential approximation}Suppose that for each $m\geqslant1,$
the family $\{T^{\varepsilon,m}\}$ of $\mathrm{Cap}_{q,N}$-quasi-surely
defined maps satisfies the $\mathrm{Cap}_{q,N}$-LDP with good rate
function $I_{m}$ and $\{T^{\varepsilon,m}\}$ are exponentially good
approximations of some family $\{T^{\varepsilon}\}$ of $\mathrm{Cap}_{q,N}$-quasi-surely
defined maps. Suppose further that the function $I$ defined by 
\begin{equation}
I(x)=\sup_{\lambda>0}\liminf_{m\rightarrow\infty}\inf_{y\in B_{x,\lambda}}I_{m}(y),\label{rate function given in exponential approximation}
\end{equation}
where $B_{x,\lambda}$ denotes the open ball $\{y\in X:\ d(y,x)<\lambda\},$
is a good rate function and for every closed set $C\subset X,$ 
\begin{equation}
\inf_{x\in C}I(x)\leqslant\limsup_{m\rightarrow\infty}\inf_{x\in C}I_{m}(x).\label{upper bound criterion in exponential approximations}
\end{equation}
Then $\{T^{\varepsilon}\}$ satisfies the $\mathrm{Cap}_{q,N}$-LDP
with good rate function $I.$\end{thm}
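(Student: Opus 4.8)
The plan is to adapt the classical proof of the theory of exponentially good approximations (see \cite{DZ}) to the capacity setting, exploiting only the sub-additivity and monotonicity of $\mathrm{Cap}_{q,N}$ together with the elementary inequality $\log(a+b)\leqslant\log 2+\max\{\log a,\log b\}$, which suffices to split finite unions of events inside $\varepsilon^{2}\log(\cdot)$ without any additional cost as $\varepsilon\to 0$. The hypotheses already guarantee that $I$ is a good rate function, so only the two bounds \eqref{upper bound for LDP} and \eqref{lower bound for LDP} for $\{T^{\varepsilon}\}$ need to be verified.

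For the upper bound, fix a closed set $C\subset X$ and $\lambda>0$, and let $C^{\lambda}=\{x:\ d(x,C)\leqslant\lambda\}$ be the closed $\lambda$-enlargement. From the inclusion
\[
\{w:\ T^{\varepsilon}(w)\in C\}\subset\{w:\ T^{\varepsilon,m}(w)\in C^{\lambda}\}\cup\{w:\ d(T^{\varepsilon,m}(w),T^{\varepsilon}(w))>\lambda\}
\]
and sub-additivity of $\mathrm{Cap}_{q,N}$, one gets
\[
\varepsilon^{2}\log\mathrm{Cap}_{q,N}\{T^{\varepsilon}\in C\}\leqslant\varepsilon^{2}\log 2+\max\Big\{\varepsilon^{2}\log\mathrm{Cap}_{q,N}\{T^{\varepsilon,m}\in C^{\lambda}\},\ \varepsilon^{2}\log\mathrm{Cap}_{q,N}\{d(T^{\varepsilon,m},T^{\varepsilon})>\lambda\}\Big\}.
\]
Taking $\limsup_{\varepsilon\to 0}$, applying the $\mathrm{Cap}_{q,N}$-LDP upper bound for $T^{\varepsilon,m}$ on the closed set $C^{\lambda}$ and then the exponential-approximation property to kill the second term as $m\to\infty$, one obtains $\limsup_{\varepsilon\to 0}\varepsilon^{2}\log\mathrm{Cap}_{q,N}\{T^{\varepsilon}\in C\}\leqslant-\tfrac{1}{q}\limsup_{m\to\infty}\inf_{x\in C^{\lambda}}I_{m}(x)$. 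Now invoke hypothesis \eqref{upper bound criterion in exponential approximations} applied to the closed set $C^{\lambda}$ to bound this by $-\tfrac{1}{q}\inf_{x\in C^{\lambda}}I(x)$, and finally let $\lambda\downarrow 0$: since $I$ is a good rate function its level sets are compact, so a standard argument gives $\sup_{\lambda>0}\inf_{x\in C^{\lambda}}I(x)=\inf_{x\in C}I(x)$, which is \eqref{upper bound for LDP}.

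For the lower bound, fix an open set $G\subset X$ and a point $x\in G$ with $I(x)<\infty$; choose $\lambda>0$ so small that the ball $B_{x,2\lambda}\subset G$. From
\[
\{w:\ T^{\varepsilon,m}(w)\in B_{x,\lambda}\}\subset\{w:\ T^{\varepsilon}(w)\in G\}\cup\{w:\ d(T^{\varepsilon,m}(w),T^{\varepsilon}(w))>\lambda\}
\]
and sub-additivity, together with the same $\log$-splitting, one estimates $\varepsilon^{2}\log\mathrm{Cap}_{q,N}\{T^{\varepsilon,m}\in B_{x,\lambda}\}$ from above by $\varepsilon^{2}\log 2$ plus the max of $\varepsilon^{2}\log\mathrm{Cap}_{q,N}\{T^{\varepsilon}\in G\}$ and $\varepsilon^{2}\log\mathrm{Cap}_{q,N}\{d(T^{\varepsilon,m},T^{\varepsilon})>\lambda\}$. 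Taking $\liminf_{\varepsilon\to 0}$ on the left, using the LDP lower bound for $T^{\varepsilon,m}$ to get $-\tfrac{1}{q}\inf_{y\in B_{x,\lambda}}I_{m}(y)$ there, and noting that by the exponential-approximation hypothesis the second term on the right tends to $-\infty$ as $m\to\infty$ (hence for $m$ large it does not attain the max), one deduces
\[
\liminf_{\varepsilon\to 0}\varepsilon^{2}\log\mathrm{Cap}_{q,N}\{T^{\varepsilon}\in G\}\geqslant-\tfrac{1}{q}\liminf_{m\to\infty}\inf_{y\in B_{x,\lambda}}I_{m}(y)\geqslant-\tfrac{1}{q}I(x),
\]
where the last step is just the definition \eqref{rate function given in exponential approximation} of $I$ together with monotonicity in $\lambda$. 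Taking the supremum over all such $x\in G$ yields \eqref{lower bound for LDP} and completes the proof.

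\medskip
The only genuinely delicate point is the passage $\lambda\downarrow 0$ in the upper bound, i.e. the identity $\sup_{\lambda>0}\inf_{x\in C^{\lambda}}I(x)=\inf_{x\in C}I(x)$ for $C$ closed; this uses that $I$ is a good rate function (compact level sets) and is the standard ingredient in the classical theorem, so I expect no new obstacle in the capacity setting — all the measure-theoretic steps are replaced verbatim by sub-additivity and monotonicity of $\mathrm{Cap}_{q,N}$.
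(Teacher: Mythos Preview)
Your proposal is correct and follows essentially the same approach as the paper's proof: the upper bound argument is identical, and for the lower bound you use the same key inclusion $\{T^{\varepsilon,m}\in B_{x,\lambda}\}\subset\{T^{\varepsilon}\in B_{x,2\lambda}\}\cup\{d(T^{\varepsilon,m},T^{\varepsilon})>\lambda\}$ combined with the exponential approximation and the definition \eqref{rate function given in exponential approximation} of $I$. The only minor difference is that the paper first establishes the two-sided identity $-\tfrac{1}{q}I(x)=\inf_{\lambda>0}\limsup_{\varepsilon}\varepsilon^{2}\log\mathrm{Cap}_{q,N}\{T^{\varepsilon}\in B_{x,\lambda}\}=\inf_{\lambda>0}\liminf_{\varepsilon}\varepsilon^{2}\log\mathrm{Cap}_{q,N}\{T^{\varepsilon}\in B_{x,\lambda}\}$ as an intermediate step, whereas you go directly to the lower bound using only the inequality you need; this is a harmless streamlining.
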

\begin{proof}
\textit{Upper bound.} Let $C$ be a closed subset of $X$. For any
$\lambda>0,$ let $C_{\lambda}=\{x:\ d(x,C)\leqslant\lambda\}.$ Since
\begin{align*}
 & \left\{ w:\ T^{\varepsilon}(w)\in C\right\} \\
\subset & \left\{ w:\ T^{\varepsilon,m}(w)\in C_{\lambda}\right\} \bigcup\left\{ w:\ d\left(T^{\varepsilon,m}(w),T^{\varepsilon}(w)\right)>\lambda\right\} ,
\end{align*}
it follows from the $\mathrm{Cap}_{q,N}$-LDP for $\{T^{\varepsilon,m}\}$
(the upper bound) that 
\begin{align*}
 & \ \limsup_{\varepsilon\rightarrow0}\varepsilon^{2}\log\mathrm{Cap}_{q,N}\left\{ w:\ T^{\varepsilon}(w)\in C\right\} \\
\leqslant & \ \limsup_{\varepsilon\rightarrow0}\varepsilon^{2}\log\mathrm{Cap}_{q,N}\left\{ w:\ T^{\varepsilon,m}(w)\in C_{\lambda}\right\} \\
 & \ \vee\limsup_{\varepsilon\rightarrow0}\varepsilon^{2}\log\mathrm{Cap}_{q,N}\left\{ w:\ d\left(T^{\varepsilon,m}(w),T^{\varepsilon}(w)\right)>\lambda\right\} \\
\leqslant & \ \left(-\frac{1}{q}\inf_{x\in C_{\lambda}}I_{m}(x)\right)\vee\limsup_{\varepsilon\rightarrow0}\varepsilon^{2}\log\mathrm{Cap}_{q,N}\left\{ w:\ d\left(T^{\varepsilon,m}(w),T^{\varepsilon}(w)\right)>\lambda\right\} .
\end{align*}
By letting $m\rightarrow\infty,$ we obtain from (\ref{exponential good approximation})
and (\ref{upper bound criterion in exponential approximations}) that
\begin{align*}
\limsup_{\varepsilon\rightarrow0}\varepsilon^{2}\log\mathrm{Cap}_{q,N}\left\{ w:\ T^{\varepsilon}(w)\in C\right\} \leqslant & -\frac{1}{q}\limsup_{m\rightarrow\infty}\inf_{x\in C_{\lambda}}I_{m}(x)\\
\leqslant & -\frac{1}{q}\inf_{x\in C_{\lambda}}I(x).
\end{align*}
Now the upper bound (\ref{upper bound for LDP}) follows from a basic
property for good rate functions (see \cite{DZ}, Lemma 4.1.6) that
\[
\lim_{\lambda\rightarrow0}\inf_{x\in C_{\lambda}}I(x)=\inf_{x\in C}I(x).
\]

To prove the lower bound (\ref{lower bound for LDP}), we first show
that
\begin{eqnarray}
-\frac{1}{q}I(x) & = & \inf_{\lambda>0}\limsup_{\varepsilon\rightarrow0}\varepsilon^{2}\log\mathrm{Cap}_{q,N}\left\{ w:\ T^{\varepsilon}(w)\in B_{x,\lambda}\right\} \nonumber \\
 & = & \inf_{\lambda>0}\liminf_{\varepsilon\rightarrow0}\varepsilon^{2}\log\mathrm{Cap}_{q,N}\left\{ w:\ T^{\varepsilon}(w)\in B_{x,\lambda}\right\} .\label{Key step in exponential approximation}
\end{eqnarray}
 In fact, since
\begin{align}
 & \left\{ w:\ T^{\varepsilon,m}(w)\in B_{x,\lambda}\right\} \nonumber \\
\subset & \left\{ w:\ T^{\varepsilon}(w)\in B_{x,2\lambda}\right\} \bigcup\left\{ w:\ d(T^{\varepsilon,m}(w),T^{\varepsilon}(w))>\lambda\right\} ,\label{event inclusion in LDP}
\end{align}
we have 
\begin{align*}
 & \ \mathrm{Cap}_{q,N}\left\{ w:\ T^{\varepsilon,m}(w)\in B_{x,\lambda}\right\} \\
\leqslant & \ \mathrm{Cap}_{q,N}\left\{ w:\ T^{\varepsilon}(w)\in B_{x,2\lambda}\right\} +\left\{ w:\ d(T^{\varepsilon,m}(w),T^{\varepsilon}(w))>\lambda\right\} .
\end{align*}
It follows from the $\mathrm{Cap}_{q,N}$-LDP (the lower bound) for
$\{T^{\varepsilon,m}\}$ that 
\begin{align*}
-\frac{1}{q}\inf_{y\in B_{x,\lambda}}I_{m}(y)\leqslant & \ \liminf_{\varepsilon\rightarrow0}\varepsilon^{2}\log\mathrm{Cap}_{q,N}\left\{ w:\ T^{\varepsilon,m}(w)\in B_{x,\lambda}\right\} \\
\leqslant & \ \liminf_{\varepsilon\rightarrow0}\varepsilon^{2}\left(\log\mathrm{Cap}_{q,N}\left\{ w:\ T^{\varepsilon}(w)\in B_{x,2\lambda}\right\} \right.\\
 & \left.\ \vee\log\mathrm{Cap}_{q,N}\left\{ w:\ d(T^{\varepsilon,m}(w),T^{\varepsilon}(w))>\lambda\right\} \right)\\
\leqslant & \ \liminf_{\varepsilon\rightarrow0}\varepsilon^{2}\log\mathrm{Cap}_{q,N}\left\{ w:\ T^{\varepsilon}(w)\in B_{x,2\lambda}\right\} \\
 & \ \vee\limsup_{\varepsilon\rightarrow0}\varepsilon^{2}\log\mathrm{Cap}_{q,N}\left\{ w:\ d(T^{\varepsilon,m}(w),T^{\varepsilon}(w))>\lambda\right\} ,
\end{align*}
and (\ref{exponential good approximation}) implies that
\begin{align*}
-\frac{1}{q}\liminf_{m\rightarrow\infty}\inf_{y\in B_{x,\lambda}}I_{m}(y)\leqslant & \ \liminf_{\varepsilon\rightarrow0}\varepsilon^{2}\log\mathrm{Cap}_{q,N}\left\{ w:\ T^{\varepsilon}(w)\in B_{x,2\lambda}\right\} .
\end{align*}
By taking infimum over $\lambda>0$, we obtain
\[
-\frac{1}{q}I(x)\leqslant\inf_{\lambda>0}\liminf_{\varepsilon\rightarrow0}\varepsilon^{2}\log\mathrm{Cap}_{q,N}\left\{ w:\ T^{\varepsilon}(w)\in B_{x,2\lambda}\right\} .
\]
On the other hand, by exchanging $T^{\varepsilon,m}$ and $T^{\varepsilon}$
in (\ref{event inclusion in LDP}), the same argument yields that
(using the upper bound in the $\mathrm{Cap}_{q,N}$-LDP)
\[
\inf_{\lambda>0}\limsup_{\varepsilon\rightarrow0}\varepsilon^{2}\log\mathrm{Cap}_{q,N}\left\{ w:\ T^{\varepsilon}(w)\in B_{x,\lambda}\right\} \leqslant-\frac{1}{q}I(x).
\]
Therefore, (\ref{Key step in exponential approximation}) follows.

\textit{Lower bound.} Let $G$ be an open subset of $X$. For any
fixed $x\in G,$ take $\lambda>0$ such that $B_{x,\lambda}\subset G.$
It follows from (\ref{Key step in exponential approximation}) that
\begin{align*}
 & \liminf_{\varepsilon\rightarrow0}\varepsilon^{2}\log\mbox{Cap}_{q,N}\left\{ w:\ T^{\varepsilon}(w)\in G\right\} \\
\geqslant & \liminf_{\varepsilon\rightarrow0}\varepsilon^{2}\log\mbox{Cap}_{q,N}\left\{ w:\ T^{\varepsilon}(w)\in B_{x,\lambda}\right\} \\
\geqslant & -\frac{1}{q}I(x).
\end{align*}
Therefore, the lower bound (\ref{lower bound for LDP}) holds.
\end{proof}

Consider the abstract Wiener space $(W,\mathcal{H},\mathbb{P})$ associated
with a Gaussian process satisfying the assumptions in Theorem \ref{quasi-sure convergence}.
According to \cite{FV}, the covariance function of the process has
finite $(1/2h)$-variation in the 2D sense, and $\mathcal{H}$ is
continuously embedded in the space of continuous paths with finite
$(1/2h)$-variation. Therefore, every $h\in\mathcal{H}$ admits a
natural lifting $\boldsymbol{h}$ in $G\Omega_{p}(\mathbb{R}^{d})$
in the sense of iterated Young's integrals (see \cite{Young}). 

Recall that $\mathcal{A}_{p}$ is the set of paths $w\in W$ such
that the lifting $\boldsymbol{w}^{(m)}$ of the dyadic piecewise linear
interpolation of $w$ is a Cauchy sequence under $d_{p}$, and the
map 
\[
F:\ w\in\mathcal{A}_{p}\mapsto\boldsymbol{w}=\left(1,w^{1},\cdots,w^{[p]}\right):=\lim_{m\rightarrow\infty}\boldsymbol{w}^{(m)}\in G\Omega_{p}\left(\mathbb{R}^{d}\right)
\]
is well-defined. For $\varepsilon>0,$ let $T^{\varepsilon}:\ \mathcal{A}_{p}\rightarrow G\Omega_{p}\left(\mathbb{R}^{d}\right)$
be the map defined by 
\[
T^{\varepsilon}(w)=\delta_{\varepsilon}\boldsymbol{w}:=(1,\varepsilon w^{1},\cdots,\varepsilon^{[p]}w^{[p]}).
\]
By Theorem \ref{quasi-sure convergence}, $\mathcal{A}_{p}^{c}$ is
a slim set. Therefore, $T^{\varepsilon}$ is quasi-surely well-defined.

Let 
\begin{equation}
\Lambda(w)=\begin{cases}
\frac{1}{2}\|w\|_{\mathcal{H}}^{2}, & w\in\mathcal{H};\\
\infty, & \mbox{otherwise},
\end{cases}\label{original rate function}
\end{equation}
and define $I:\ G\Omega_{p}\left(\mathbb{R}^{d}\right)\rightarrow[0,\infty]$
by 
\begin{equation}
I(\boldsymbol{w})=\inf\{\Lambda(w):\ w\in\mathcal{A}_{p},\ F(w)=\boldsymbol{w}\}.\label{good rate function}
\end{equation}
We will see later in Lemma \ref{Cameron-Martin convergence} that
$\mathcal{H}\subset\mathcal{A}_{p}$ and hence
\[
I(\boldsymbol{w})=\begin{cases}
\frac{1}{2}\left\Vert \pi_{1}(\boldsymbol{w})_{0,\cdot}\right\Vert _{\mathcal{H}}^{2}, & \mbox{if}\ \pi_{1}(\boldsymbol{w})_{0,\cdot}\in\mathcal{H}\ \mbox{and }\boldsymbol{w}=F\left(\pi_{1}(\boldsymbol{w})_{0,\cdot}\right);\\
\infty, & \mbox{otherwise,}
\end{cases}
\]
where $\pi_{1}$ is the projection onto the first level path.

Now we can state our main result of this section.
\begin{thm}
\label{LDP for capacity}For any $q>1,$ $N\in\mathbb{N},$ the family
$\{T^{\varepsilon}\}$ of $\mathrm{Cap}_{q,N}$-quasi-surely defined
maps from $W$ to $G\Omega_{p}\left(\mathbb{R}^{d}\right)$ satisfies
the $\mathrm{Cap}_{q,N}$-LDP with good rate function $I$.
\end{thm}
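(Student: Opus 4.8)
The plan is to prove Theorem \ref{LDP for capacity} by combining a finite-dimensional LDP with the exponential-good-approximation machinery of Theorem \ref{exponential approximation}, using the quantitative capacity estimates of Lemma \ref{estimating tail events} as the key new ingredient. First I would introduce, for each $m\geqslant1$, the approximating family $T^{\varepsilon,m}(w)=\delta_{\varepsilon}\boldsymbol{w}^{(m)}$, where $\boldsymbol{w}^{(m)}$ is the lifting of the $m$-th dyadic piecewise linear interpolation. Since $w\mapsto\boldsymbol{w}^{(m)}$ factors through the finitely many increments $(w_{t_m^k}-w_{t_m^{k-1}})_{k=1}^{2^m}$, which are jointly Gaussian, and then through a fixed continuous (indeed polynomial) map into $G\Omega_p(\mathbb{R}^d)$, the contraction principle for capacities (Theorem \ref{contraction principle}) applied to the classical finite-dimensional $\mathrm{Cap}_{q,N}$-LDP on the Gaussian space of these increments yields that $\{T^{\varepsilon,m}\}$ satisfies the $\mathrm{Cap}_{q,N}$-LDP with good rate function $I_m(\boldsymbol{u})=\inf\{\frac12\|h\|_{\mathcal{H}}^2:h\in\mathcal{H},\ \boldsymbol{h}^{(m)}=\boldsymbol{u}\}$. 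Here one needs the finite-dimensional $\mathrm{Cap}_{q,N}$-LDP, which is standard (it follows, e.g., as in Yoshida \cite{Yoshida} or by a direct Laplace-type argument combined with \eqref{relation between measure and capacity} and capacity estimates for Gaussian polynomials); I would state this as a lemma and reduce to it.

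The second and central step is to show that $\{T^{\varepsilon,m}\}$ are exponentially good approximations of $\{T^{\varepsilon}\}$ under $\mathrm{Cap}_{q,N}$, i.e. that for every $\lambda>0$,
\[
\lim_{m\to\infty}\limsup_{\varepsilon\to0}\varepsilon^2\log\mathrm{Cap}_{q,N}\{w:\ d_p(T^{\varepsilon,m}(w),T^{\varepsilon}(w))>\lambda\}=-\infty.
\]
The point is that $d_p(T^{\varepsilon,m}(w),T^{\varepsilon}(w))$ is controlled, via Lemma \ref{control of p-var metric as lemma} and its rewriting \eqref{rewritting control of d_p}, by a maximum of terms $\rho_i(\delta_\varepsilon\boldsymbol{w}^{(m)},\delta_\varepsilon\boldsymbol{w})(\rho_j(\delta_\varepsilon\boldsymbol{w}^{(m)})+\rho_j(\delta_\varepsilon\boldsymbol{w}))^k$ with $i+jk\leqslant3$, and the $\rho$-functionals scale as $\rho_i(\delta_\varepsilon\boldsymbol{w})=\varepsilon^i\rho_i(\boldsymbol{w})$. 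Writing $\boldsymbol{w}=\lim_{l\to\infty}\boldsymbol{w}^{(l)}$ and telescoping $\rho_i(\boldsymbol{w}^{(m)},\boldsymbol{w})\leqslant\sum_{l\geqslant m}\rho_i(\boldsymbol{w}^{(l)},\boldsymbol{w}^{(l+1)})$, I would bound the relevant capacity by a sum of capacities of events $\{\rho_i(\boldsymbol{w}^{(l+1)},\boldsymbol{w}^{(l)})>c\,\varepsilon^{-i}\cdot\text{(something)}\}$ and $\{\rho_j(\boldsymbol{w}^{(l)})>c\,\varepsilon^{-j}\cdot\text{(something)}\}$, each of which is estimated by Lemma \ref{estimating tail events}. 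The crucial feature is that the bounds in Lemma \ref{estimating tail events} hold with $C_i$ of the explicit form $C_1C_2^{\widetilde N}g(\widetilde N;N)\widetilde N^{i\widetilde N}$ and with exponents $2\widetilde N$ in $\lambda$: choosing $\widetilde N=\widetilde N(\varepsilon)\to\infty$ appropriately as $\varepsilon\to0$ (tracking that $C_2^{\widetilde N}\widetilde N^{i\widetilde N}$ grows subexponentially on the $\varepsilon^2\log$ scale once divided by the $\lambda^{-2\widetilde N}$ gain), one extracts an arbitrarily negative exponential rate, and the extra decay factor $(2^{-m})^{2i\widetilde N(h-(\theta+1)/p)-1}$ in \eqref{first estimate} then sends the bound to $-\infty$ as $m\to\infty$. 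This tuning of $\widetilde N$ against $\varepsilon$ and $m$ is the main obstacle and the heart of the argument.

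The third step is to identify the rate function. I would first prove the auxiliary Lemma \ref{Cameron-Martin convergence}, namely that $\mathcal{H}\subset\mathcal{A}_p$, that $\boldsymbol{h}^{(m)}\to\boldsymbol{h}$ in $d_p$ for every $h\in\mathcal{H}$ (using that $\mathcal{H}$ embeds continuously into $(1/2h)$-variation paths and Young's integration / interpolation estimates), and that $F(h)=\boldsymbol{h}$. From this, $I$ as defined in \eqref{good rate function} coincides with the stated expression in terms of $\pi_1$. Then I would verify the two hypotheses of Theorem \ref{exponential approximation}: that $I(\boldsymbol{w})=\sup_{\lambda>0}\liminf_{m\to\infty}\inf_{\boldsymbol{u}\in B_{\boldsymbol{w},\lambda}}I_m(\boldsymbol{u})$ is a good rate function, and that $\inf_{\boldsymbol{w}\in C}I(\boldsymbol{w})\leqslant\limsup_{m\to\infty}\inf_{\boldsymbol{w}\in C}I_m(\boldsymbol{w})$ for closed $C$. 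Goodness of $I$ follows because $\{I\leqslant\alpha\}=F(\{\Lambda\leqslant\alpha\})=\{\boldsymbol{h}:h\in\mathcal{H},\ \frac12\|h\|_{\mathcal H}^2\leqslant\alpha\}$, the image under the continuous map $F$ of a set that is compact in the $(1/2h)$-variation topology (by Arzel\`a--Ascoli on Cameron--Martin balls), hence compact in $G\Omega_p$. For the remaining identifications one argues, as in the classical Wong--Zakai / exponential approximation proofs for Gaussian rough paths in \cite{FV}, that $\inf_{\boldsymbol{u}\in B_{\boldsymbol{w},\lambda}}I_m(\boldsymbol{u})$ is essentially $\frac12\|h\|_{\mathcal H}^2$ for $h$ with $F(h)$ near $\boldsymbol{w}$, using the convergence $\boldsymbol{h}^{(m)}\to\boldsymbol{h}$ and lower semicontinuity of $\|\cdot\|_{\mathcal H}$; the closed-set inequality \eqref{upper bound criterion in exponential approximations} is then routine. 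With all hypotheses of Theorem \ref{exponential approximation} verified, the $\mathrm{Cap}_{q,N}$-LDP for $\{T^{\varepsilon}\}$ with good rate function $I$ follows.
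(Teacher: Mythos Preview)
Your proposal is correct and follows essentially the same route as the paper: the finite-dimensional $\mathrm{Cap}_{q,N}$-LDP plus the contraction principle for $\{T^{\varepsilon,m}\}$ (the paper's Corollary \ref{Cap-LDP for T^epsilon,m}), exponential good approximation via Lemma \ref{estimating tail events} with the explicit choice $\widetilde N=[\varepsilon^{-2}]$ (the paper's Lemma \ref{Key Lemma for LDP}), and rate-function identification using Lemma \ref{Cameron-Martin convergence} exactly as you outline. The only minor deviation is that the paper telescopes at the level of $d_p$ (which is a metric) rather than $\rho_i$, so that after applying \eqref{rewritting control of d_p} only quantities $\rho_j(\boldsymbol{w}^{(l)})$ with finite $l$ appear and Lemma \ref{estimating tail events} applies directly, whereas your telescoping of $\rho_i(\boldsymbol{w}^{(m)},\boldsymbol{w})$ leaves $\rho_j(\boldsymbol{w})$ for the limit path to be handled separately.
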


In particular, since the projection map from $G\Omega_{p}\left(\mathbb{R}^{d}\right)$
onto the first level path is continuous, we immediately obtain the
following result of Yoshida \cite{Yoshida} in the case of Gaussian
processes with long-time memory. 
\begin{cor}
The family of maps $\{\varepsilon w\}$ satisfies the $\mathrm{Cap}_{q,N}$-LDP
with good rate function $\Lambda.$
\end{cor}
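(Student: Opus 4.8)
The plan is, for each fixed $q>1$ and $N\in\mathbb{N}$, to deduce the corollary from Theorem \ref{LDP for capacity} by a single application of the contraction principle (Theorem \ref{contraction principle}). The continuous map to be used is the projection $\pi_1\colon G\Omega_p(\mathbb{R}^d)\to W$ onto the first level path, sending a geometric rough path $\boldsymbol{w}$ to the genuine path $t\mapsto w^1_{0,t}$ (recall that paths in $W$ start at the origin). Two elementary observations make this work: first, since for single-interval refinements the $p$-variation of the first level path dominates its supremum norm, one has $\sup_{t}|u^1_{0,t}-w^1_{0,t}|\leqslant d_p(\boldsymbol{u},\boldsymbol{w})$, so $\pi_1$ is $1$-Lipschitz, in particular continuous; second, straight from the definitions of the dilation $\delta_\varepsilon$ and of $\pi_1$ one has $\pi_1\circ T^\varepsilon(w)=\pi_1(\delta_\varepsilon\boldsymbol{w})=\varepsilon\,w$ for every $w\in\mathcal{A}_p$, hence $\mathrm{Cap}_{q,N}$-quasi-surely by Theorem \ref{quasi-sure convergence}, which is exactly the regime in which Theorem \ref{contraction principle} was stated.

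Granting these observations, Theorem \ref{contraction principle} applied to $F=\pi_1$ and the family $\{T^\varepsilon\}$ of Theorem \ref{LDP for capacity} yields at once that $\{\varepsilon w\}=\{\pi_1\circ T^\varepsilon\}$ satisfies the $\mathrm{Cap}_{q,N}$-LDP on $W$ with good rate function
\[
J(v)=\inf\bigl\{I(\boldsymbol{w}):\ \boldsymbol{w}\in G\Omega_p(\mathbb{R}^d),\ \pi_1(\boldsymbol{w})=v\bigr\},\qquad v\in W .
\]
The remaining task is to identify $J$ with $\Lambda$. For this I would invoke the explicit description of $I$ recorded just after (\ref{good rate function}), which rests on the inclusion $\mathcal{H}\subset\mathcal{A}_p$ from Lemma \ref{Cameron-Martin convergence}: if $I(\boldsymbol{w})<\infty$ then necessarily $\pi_1(\boldsymbol{w})\in\mathcal{H}$ and $\boldsymbol{w}=F(\pi_1(\boldsymbol{w}))$, so for fixed $v$ the infimum defining $J(v)$ ranges over at most one rough path. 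Hence $J(v)=\infty=\Lambda(v)$ when $v\notin\mathcal{H}$, while for $v\in\mathcal{H}$ the unique competitor is $\boldsymbol{w}=F(v)$ and $J(v)=\tfrac12\|v\|_{\mathcal{H}}^2=\Lambda(v)$. Lower semicontinuity of $J$ and compactness of its level sets come for free from the contraction principle; alternatively they follow directly from the compact embedding $\mathcal{H}\hookrightarrow W$.

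I do not expect a substantial obstacle in the corollary itself — all the analytic work has already been absorbed into Theorem \ref{LDP for capacity} and Lemma \ref{Cameron-Martin convergence}. The only points requiring care are bookkeeping ones: fixing the convention that the first level path is $w^1_{0,\cdot}$ so that $\pi_1\circ T^\varepsilon$ is literally $w\mapsto\varepsilon w$ on $\mathcal{A}_p$, and keeping in mind that $\{\varepsilon w\}$ is merely $\mathrm{Cap}_{q,N}$-quasi-surely defined. If one wished to avoid citing the explicit form of $I$, the equality $J=\Lambda$ can be argued directly: the inequality $J\leqslant\Lambda$ uses that $F(v)$ is an admissible competitor for $v\in\mathcal{H}$, and $J\geqslant\Lambda$ uses that any $\boldsymbol{w}$ with $I(\boldsymbol{w})<\infty$ and $\pi_1(\boldsymbol{w})=v$ forces, by the very definition (\ref{good rate function}) together with Lemma \ref{Cameron-Martin convergence}, that $v\in\mathcal{H}$ and $I(\boldsymbol{w})\geqslant\tfrac12\|v\|_{\mathcal{H}}^2$.
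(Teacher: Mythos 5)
Your proposal is correct and follows exactly the route the paper takes: the paper derives the corollary from Theorem \ref{LDP for capacity} by the contraction principle (Theorem \ref{contraction principle}) applied to the continuous projection onto the first level path, with the identification $J=\Lambda$ being immediate from the explicit form of $I$ and Lemma \ref{Cameron-Martin convergence}. Your extra care about the quasi-sure domain and the Lipschitz bound for $\pi_1$ is fine but adds nothing beyond the paper's one-line argument.
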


Moreover, according to the universal limit theorem (Theorem \ref{Universal Limit Theorem})
and the contraction principle (Theorem \ref{contraction principle}),
a direct corollary of Theorem \ref{LDP for capacity} is the LDPs
for capacities for solutions to differential equations driven by Gaussian
rough paths with long-time memory. This generalizes the classical
Freidlin-Wentzell theory for diffusion measures in the quasi-sure
and rough path setting. Here we are again taking the advantage of
working in the stronger topology (the $p$-variation topology), under
which we have nice stability for differential equations.

It should be pointed out that the lifting map $F$, which can be regarded
as the pathwise solution to a differential equation driven by $w$
with a polynomial one form, is \textit{not} continuous under the uniform
topology (see \cite{LCL}, \cite{LQ}). Therefore the contraction
principle cannot be applied directly in our context. A standard way
of getting around this difficulty, as in \cite{LQZ} for Brownian
motion and \cite{MS} for fractional Brownian motion in the case of
LDPs for probability measures, is to construct exponentially good
approximations by using dyadic piecewise linear interpolation. Here
we adopt the same idea in the capacity setting.

Let $T^{\varepsilon,m}:\ W\rightarrow G\Omega_{p}(\mathbb{R}^{d})$
be the map given by $T^{\varepsilon,m}(w)=\delta_{\varepsilon}\boldsymbol{w}^{(m)}.$
The proof of Theorem \ref{exponential approximation} essentially
consists of two parts: show that the family $\left\{ T^{\varepsilon,m}\right\} $
satisfies a $\mathrm{Cap}_{q,N}$-LDP and show that $\left\{ T^{\varepsilon,m}\right\} $
are exponentially good approximations of $\{T^{\varepsilon}\}$ under
$\mathrm{Cap}_{q,N}$. 

We first need to establish the $\mathrm{Cap}_{q,N}$-LDP for $\left\{ T^{\varepsilon,m}\right\} $,
and we begin with considering the standard finite dimensional abstract
Wiener space.

Let $\mu$ be the standard Gaussian measure on $\mathbb{R}^{n}.$
In this case, the Cameron-Martin space is just $\mathbb{R}^{n}$ equipped
with the standard Euclidean inner product. For clarity we use the
notation $\mathrm{Cap}_{q,N}^{(n)}$ to emphasize that the capacity
is defined on $\mathbb{R}^{n}$. Now we have the following result.
\begin{prop}
The family $\{\varepsilon x\}$ satisfies the $\mathrm{Cap}_{q,N}^{(n)}$-LDP
with good rate function 
\[
J(x)=\frac{|x|^{2}}{2},\ x\in\mathbb{R}^{n}.
\]
\end{prop}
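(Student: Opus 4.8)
The plan is to reduce the $\mathrm{Cap}_{q,N}^{(n)}$-LDP for $\{\varepsilon x\}$ on $\mathbb{R}^n$ to an explicit estimate on the capacity of balls and half-spaces, exploiting the fact that in finite dimensions the Sobolev norms $\|\cdot\|_{q,N}$ are completely controlled by ordinary weighted Sobolev norms with Gaussian weight, and that all the relevant test functions can be written down by hand. I would first record the elementary two-sided bound
\[
\mathbb{P}(A)^{1/q}=\mathrm{Cap}_{q,0}^{(n)}(A)\leqslant\mathrm{Cap}_{q,N}^{(n)}(A)
\]
from \eqref{relation between measure and capacity}, which immediately gives the LDP \emph{lower bound}: for an open $G\ni x_0$ one has $\mathrm{Cap}_{q,N}^{(n)}\{\varepsilon x\in G\}\geqslant \mu(\varepsilon^{-1}G)^{1/q}$, and the classical Schilder-type estimate for $\mu$ on $\mathbb{R}^n$ (or just a direct computation, since $G$ contains a ball around $x_0$) yields $\liminf_{\varepsilon\to0}\varepsilon^2\log\mathrm{Cap}_{q,N}^{(n)}\{\varepsilon x\in G\}\geqslant -\tfrac12 d(0,G)^2=-\tfrac1q\inf_G(\tfrac{q}{2}|x|^2)$. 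Wait — here one must be careful about the factor $1/q$: the normalization in the definition forces the rate function to be $J(x)=\tfrac{|x|^2}{2}$ and the lower bound reads $\geqslant-\tfrac1q\inf_G J$; since $\mu(\varepsilon^{-1}G)^{1/q}$ contributes exactly the extra $\tfrac1q$, this matches. Goodness of $J$ is trivial since sublevel sets of $|x|^2/2$ are closed balls in $\mathbb{R}^n$, hence compact.

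The substance is the \emph{upper bound}. For a closed set $C\subset\mathbb{R}^n$ and $\delta>0$, let $C_\delta=\{x:d(x,C)\leqslant\delta\}$, let $r=d(0,C)$, and pick a smooth cutoff $\varphi_\varepsilon(x)=\psi\big((|x|-(r-\delta)/\varepsilon)/\delta\big)$-type function, more precisely a test function $u_\varepsilon\in\mathbb{D}_N^q(\mathbb{R}^n)$ with $u_\varepsilon\geqslant1$ on the open $\delta/\varepsilon$-neighbourhood of $\varepsilon^{-1}C$, $u_\varepsilon\geqslant0$ everywhere, $\mathrm{supp}\,u_\varepsilon\subset\{|x|\geqslant(r-2\delta)/\varepsilon\}$, and $\|D^i u_\varepsilon\|_\infty\leqslant C(\delta)\varepsilon^{-i}$ for $i\leqslant N$. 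By definition of capacity, $\mathrm{Cap}_{q,N}^{(n)}\{\varepsilon x\in C\}\leqslant\|u_\varepsilon\|_{q,N}$, and
\[
\|u_\varepsilon\|_{q,N}^q=\sum_{i=0}^N\int_{\mathbb{R}^n}|D^i u_\varepsilon(x)|^q\,d\mu(x)\leqslant C(\delta,N)\,\varepsilon^{-Nq}\,\mu\{|x|\geqslant(r-2\delta)/\varepsilon\}.
\]
The polynomial prefactor $\varepsilon^{-Nq}$ is killed on the $\varepsilon^2\log$ scale, while the Gaussian tail gives $\mu\{|x|\geqslant(r-2\delta)/\varepsilon\}\leqslant C\exp(-(r-2\delta)^2/(2\varepsilon^2))$. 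Hence $\varepsilon^2\log\mathrm{Cap}_{q,N}^{(n)}\{\varepsilon x\in C\}\leqslant -\tfrac1q\cdot\tfrac{(r-2\delta)^2}{2}+o(1)$, and letting $\delta\to0$ gives exactly $-\tfrac1q\inf_C J$, since $\inf_C J=\tfrac12 d(0,C)^2$.

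The main obstacle is purely technical: constructing the cutoff $u_\varepsilon$ with simultaneous control of all derivatives up to order $N$ and the right support, and checking it genuinely lies in $\mathbb{D}_N^q(\mathbb{R}^n)=\mathbb{D}_N^q$ (it does, being smooth with bounded derivatives and compactly supported). One should note the asymmetry in the standard capacity definition: $u\geqslant1$ is required only on an \emph{open} set containing $C$, so I would apply the definition to an open neighbourhood $O\supset C$ first, then use the outer-regularity clause $\mathrm{Cap}_{q,N}^{(n)}(C)=\inf\{\mathrm{Cap}_{q,N}^{(n)}(O):O\text{ open},C\subset O\}$ together with the fact that the estimate above is uniform over $O$ sandwiched between $C$ and $C_\delta$. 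Everything else — subadditivity, monotonicity, and the Gaussian tail asymptotics — is routine. The finite-dimensionality is essential in two places: it makes $|x|^2/2$ a good rate function, and it lets one write test functions explicitly rather than invoking abstract Malliavin machinery.
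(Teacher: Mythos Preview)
Your argument is correct and genuinely different from the paper's. For the lower bound you both do the same thing via \eqref{relation between measure and capacity}. For the upper bound, the paper proceeds in four stages: it first uses the \emph{exponential} test function $f(x)=\exp(\lambda\varepsilon x-\lambda b)$ to estimate the capacity of a one-dimensional half-line, then the analogous function $f(x)=\exp(\langle\lambda,\varepsilon x\rangle+|\lambda|r-\langle\lambda,a\rangle)$ for an open ball in $\mathbb{R}^n$ (optimising over $\lambda$ in Cram\'er fashion), then passes to compact sets by finite ball covers, and finally to arbitrary closed sets via an exponential-tightness truncation using the one-dimensional estimate. Your route is shorter: you exploit the single observation that every closed $C$ is contained in $\{|y|\geqslant d(0,C)\}$, so it suffices to estimate the capacity of the exterior of a ball about the origin, which you do with one radial cutoff and a Gaussian tail bound. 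This avoids the covering-and-tightness machinery entirely; the price is that the paper's exponential test functions have Sobolev norms that can be computed in closed form, while your cutoff needs derivative bookkeeping.

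Two small corrections that do not affect validity. First, your $u_\varepsilon$ is \emph{not} compactly supported (it equals $1$ at infinity); membership in $\mathbb{D}_N^q$ follows instead from boundedness together with bounded derivatives of all orders up to $N$. Second, with the cutoff $u_\varepsilon(x)=\psi\bigl((|x|-(r-\delta)/\varepsilon)/\delta\bigr)$ you actually get $\|D^i u_\varepsilon\|_\infty\leqslant C(\delta,r)$ uniformly in small $\varepsilon$ (the factor from $\nabla|x|$ has norm $1$, and higher derivatives of $|x|$ contribute $|x|^{-1}\lesssim\varepsilon$ on the support), so your stated bound $C(\delta)\varepsilon^{-i}$ is a large overestimate --- harmless on the $\varepsilon^2\log$ scale, but worth noting.
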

\begin{proof}
The lower bound follows immediately from the simple relation in (\ref{relation between measure and capacity})
and the classical LDP for the family $\{\mu\left(\varepsilon^{-1}dx\right)\}$
of probability measures. It suffices to establish the upper bound.

We first prove the following inequality for the one dimensional case:
\begin{equation}
\limsup_{\varepsilon\rightarrow0}\varepsilon^{2}\log\mathrm{Cap}_{q,N}^{(1)}\{x:\ \varepsilon x>b\}\leqslant-\frac{1}{2q}b^{2},\label{one dimensional case for half interval}
\end{equation}
where $b>0.$ In fact, for any $\lambda>0,$ define the non-negative
function 
\[
f(x)=\mathrm{e}^{\lambda\varepsilon x-\lambda b},\ x\in\mathbb{R}^{1}.
\]
Obviously $f\in\mathbb{D}_{N}^{q}$, and $f\geqslant1$ on $\{x:\ \varepsilon x>b\}.$
Therefore, by the definition of capacity we have
\begin{eqnarray*}
\mathrm{Cap}_{q,N}^{(1)}\{x:\ \varepsilon x>b\} & \leqslant & \|f\|_{q,N}\\
 & \leqslant & \sum_{i=0}^{N}\left(\int_{\mathbb{R}^{1}}\left|f^{(i)}\right|^{q}\mu(dx)\right)^{\frac{1}{q}}\\
 & = & \sum_{i=0}^{N}\left(\int_{\mathbb{R}^{1}}(\lambda\varepsilon)^{qi}\mathrm{e}^{q\lambda\varepsilon x-q\lambda}\frac{1}{\sqrt{2\pi}}\mathrm{e}^{-\frac{x^{2}}{2}}dx\right)^{\frac{1}{q}}\\
 & = & \sum_{i=0}^{N}(\lambda\varepsilon)^{i}\mathrm{e}^{\frac{q}{2}(\lambda\varepsilon)^{2}-\lambda b}.
\end{eqnarray*}
It follows that 
\[
\varepsilon^{2}\log\mathrm{Cap}_{q,N}^{(1)}\{x:\ \varepsilon x>b\}\leqslant\varepsilon^{2}\log N+\max_{0\leqslant i\leqslant N}\left\{ i\varepsilon^{2}\log(\lambda\varepsilon)\right\} +\frac{q}{2}(\lambda\varepsilon^{2})^{2}-\lambda\varepsilon^{2}b.
\]
Now take $\lambda=b/(q\varepsilon^{2})$, then we have 
\[
\varepsilon^{2}\log\mathrm{Cap}_{q,N}^{(1)}\{x:\ \varepsilon x>b\}\leqslant\varepsilon^{2}\log N+\max_{0\leqslant i\leqslant N}\left\{ i\varepsilon^{2}\log\left(\frac{b}{q\varepsilon}\right)\right\} -\frac{b^{2}}{2q},
\]
and therefore (\ref{one dimensional case for half interval}) holds.
Apparently (\ref{one dimensional case for half interval}) still holds
if $\{x:\ \varepsilon x>b\}$ is replaced by $\{x:\ \varepsilon x\geqslant b\},$
and a similar inequality holds for $\{x:\ \varepsilon x\leqslant a\}$
for $a<0.$

Now we come back to the $n$-dimensional case. 

Firstly, consider an open ball $B(a,r)\subset\mathbb{R}^{n}.$ For
any $\lambda\in\mathbb{R}^{n},$ consider the non-negative function
\[
f(x)=\mathrm{e}^{\langle\lambda,\varepsilon x\rangle+|\lambda|r-\langle\lambda,a\rangle},\ x\in\mathbb{R}^{n}.
\]
Then apparently we have $f\in\mathbb{D}_{N}^{q}$. Moreover, from
the fact that 
\[
\langle\lambda,a\rangle-|\lambda|r=\inf_{y\in B(a,r)}\langle\lambda,y\rangle,
\]
we have $f\geqslant1$ on $\{x:\ \varepsilon x\in B(a,r)\}.$ Therefore,
similarly as before we have 
\begin{eqnarray*}
\mathrm{Cap}_{q,N}^{(n)}\{x:\ \varepsilon x\in B(a,r)\} & \leqslant & \|f\|_{q,N}\\
 & \leqslant & \sum_{i=0}^{N}\left(\int_{\mathbb{R}^{n}}\left|D^{i}f\right|^{q}\mu(dx)\right)^{\frac{1}{q}}\\
 & \leqslant & \sum_{i=0}^{N}(n|\lambda|\varepsilon)^{i}\mathrm{e}^{\frac{1}{2}q(|\lambda|\varepsilon)^{2}+|\lambda|r-\langle\lambda,a\rangle}
\end{eqnarray*}
and 
\begin{eqnarray*}
 &  & \varepsilon^{2}\log\mathrm{Cap}_{q,N}^{(n)}\{x:\ \varepsilon x\in B(a,r)\}\\
 & \leqslant & \varepsilon^{2}\log N+\max_{0\leqslant i\leqslant N}\left\{ i\varepsilon^{2}\log(n|\lambda\varepsilon|)\right\} +\frac{q}{2}\left(|\lambda|\varepsilon^{2}\right)^{2}\\
 &  & +|\lambda|\varepsilon^{2}r-\langle\varepsilon^{2}\lambda,a\rangle.
\end{eqnarray*}
Note that the function $\frac{q}{2}\left(|\lambda|\varepsilon^{2}\right)^{2}+|\lambda|\varepsilon^{2}r-\langle\varepsilon^{2}\lambda,a\rangle$
attains its minimum at 
\[
\lambda=\frac{(|a|-r)^{+}}{q\varepsilon^{2}|a|}a,
\]
By taking this $\lambda$ and letting $\varepsilon\rightarrow0$,
we arrive at 
\[
\limsup_{\varepsilon\rightarrow0}\varepsilon^{2}\log\mathrm{Cap}_{q,N}^{(n)}\{x:\ \varepsilon x\in B(a,r)\}\leqslant-\frac{1}{2q}\left((|a|-r)^{+}\right)^{2}=-\frac{1}{q}\inf_{y\in B(a,r)}J(y).
\]

Secondly, let $K$ be a compact subset of $\mathbb{R}^{n}.$ Then
for any $\delta>0,$ we can find a finite cover of $K$ by open balls
$\{B(a_{i},\delta)\}_{1\leqslant i\leqslant k(\delta)}$ where each
$a_{i}\in K.$ It follows that
\begin{align*}
 & \limsup_{\varepsilon\rightarrow0}\varepsilon^{2}\log\mathrm{Cap}_{q,N}\{x:\ \varepsilon x\in K\}\\
\leqslant & \limsup_{\varepsilon\rightarrow0}\varepsilon^{2}\left(\log k(\delta)+\max_{1\leqslant i\leqslant k(\delta)}\log\mathrm{Cap}_{q,N}^{(n)}\{x:\ \varepsilon x\in B(a_{i},\delta)\}\right)\\
\leqslant & \max_{1\leqslant i\leqslant k(\delta)}\left(-\frac{1}{q}\inf_{y\in B(a_{i},\delta)}J(y)\right)\\
\leqslant & -\frac{1}{q}\inf_{y\in B(K,\delta)}J(y),
\end{align*}
where $B(K,\delta):=\{x:\ \mathrm{dist}(x,K)<\delta\}.$ By letting
$\delta\rightarrow0$ we obtain the upper bound result for the compact
set $K.$ 

Finally, let $C$ be an arbitrary closed subset of $\mathbb{R}^{n}.$
For $\rho>0,$ let 
\[
H_{\rho}=\{x:\ \left|x^{i}\right|\leqslant\rho\ \mathrm{for\ all}\ i\}.
\]
Then we have 
\[
\mathrm{Cap}_{q,N}^{(n)}\{x:\ \varepsilon x\in C\}\leqslant\mathrm{Cap}_{q,N}^{(n)}\{x:\ \varepsilon x\in C\bigcap H_{\rho}\}+\sum_{i=1}^{n}\mathrm{Cap}_{q,N}^{(n)}\{x:\ \varepsilon\left|x^{i}\right|>\rho\}.
\]
On the other hand, from the definition of capacity, we have (see also
the proof of the following Corollary \ref{Cap-LDP for T^epsilon,m}):
\[
\mathrm{Cap}_{q,N}^{(n)}\{x:\ \varepsilon\left|x^{i}\right|>\rho\}\leqslant\mathrm{Cap}_{q,N}^{(1)}\{x\in\mathbb{R}^{1}:\ \varepsilon|x|>\rho\}.
\]
Combining with the upper bound result for compact sets and (\ref{one dimensional case for half interval}),
we arrive at
\[
\limsup_{\varepsilon\rightarrow0}\varepsilon^{2}\log\mathrm{Cap}_{q,N}^{(n)}\{x:\ \varepsilon x\in C\}\leqslant\max\left\{ -\frac{1}{q}\inf_{y\in C\bigcap H_{\rho}}J(y),-\frac{1}{q}\rho^{2}\right\} ,\ \forall\rho>0.
\]
The upper bound result for $C$ follows from letting $\rho\rightarrow\infty.$
\end{proof}

Now consider the situation where $\nu$ is a general non-degenerate
Gaussian measure on $\mathbb{R}^{n}$ with covariance matrix $\Sigma.$
In this case the Cameron-Martin space $\mathcal{H}=\mathbb{R}^{n}$
but with inner product 
\[
\langle h_{1},h_{2}\rangle=h_{1}^{T}\Sigma^{-1}h_{2}.
\]
Moreover, the Cameron-Martin embedding $\iota:\ \mathcal{H}\rightarrow\mathbb{R}^{n}$
is just the identity map but the dual embedding $\iota^{*}:\ \mathbb{R}^{n}\rightarrow\mathcal{H}^{*}\cong\mathcal{H}$
is given by 
\[
\iota^{*}(\lambda)=\Sigma\lambda,\ \lambda\in\mathbb{R}^{n}.
\]
Therefore, if we write $\Sigma=QQ^{T}$ for some non-degenerate matrix
$Q$, it follows from the definition of Sobolev spaces and change
of variables that 
\[
\mathrm{Cap}_{q,N}^{\nu}(A)=\mathrm{Cap}_{q,N}^{\mu}\left(Q^{-1}A\right),\ \forall A\subset\mathbb{R}^{n},
\]
where the L.H.S. is the capacity for $\nu$ and the R.H.S. is the
capacity for the standard Gaussian measure $\mu.$ In other words,
capacities for non-degenerate Gaussian measures on $\mathbb{R}^{n}$
are all equivalent. As a consequence, we conclude that the family
$\{\varepsilon x\}$ satisfies the $\mathrm{Cap}_{q,N}^{\nu}$-LDP
with good rate function 
\[
J(y)=\frac{1}{2}\|y\|_{\mathcal{H}}^{2}=\frac{1}{2}y^{T}\Sigma^{-1}y,\ y\in\mathbb{R}^{n}.
\]
The case of degenerate Gaussian measures follows easily by restriction
on the maximal invariant subspace on which the covariance matrix is
positive definite.

A direct consequence of the previous discussion is the following.
\begin{cor}
\label{Cap-LDP for T^epsilon,m}For each $m\geqslant1,$ the family
$\{T^{\varepsilon,m}\}$ satisfies the $\mathrm{Cap}_{q,N}$-LDP with
good rate function
\begin{equation}
I_{m}(\boldsymbol{w})=\inf\left\{ J_{m}(x):\ x\in\left(\mathbb{R}^{d}\right)^{2^{m}}:\ \Phi_{m}(x)=\boldsymbol{w}\right\} ,\ \boldsymbol{w}\in G\Omega_{p}\left(\mathbb{R}^{d}\right),\label{Rate function in the discrete case}
\end{equation}
where $J_{m}(x)$ is the good rate function for the Gaussian measure
$\nu_{m}$ on $\left(\mathbb{R}^{d}\right)^{2^{m}}$ induced by $\left(w_{t_{m}^{1}},\cdots,w_{t_{m}^{2^{m}}}\right),$
and $\Phi_{m}$ is the map sending each $x\in\left(\mathbb{R}^{d}\right)^{2^{m}}$
to the lifting of the dyadic piecewise linear interpolation associated
with $x.$\end{cor}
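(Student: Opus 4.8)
The plan is to reduce the statement to the finite-dimensional Gaussian space $\bigl((\mathbb{R}^d)^{2^m},\mathcal{H}_m,\nu_m\bigr)$ treated just above, exploiting that $T^{\varepsilon,m}(w)$ depends on $w$ only through its values at the dyadic times $t_m^k$. Write $P_m:W\to(\mathbb{R}^d)^{2^m}$ for the evaluation map $P_m(w)=(w_{t_m^1},\dots,w_{t_m^{2^m}})$, so that $\mathbb{P}\circ P_m^{-1}=\nu_m$. Since the dyadic piecewise linear interpolation of $w$, hence its lift $\boldsymbol{w}^{(m)}$, is a function of $P_m(w)$ alone, and since the level-$k$ component of the lift of $\varepsilon w^{(m)}$ equals $\varepsilon^{k}$ times that of $w^{(m)}$ --- precisely the action of $\delta_\varepsilon$ --- I would first record the identity
\[
T^{\varepsilon,m}(w)=\delta_\varepsilon\boldsymbol{w}^{(m)}=\Phi_m\bigl(\varepsilon P_m(w)\bigr),
\]
where $\Phi_m$ is continuous for $d_p$ (the iterated integrals of a piecewise linear path are polynomials in the increments). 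Consequently $\{w:T^{\varepsilon,m}(w)\in A\}=P_m^{-1}\bigl(\{x:\varepsilon x\in\Phi_m^{-1}(A)\}\bigr)$, and $\Phi_m^{-1}(A)$ is closed (resp. open) when $A$ is. Moreover, directly from the definition of $I_m$ one has $\inf_{x\in\Phi_m^{-1}(A)}J_m(x)=\inf_{\boldsymbol{w}\in A}I_m(\boldsymbol{w})$ for every $A$, and $I_m$ is a good rate function by the classical contraction principle applied to the good rate function $J_m$ and the continuous map $\Phi_m$.

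The technical heart of the argument is the capacity comparison
\[
\mathrm{Cap}_{q,N}\bigl(P_m^{-1}(A)\bigr)\leqslant\mathrm{Cap}_{q,N}^{\nu_m}(A),\qquad A\subset(\mathbb{R}^d)^{2^m}.
\]
I would establish it by noting that, under the canonical identifications, $P_m$ restricts on $\mathcal{H}$ to the orthogonal projection onto a finite-dimensional subspace isometric to the Cameron--Martin space of $\nu_m$; hence for $u\in\mathcal{S}\bigl((\mathbb{R}^d)^{2^m}\bigr)$ the pullback $u\circ P_m$ lies in $\mathcal{S}(W)$, its Malliavin derivatives transform isometrically, and, because $\mathbb{P}\circ P_m^{-1}=\nu_m$, one gets $\|u\circ P_m\|_{q,N}=\|u\|_{q,N}$; this extends to $\mathbb{D}^q_N$ by density. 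Then, given an open $O\supset A$ and an admissible test function $u$ for $O$ on $(\mathbb{R}^d)^{2^m}$, the function $u\circ P_m$ is admissible for the open set $P_m^{-1}(O)\supset P_m^{-1}(A)$ in $W$, so $\mathrm{Cap}_{q,N}\bigl(P_m^{-1}(O)\bigr)\leqslant\mathrm{Cap}_{q,N}^{\nu_m}(O)$; taking the infimum over $O$ and noting that the sets $P_m^{-1}(O)$ are themselves open neighbourhoods of $P_m^{-1}(A)$ yields the claim. This is the abstract version of the elementary inequality $\mathrm{Cap}_{q,N}^{(n)}\{x:\varepsilon|x^i|>\rho\}\leqslant\mathrm{Cap}_{q,N}^{(1)}\{x:\varepsilon|x|>\rho\}$ used in the proof of the preceding Proposition.

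With these ingredients both halves of the $\mathrm{Cap}_{q,N}$-LDP follow at once. For the upper bound, if $C$ is closed then $\mathrm{Cap}_{q,N}\{T^{\varepsilon,m}\in C\}\leqslant\mathrm{Cap}_{q,N}^{\nu_m}\{x:\varepsilon x\in\Phi_m^{-1}(C)\}$, and applying the $\mathrm{Cap}_{q,N}^{\nu_m}$-LDP for $\{\varepsilon x\}$ on $(\mathbb{R}^d)^{2^m}$ (already established) to the closed set $\Phi_m^{-1}(C)$ gives $\limsup_{\varepsilon\to0}\varepsilon^2\log\mathrm{Cap}_{q,N}\{T^{\varepsilon,m}\in C\}\leqslant-\tfrac{1}{q}\inf_{\Phi_m^{-1}(C)}J_m=-\tfrac{1}{q}\inf_C I_m$. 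For the lower bound, if $G$ is open I would instead use the inequality (\ref{relation between measure and capacity}) together with $\mathbb{P}\circ P_m^{-1}=\nu_m$ to get $\mathrm{Cap}_{q,N}\{T^{\varepsilon,m}\in G\}\geqslant\nu_m\{x:\varepsilon x\in\Phi_m^{-1}(G)\}^{1/q}$, and then invoke the classical finite-dimensional Gaussian LDP on the open set $\Phi_m^{-1}(G)$ to conclude $\liminf_{\varepsilon\to0}\varepsilon^2\log\mathrm{Cap}_{q,N}\{T^{\varepsilon,m}\in G\}\geqslant-\tfrac{1}{q}\inf_{\Phi_m^{-1}(G)}J_m=-\tfrac{1}{q}\inf_G I_m$.

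The only genuinely non-formal point is the capacity comparison inequality --- that pulling a Sobolev functional back along the finite-dimensional coordinate projection $P_m$ does not increase its $(q,N)$-Sobolev norm --- which is where the compatibility of the Malliavin calculus with finite-dimensional projections of an abstract Wiener space enters; I would write this step out in full, while the rest is bookkeeping together with the finite-dimensional results already in hand.
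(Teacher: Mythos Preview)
Your proposal is correct and follows essentially the same route as the paper. The paper phrases the argument as first establishing the $\mathrm{Cap}_{q,N}$-LDP for the intermediate family $\varepsilon\pi_m:W\to(\mathbb{R}^d)^{2^m}$ (your $\varepsilon P_m$) and then invoking the contraction principle (Theorem~\ref{contraction principle}) with the continuous map $\Phi_m$, whereas you inline the contraction step by working directly with the preimages $\Phi_m^{-1}(C)$ and $\Phi_m^{-1}(G)$; but the substantive content --- the capacity comparison $\mathrm{Cap}_{q,N}(P_m^{-1}A)\leqslant\mathrm{Cap}_{q,N}^{\nu_m}(A)$ obtained by pulling back Sobolev test functions along $P_m$ and using $\|u\circ P_m\|_{q,N}=\|u\|_{q,N;\nu_m}$, together with (\ref{relation between measure and capacity}) for the lower bound --- is identical to what the paper does.
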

\begin{proof}
Since $\Phi_{m}$ is continuous under the Euclidean and $p$-variation
topology respectively, the result follows immediately from the contraction
principle (Theorem \ref{contraction principle}) once we have established
the $\mathrm{Cap}_{q,N}$-LDP for the family $\varepsilon\pi_{m}:\ W\rightarrow\left(\mathbb{R}^{d}\right)^{2^{m}}$
where $\pi^{m}$ is defined by 
\[
\pi_{m}(w)=\left(w_{t_{m}^{1}},\cdots,w_{t_{m}^{2^{m}}}\right),\ w\in W,
\]
with good rate function $J_{m}.$

To see this, first notice again that the lower bound follows from
the relation (\ref{relation between measure and capacity}) and the
classical LDP for finite dimensional Gaussian measures. Moreover,
let $U$ be an open subset of $\left(\mathbb{R}^{d}\right)^{2^{m}}$
and let $f\in\mathbb{D}_{N}^{q}\left(\nu_{m}\right)$ be a function
such that for $\nu_{m}$-almost surely 
\[
f\geqslant1\ \mathrm{on}\ U,\ f\geqslant0\ \mathrm{on}\ \left(\mathbb{R}^{d}\right)^{2^{m}},
\]
$\mathbb{D}_{N}^{q}\left(\nu_{m}\right)$ is the Sobolev space over
$\left(\mathbb{R}^{d}\right)^{2^{m}}$ associated with $\nu_{m}$.
Define 
\[
g(w)=f\left(w_{t_{m}^{1}},\cdots,w_{t_{m}^{2^{m}}}\right),\ w\in W.
\]
Apparently $g\in\mathbb{D}_{N}^{q}$, and for $\mathbb{P}$-almost
surely
\[
g\geqslant1\ \mathrm{on}\ \pi_{m}^{-1}U,\ g\geqslant0\ \mathrm{on}\ W.
\]
Moreover, since $\|g\|_{q,N}=\|f\|_{q,N;\nu_{m}}$, we know that 
\[
\mathrm{Cap}_{q,N}\left(\pi_{m}^{-1}U\right)\leqslant\|f\|_{q,N;\nu_{m}}.
\]
By taking infimum over all such $f,$ we obtain 
\[
\mathrm{Cap}_{q,N}\left(\pi_{m}^{-1}U\right)\leqslant\mathrm{Cap}_{q,N}^{\nu_{m}}(U).
\]
Now the upper bound result follows from the $\mathrm{Cap}_{q,N}$-LDP
for the family $\{\nu_{m,\varepsilon}:=\nu_{m}(\varepsilon^{-1}dx)\}$
of probability measure and a simple limiting argument.
\end{proof}

\begin{rem}
There is an equivalent way of expressing the rate function $I_{m},$
which is very convenient for us to prove our main result of Theorem
\ref{LDP for capacity}. In fact, from classical LDP results for Gaussian
measures (see for example \cite{DS}), we know that the family $\left\{ \mathbb{P}_{\varepsilon}:=\mathbb{P}\left(\varepsilon^{-1}dw\right)\right\} $
of probability measures on $W$ satisfies the LDP with good rate function
$\Lambda$ given by (\ref{original rate function}). Moreover, the
map $\Psi_{m}:\ W\rightarrow G\Omega_{p}(\mathbb{R}^{d})$ defined
by $\Psi_{m}(w)=\boldsymbol{w}^{(m)}$ is continuous under the uniform
and $p$-variation topology respectively. Therefore, according to
the classical contraction principle, the family $\{\mathbb{P}_{\varepsilon}\circ\Psi_{m}^{-1}\}$
of probability measures on $G\Omega_{p}\left(\mathbb{R}^{d}\right)$
satisfies the LDP with good rate function 
\begin{equation}
I_{m}'(\boldsymbol{w})=\inf\left\{ \Lambda(w):\ w\in W,\ \Psi_{m}(w)=\boldsymbol{w}\right\} ,\ \boldsymbol{w}\in G\Omega_{p}\left(\mathbb{R}^{d}\right).\label{approximating rate function}
\end{equation}
On the other hand, the same argument implies that the family $\left\{ \nu_{m,\varepsilon}\circ\Phi_{m}^{-1}\right\} $
of probability measures on $G\Omega_{p}\left(\mathbb{R}^{d}\right)$
satisfies the LDP with good rate function $I_{m}$ given by (\ref{Rate function in the discrete case}).
Observe that $\mathbb{P}_{\varepsilon}\circ\Psi_{m}^{-1}=\nu_{m,\varepsilon}\circ\Phi_{m}^{-1}.$
By the uniqueness of rate functions (see \cite{DZ}, Chapter 4, Lemma
4.1.4), we conclude that $I_{m}=I_{m}'.$
\end{rem}

\begin{rem}
Of course we can apply Yoshida's result directly with the contraction
principle to obtain the $\mathrm{Cap}_{q,N}$-LDP for the family $\left\{ T^{\varepsilon,m}\right\} $
with good rate function $I'_{m}.$ Here we do not proceed in this
way so that in the end our result yields Yoshida's one as a corollary,
and our proof relies only on basic properties of capacities and finite
dimensional Gaussian spaces.
\end{rem}

The second main ingredient of proving Theorem \ref{LDP for capacity}
is the following.
\begin{lem}
\label{Key Lemma for LDP}For any $q>1,N\in\mathbb{N}$ and $\lambda>0$,
we have 
\[
\lim_{m\rightarrow\infty}\limsup_{\varepsilon\rightarrow0}\varepsilon^{2}\log\mathrm{Cap}_{q,N}\left\{ w:\ d_{p}\left(\delta_{\varepsilon}\boldsymbol{w}^{(m)},\delta_{\varepsilon}\boldsymbol{w}\right)>\lambda\right\} =-\infty.
\]
Therefore, $\{T^{\varepsilon,m}\}$ are exponentially good approximations
of $\{T^{\varepsilon}\}$ under $\mathrm{Cap}_{q,N}$.\end{lem}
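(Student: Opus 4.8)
The plan is to reduce, via Lemma~\ref{control of p-var metric as lemma}, to capacity estimates for the functionals $\rho_i$; to upgrade the polynomial tail bounds of Lemma~\ref{estimating tail events} to stretched‑exponential ones by optimising over the order of Malliavin differentiation; and then to handle the nonlinear product terms by truncating the ``bounded'' factor at an auxiliary level that is sent to infinity only at the very end. For the reduction, I use the dilation‑homogeneity $\rho_i(\delta_\varepsilon\boldsymbol{u},\delta_\varepsilon\boldsymbol{v})=\varepsilon^{i}\rho_i(\boldsymbol{u},\boldsymbol{v})$: applying Lemma~\ref{control of p-var metric as lemma} to $\delta_\varepsilon\boldsymbol{w}^{(m)}$ and $\delta_\varepsilon\boldsymbol{w}$ in the form \eqref{rewritting control of d_p} shows that $\{d_p(\delta_\varepsilon\boldsymbol{w}^{(m)},\delta_\varepsilon\boldsymbol{w})>\lambda\}$ lies in the union, over the finitely many triples $(i,j,k)\in\mathbb{N}\times\mathbb{N}\times\mathbb{Z}_{+}$ with $i+jk\le 3$, of the events
\[
A^{\varepsilon,m}_{i,j,k}=\Big\{w:\ \varepsilon^{\,i+jk}\,\rho_i\big(\boldsymbol{w}^{(m)},\boldsymbol{w}\big)\,\big(\rho_j(\boldsymbol{w}^{(m)})+\rho_j(\boldsymbol{w})\big)^{k}>\lambda/C_0\Big\},
\]
with $C_0$ depending only on $d,p,\gamma$. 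By sub‑additivity of $\mathrm{Cap}_{q,N}$ it then suffices to bound $\limsup_{\varepsilon\to0}\varepsilon^{2}\log\mathrm{Cap}_{q,N}(A^{\varepsilon,m}_{i,j,k})$ by a quantity tending to $-\infty$ as $m\to\infty$.

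The analytic heart is the conversion of polynomial to exponential bounds. Write $a:=h-\frac{\theta+1}{p}>0$ for $\theta$ as in Lemma~\ref{estimating tail events}. Summing the increment estimate \eqref{first estimate} over $l\ge m$ against a geometric weight of ratio $2^{-\delta}$ with $0<\delta<a$ (so that the series converges for all large $\tilde N$), together with $\rho_i(\boldsymbol{w}^{(m)},\boldsymbol{w})\le\sum_{l\ge m}\rho_i(\boldsymbol{w}^{(l+1)},\boldsymbol{w}^{(l)})$ — replaced by the $\ell^{p/3}$‑subadditivity of $\rho_3$ when $p<3$, at no cost — gives, for every $\tilde N\in\mathbb{N}$ beyond the range of Lemma~\ref{estimating tail events}, a bound $\mathrm{Cap}_{q,N}\{\rho_i(\boldsymbol{w}^{(m)},\boldsymbol{w})>r\}\le C_1 g(\tilde N;N)2^{m}(C_3\tilde N^{\,i}r^{-2}2^{-2iam})^{\tilde N}$; from \eqref{second estimate} (and the case $m=1$ of the previous bound) one likewise gets $\mathrm{Cap}_{q,N}\{\rho_j(\boldsymbol{w}^{(m)})>r\}$ and $\mathrm{Cap}_{q,N}\{\rho_j(\boldsymbol{w})>r\}$ both $\le C_1 g(\tilde N;N)(C_3'\tilde N^{\,j}r^{-2})^{\tilde N}$. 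I would then minimise over the integer $\tilde N$: since $\min_{\tilde N>0}(\beta\tilde N^{\,i})^{\tilde N}=\exp(-ie^{-1}\beta^{-1/i})$ and the polynomial prefactors and the factor $2^{m}$ disappear under $\varepsilon^{2}\log(\cdot)$, this yields stretched‑exponential estimates
\[
\mathrm{Cap}_{q,N}\big\{\rho_i(\boldsymbol{w}^{(m)},\boldsymbol{w})>r\big\}\le Q_i(r)\,2^{m}\exp\!\big(-\kappa_i\,r^{2/i}\,2^{2am}\big),\qquad
\mathrm{Cap}_{q,N}\big\{\rho_j(\boldsymbol{w}^{(m)})>r\big\}\vee\mathrm{Cap}_{q,N}\big\{\rho_j(\boldsymbol{w})>r\big\}\le Q_j(r)\exp\!\big(-\kappa_j\,r^{2/j}\big),
\]
with $\kappa_i,\kappa_j>0$ and $Q_i,Q_j$ polynomial (with $m$‑dependent coefficients, harmless under $\varepsilon^{2}\log$). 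This is the capacity analogue of the Gaussian‑chaos exponential integrability used in the classical arguments (Ledoux--Qian--Zhang, Millet--Sanz‑Solé), and is exactly why Lemma~\ref{estimating tail events} records the explicit $\tilde N$‑dependence of its constants.

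Assembly then proceeds as follows. For the $k=0$ terms, $A^{\varepsilon,m}_{i}=\{\rho_i(\boldsymbol{w}^{(m)},\boldsymbol{w})>(\lambda/C_0)\varepsilon^{-i}\}$, so $\varepsilon^{2}\log\mathrm{Cap}_{q,N}(A^{\varepsilon,m}_i)\to-\kappa_i(\lambda/C_0)^{2/i}2^{2am}\to-\infty$ as $m\to\infty$. For $k>0$ I fix an auxiliary $R_0>0$ and use that, unless $\rho_j(\delta_\varepsilon\boldsymbol{w}^{(m)})>R_0$ or $\rho_j(\delta_\varepsilon\boldsymbol{w})>R_0$, the product factor in $A^{\varepsilon,m}_{i,j,k}$ is $\le(2R_0)^{k}$, whence
\[
A^{\varepsilon,m}_{i,j,k}\subset\{\rho_j(\boldsymbol{w}^{(m)})>R_0\varepsilon^{-j}\}\cup\{\rho_j(\boldsymbol{w})>R_0\varepsilon^{-j}\}\cup\{\rho_i(\boldsymbol{w}^{(m)},\boldsymbol{w})>(\lambda/C_0)(2R_0)^{-k}\varepsilon^{-i}\}.
\]
The first two pieces contribute $\limsup_{\varepsilon\to0}\varepsilon^{2}\log(\cdot)\le-\kappa_j R_0^{2/j}$, independent of $m$, and the third contributes $\le-\kappa_i\big((\lambda/C_0)(2R_0)^{-k}\big)^{2/i}2^{2am}\to-\infty$ as $m\to\infty$. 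Hence for every fixed $R_0$, $\limsup_{m\to\infty}\limsup_{\varepsilon\to0}\varepsilon^{2}\log\mathrm{Cap}_{q,N}\{d_p(\delta_\varepsilon\boldsymbol{w}^{(m)},\delta_\varepsilon\boldsymbol{w})>\lambda\}\le-\min_{j\in\{1,2\}}\kappa_j R_0^{2/j}$; since the left side does not depend on $R_0$ while the right side $\to-\infty$ as $R_0\to\infty$, the left side equals $-\infty$, which is the claim. The final assertion is then just the definition \eqref{exponential good approximation} of exponentially good approximations applied with $X=G\Omega_p(\mathbb{R}^{d})$, $d=d_p$, $T^{\varepsilon,m}=\delta_\varepsilon\boldsymbol{w}^{(m)}$, $T^{\varepsilon}=\delta_\varepsilon\boldsymbol{w}$.

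The step I expect to be the main obstacle is the treatment of the nonlinear product terms. A direct Hölder‑type split $\{FG^{k}>c\varepsilon^{-(i+jk)}\}\subset\{F>(c\varepsilon^{-(i+jk)})^{\alpha}\}\cup\{G^{k}>(c\varepsilon^{-(i+jk)})^{1-\alpha}\}$ fails, because the two resulting requirements on the exponent are $\alpha>i/(i+jk)$ and $\alpha<i/(i+jk)$, so no $\alpha$ works and at the critical value both pieces yield only a finite (not $-\infty$) limit. The fix is to split instead at the $\varepsilon$‑independent threshold $R_0$ on the ``bounded'' factor $\rho_j(\delta_\varepsilon\cdot)$, which decouples the two scalings, and to exploit that $R_0$ may be sent to infinity only after the limit in $m$ has been taken. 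The remaining points are routine: one checks that the optimal $\tilde N$ lies in the admissible range of Lemma~\ref{estimating tail events} — automatic for large thresholds, the only relevant regime since otherwise the capacity is $\le1$ — and treats $\rho_3$ as a quasi‑norm when $p<3$.
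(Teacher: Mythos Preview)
Your argument is correct and rests on the same ingredients as the paper (Lemma~\ref{control of p-var metric as lemma}, Lemma~\ref{estimating tail events} with $\tilde N\sim\varepsilon^{-2}$), but it is organised differently in two respects. First, you apply Lemma~\ref{control of p-var metric as lemma} directly to the pair $(\delta_\varepsilon\boldsymbol{w}^{(m)},\delta_\varepsilon\boldsymbol{w})$ and only afterwards telescope $\rho_i(\boldsymbol{w}^{(m)},\boldsymbol{w})$ over consecutive dyadic levels; the paper instead telescopes $d_p(\delta_\varepsilon\boldsymbol{w}^{(m)},\delta_\varepsilon\boldsymbol{w})\le\sum_{l\ge m}d_p(\delta_\varepsilon\boldsymbol{w}^{(l)},\delta_\varepsilon\boldsymbol{w}^{(l+1)})$ first and applies Lemma~\ref{control of p-var metric as lemma} to each $(l,l+1)$ piece. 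Second, and more substantively, for the product terms you truncate the bounded factor $\rho_j(\delta_\varepsilon\cdot)$ at a fixed level $R_0$ and send $R_0\to\infty$ only after the limit in $m$; the paper instead splits each $(l,l+1)$ piece at the $l$-growing threshold $2^{l\alpha/k}$ for $\rho_j(\delta_\varepsilon\boldsymbol{w}^{(l)})$, so that after summing from $l=m$ the bounded-factor contribution itself decays in $m$ and no auxiliary $R_0$ limit is needed. Your route yields reusable stretched-exponential tail bounds for $\rho_i(\boldsymbol{w}^{(m)},\boldsymbol{w})$ and $\rho_j(\boldsymbol{w})$ in the spirit of the classical probability-measure arguments, at the minor cost of having to control $\rho_j$ of the limiting rough path and of carrying one extra limit; the paper's route stays entirely at the level of finite dyadic approximations and is slightly more direct.
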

\begin{proof}
For any $\beta>0,$ since 
\begin{align*}
 & \left\{ w:\ d_{p}\left(\delta_{\varepsilon}\boldsymbol{w}^{(m)},\delta_{\varepsilon}\boldsymbol{w}\right)>\lambda\right\} \\
\subset & \left\{ w:\ \sum_{l=m}^{\infty}d_{p}\left(\delta_{\varepsilon}\boldsymbol{w}^{(l)},\delta_{\varepsilon}\boldsymbol{w}^{(l+1)}\right)>\lambda\right\} \\
\subset & \bigcup_{l=m}^{\infty}\left\{ w:\ d_{p}\left(\delta_{\varepsilon}\boldsymbol{w}^{(l)},\delta_{\varepsilon}\boldsymbol{w}^{(l+1)}\right)>\frac{\lambda}{C_{\beta}}\cdot\frac{1}{2^{(l-m)\beta}}\right\} ,
\end{align*}
we have
\begin{align*}
 & \ \mbox{Cap}_{q,N}\left\{ w:\ d_{p}\left(\delta_{\varepsilon}\boldsymbol{w}^{(m)},\delta_{\varepsilon}\boldsymbol{w}\right)>\lambda\right\} \\
\leqslant & \ \sum_{l=m}^{\infty}\mbox{Cap}_{q,N}\left\{ w:\ d_{p}\left(\delta_{\varepsilon}\boldsymbol{w}^{(l)},\delta_{\varepsilon}\boldsymbol{w}^{(l+1)}\right)>\frac{\lambda}{C_{\beta}}\cdot\frac{1}{2^{(l-m)\beta}}\right\} ,
\end{align*}
where $C_{\beta}:=\sum_{k=0}^{\infty}2^{-\beta k}.$ It then follows
from (\ref{rewritting control of d_p}) that for any $\alpha>0$,
\begin{align}
 & \ \mbox{Cap}_{q,N}\left\{ w:\ d_{p}\left(\delta_{\varepsilon}\boldsymbol{w}^{(m)},\delta_{\varepsilon}\boldsymbol{w}\right)>\lambda\right\} \nonumber \\
\leqslant & \ \sum_{i=1}^{3}\sum_{l=m}^{\infty}\mbox{Cap}_{q,N}\left\{ w:\ \rho_{i}\left(\delta_{\varepsilon}\boldsymbol{w}^{(l)},\delta_{\varepsilon}\boldsymbol{w}^{(l+1)}\right)>\frac{\lambda}{C_{d,p,\gamma,\beta}}\frac{1}{2^{(l-m)\beta}}\right\} \nonumber \\
 & \ +\sum_{\substack{i,j,k\geqslant1\\
i+jk\leqslant3
}
}\sum_{l=m}^{\infty}\left(\mbox{Cap}_{q,N}\left\{ w:\ \rho_{i}\left(\delta_{\varepsilon}\boldsymbol{w}^{(l)},\delta_{\varepsilon}\boldsymbol{w}^{(l+1)}\right)>\frac{\lambda}{C_{d,p,\gamma,\beta}}\frac{2^{m\beta}}{2^{l(\alpha+\beta)}}\right\} \right.\nonumber \\
 & \ +\mbox{Cap}_{q,N}\left\{ w:\ \rho_{j}\left(\delta_{\varepsilon}\boldsymbol{w}^{(l)}\right)>2^{\frac{l\alpha}{k}-1}\right\} \nonumber \\
 & \ +\mbox{Cap}_{q,N}\left\{ w:\ \rho_{j}\left(\delta_{\varepsilon}\boldsymbol{w}^{(l+1)}\right)>2^{\frac{l\alpha}{k}-1}\right\} ,\label{several terms}
\end{align}
where $C_{d,p,\gamma,\beta}$ is a constant depending only on $p,d,\gamma,\beta$. 

Similar to the proof of Theorem \ref{quasi-sure convergence}, we
estimate each term on the R.H.S. of (\ref{several terms}). Here we
choose $\alpha,\beta$ in exactly the same way as in the proof of
Theorem \ref{quasi-sure convergence}, namely, by (\ref{choice of alpha and beta}).
It should be pointed out that the choice of $\alpha,\beta$ can be
made independent of $\widetilde{N},$ since $\theta\in\left(\left(\frac{p(2h+1)}{6}-1\right)^{+},hp-1\right).$

Firstly, it follows from Lemma \ref{estimating tail events} that
for $i=1,2,3,$
\begin{align}
 & \ \mbox{Cap}_{q,N}\left\{ w:\ \rho_{i}\left(\delta_{\varepsilon}\boldsymbol{w}^{(l)},\delta_{\varepsilon}\boldsymbol{w}^{(l+1)}\right)>\frac{\lambda}{C_{d,p,\gamma,\beta}}\frac{1}{2^{(l-m)\beta}}\right\} \nonumber \\
= & \ \mbox{Cap}_{q,N}\left\{ w:\ \rho_{i}\left(\boldsymbol{w}^{(l)},\boldsymbol{w}^{(l+1)}\right)>\frac{\lambda}{C_{d,p,\gamma,\beta}}\frac{\varepsilon^{-i}}{2^{(l-m)\beta}}\right\} \nonumber \\
\leqslant & \ C_{1}C_{2}^{\widetilde{N}}g\left(\widetilde{N};N\right)\widetilde{N}^{i\widetilde{N}}\cdot\left(\frac{\lambda}{C_{d,p,\gamma,\beta}}\frac{\varepsilon^{-i}}{2^{(l-m)\beta}}\right)^{-2\widetilde{N}}\cdot\left(\frac{1}{2^{l}}\right)^{2i\widetilde{N}\left(h-\frac{\theta+1}{p}\right)-1}\nonumber \\
= & \ C_{1}C_{3}^{\widetilde{N}}g\left(\widetilde{N};N\right)\left(\widetilde{N}\varepsilon^{2}\right)^{i\widetilde{N}}\cdot\frac{1}{2^{2m\widetilde{N}\beta}}\left(\frac{1}{2^{l}}\right)^{2i\widetilde{N}\left(h-\frac{\theta+1}{p}\right)-1-2\widetilde{N}\beta},\label{first term}
\end{align}
where $C_{3}=C_{2}\left(\frac{\lambda}{C_{d,p,\gamma,\beta}}\right)^{-2}.$
Note that by the choice of $\beta,$ the R.H.S. of (\ref{first term})
is summable over $l,$ and it follows that 
\begin{align*}
 & \ \sum_{l=m}^{\infty}\mbox{Cap}_{q,N}\left\{ w:\ \rho_{i}\left(\delta_{\varepsilon}\boldsymbol{w}^{(l)},\delta_{\varepsilon}\boldsymbol{w}^{(l+1)}\right)>\frac{\lambda}{C_{d,p,\gamma,\beta}}\frac{1}{2^{(l-m)\beta}}\right\} \\
\leqslant & \ C_{4}C_{3}^{\widetilde{N}}g\left(\widetilde{N};N\right)\left(\widetilde{N}\varepsilon^{2}\right)^{i\widetilde{N}}\cdot\left(\frac{1}{2^{m}}\right)^{2i\widetilde{N}\left(h-\frac{\theta+1}{p}\right)-1},
\end{align*}
where $C_{4}=C_{1}\left(1-2^{-\left(2i\widetilde{N}\left(h-\frac{\theta+1}{p}\right)-1-2\widetilde{N}\beta\right)}\right)^{-1}.$
By taking $\widetilde{N}=\left[\varepsilon^{-2}\right]$ for $\varepsilon$
small enough, it is easy to see that 
\begin{align*}
 & \limsup_{\varepsilon\rightarrow0}\varepsilon^{2}\log\left(\sum_{l=m}^{\infty}\mbox{Cap}_{q,N}\left\{ w:\ \rho_{i}\left(\delta_{\varepsilon}\boldsymbol{w}^{(l)},\delta_{\varepsilon}\boldsymbol{w}^{(l+1)}\right)>\frac{\lambda}{C_{d,p,\gamma,\beta}}\frac{1}{2^{(l-m)\beta}}\right\} \right)\\
= & \log C_{3}+2i\left(h-\frac{\theta+1}{p}\right)\log\left(\frac{1}{2^{m}}\right).
\end{align*}
Therefore, we have
\begin{align*}
 & \lim_{m\rightarrow\infty}\limsup_{\varepsilon\rightarrow0}\varepsilon^{2}\log\left(\sum_{l=m}^{\infty}\mbox{Cap}_{q,N}\left\{ w:\ \rho_{i}\left(\delta_{\varepsilon}\boldsymbol{w}^{(l)},\delta_{\varepsilon}\boldsymbol{w}^{(l+1)}\right)>\frac{\lambda}{C_{d,p,\gamma,\beta}}\frac{1}{2^{(l-m)\beta}}\right\} \right)\\
= & -\infty.
\end{align*}

Again by the choice of $\alpha,\beta$ and by taking $\widetilde{N}=\left[\varepsilon^{-2}\right],$
the same computation based on Lemma \ref{estimating tail events}
yields that 
\begin{align*}
 & \lim_{m\rightarrow\infty}\limsup_{\varepsilon\rightarrow0}\varepsilon^{2}\log\left(\sum_{l=m}^{\infty}\mbox{Cap}_{q,N}\left\{ w:\ \rho_{i}\left(\delta_{\varepsilon}\boldsymbol{w}^{(l)},\delta_{\varepsilon}\boldsymbol{w}^{(l+1)}\right)>\frac{\lambda}{C_{d,p,\gamma,\beta}}\frac{2^{m\beta}}{2^{l(\alpha+\beta)}}\right\} \right)\\
= & \lim_{m\rightarrow\infty}\limsup_{\varepsilon\rightarrow0}\varepsilon^{2}\log\left(\sum_{l=m}^{\infty}\mbox{Cap}_{q,N}\left\{ w:\ \rho_{j}\left(\delta_{\varepsilon}\boldsymbol{w}^{(l)}\right)>2^{\frac{l\alpha}{k}-1}\right\} \right)\\
= & \lim_{m\rightarrow\infty}\limsup_{\varepsilon\rightarrow0}\varepsilon^{2}\log\left(\sum_{l=m}^{\infty}\mbox{Cap}_{q,N}\left\{ w:\ \rho_{j}\left(\delta_{\varepsilon}\boldsymbol{w}^{(l+1)}\right)>2^{\frac{l\alpha}{k}-1}\right\} \right)\\
= & -\infty,
\end{align*}
for $i,j,k\geqslant1$ with $i+jk\leqslant3.$ 

Now the desired result follows easily.
\end{proof}

In order to apply Theorem \ref{exponential approximation}, we need
the following convergence result in \cite{FV} for Cameron-Martin
paths.
\begin{lem}
\label{Cameron-Martin convergence} For any $\alpha>0,$ we have 
\[
\lim_{m\rightarrow\infty}\sup_{\{h\in\mathcal{H}:\ \|h\|_{\mathcal{H}}\leqslant\alpha\}}d_{p}\left(\boldsymbol{h}^{(m)},\boldsymbol{h}\right)=0.
\]
In particular, $\mathcal{H}$ is contained in $\mathcal{A}_{p}.$
\end{lem}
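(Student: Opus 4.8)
The plan is to derive this convergence from the Cameron--Martin regularity of the process recalled just above, combined with the continuity of the Young lift on bounded sets. Write $\rho:=1/(2h)$, where $h\in(1/4,1/2]$ is the memory parameter, so that $\rho\in[1,2)$; recall from \cite{FV} that the covariance $R$ has finite $\rho$-variation in the $2$D sense and that $\mathcal H$ embeds continuously into the space of paths of finite $\rho$-variation. The first step is to make this quantitative: there is a single $2$D control $\omega_R$ (given by the $\rho$-variation of $R$) such that for every $g\in\mathcal H$ and $0\leqslant s\leqslant t\leqslant 1$,
\[
\Vert g\Vert_{\rho\text{-var};[s,t]}\leqslant \Vert g\Vert_{\mathcal H}\,\omega_R\big([s,t]^2\big)^{1/2}.
\]
Thus on the ball $B_\alpha:=\{g\in\mathcal H:\Vert g\Vert_{\mathcal H}\leqslant\alpha\}$ the $\rho$-variation of every path is dominated by the \emph{same} control $\alpha^{\rho}\omega_R$, and in particular $\sup_{g\in B_\alpha}\Vert g\Vert_{\rho\text{-var};[0,1]}\leqslant R_\alpha:=\alpha\,\omega_R([0,1]^2)^{1/2}$. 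Since a control is continuous and vanishes on the diagonal, one also gets the uniform estimate $\sup_{g\in B_\alpha}\Vert g-g^{(m)}\Vert_{\infty}\leqslant 2\alpha\sup_{|t-s|\leqslant 2^{-m}}\omega_R([s,t]^2)^{1/2}\to 0$ as $m\to\infty$; and dyadic piecewise linear interpolation does not increase $\rho$-variation (up to a dimensional constant), so $\Vert g^{(m)}\Vert_{\rho\text{-var}}$ obeys the same bound.

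The second step is the interpolation estimate for the lifts. Since $\rho<2$, the iterated Young integrals defining $\boldsymbol g$ and each $\boldsymbol g^{(m)}$ exist, and the Young--Lo\`eve estimates give $|g^{i}_{s,t}|\leqslant C_{i,\rho}\Vert g\Vert_{\rho\text{-var};[s,t]}^{i}$, so all these objects lie in $G\Omega_p(\mathbb R^d)$ for the fixed $p\in(2,4)$ (indeed for any $p>\rho$). I would then invoke the following interpolation inequality: if $\boldsymbol x,\boldsymbol y\in G\Omega_p(\mathbb R^d)$ lie above paths whose $\rho$-variation is at most $R$, then
\[
d_p(\boldsymbol x,\boldsymbol y)\leqslant C_{p,\rho,R}\,\Vert x-y\Vert_{\infty}^{\kappa}
\]
for some exponent $\kappa=\kappa(p,\rho)>0$. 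This is proved level by level, interpolating the $p/i$-variation of $x^i-y^i$ between its uniform size and its $\rho/i$-variation and using the Young estimates for the difference of iterated integrals; the point is that the constant depends only on $p,\rho,R$. Applying it with $\boldsymbol x=\boldsymbol g^{(m)}$, $\boldsymbol y=\boldsymbol g$ and $R$ the uniform bound from the first step gives
\[
\sup_{g\in B_\alpha}d_p\big(\boldsymbol g^{(m)},\boldsymbol g\big)\leqslant C_{p,\rho,\alpha}\Big(\sup_{g\in B_\alpha}\Vert g-g^{(m)}\Vert_{\infty}\Big)^{\kappa},
\]
and the right-hand side tends to $0$ by the first step. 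This is exactly the claimed uniform convergence. In particular, for each fixed $h\in\mathcal H$ the sequence $(\boldsymbol h^{(m)})_m$ is $d_p$-Cauchy, hence $h\in\mathcal A_p$ and $F(h)=\lim_m\boldsymbol h^{(m)}=\boldsymbol h$, so $\mathcal H\subset\mathcal A_p$.

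I expect the main obstacle to be obtaining the interpolation estimate \emph{with constants uniform over $B_\alpha$}. This hinges on two points: (i) the quantitative Cameron--Martin embedding, which produces one control $\omega_R$ dominating the $\rho$-variation of every $g\in B_\alpha$ (using finiteness of the $2$D $\rho$-variation of $R$ together with Cauchy--Schwarz in $\mathcal H$), and (ii) Young-type continuity bounds for iterated integrals in which every constant depends only on $\rho$, $p$ and the uniform $\rho$-variation bound $R_\alpha$, never on the individual path. Once these uniform bounds and the uniform-norm convergence $g^{(m)}\to g$ are in place, the level-by-level interpolation is routine. All of this is essentially contained in \cite{FV}; I would simply assemble it in the stated form.
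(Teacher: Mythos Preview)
Your proposal is correct. The paper does not supply its own proof of this lemma but simply cites it from \cite{FV}; your outline reconstructs exactly the Friz--Victoir argument (the quantitative Cameron--Martin embedding into finite $\rho$-variation paths governed by a single control coming from the $2$D $\rho$-variation of $R$, uniform $\rho$-variation bounds for the dyadic interpolants, and interpolation/continuity of the Young lift on bounded sets), so the approaches coincide.
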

Now we are in a position to prove Theorem \ref{LDP for capacity}.

\begin{proof}[Proof of Theorem \ref{LDP for capacity}]

It suffices to show that the function $I$ given by (\ref{good rate function})
coincides with the one given by (\ref{rate function given in exponential approximation}),
and it satisfies all conditions in Theorem \ref{exponential approximation}.
Here we use $I_{m}'$ given by (\ref{approximating rate function})
for the rate function of $\{T^{\varepsilon,m}\}.$ 

Firstly, by Lemma \ref{Cameron-Martin convergence} it is easy to
see that the lifting map $F$ is continuous on each level set $\{w:\ \Lambda(w)\leqslant\alpha\}\subset\mathcal{H}\subset\mathcal{A}_{p}$
of $\Lambda.$ It follows from the definition of $I$ that 
\[
F(\{w:\ \Lambda(w)\leqslant\alpha\})=\{\boldsymbol{w}:\ I(\boldsymbol{w})\leqslant\alpha\},
\]
which then implies that $I$ is a good rate function. 

Now we show that for any closed subset $C\subset G\Omega_{p}\left(\mathbb{R}^{d}\right),$
we have 
\begin{equation}
\inf_{\boldsymbol{w}\in C}I(\boldsymbol{w})\leqslant\liminf_{m\rightarrow\infty}\inf_{\boldsymbol{w}\in C}I'_{m}(\boldsymbol{w}).\label{verifying conditions}
\end{equation}
In fact, let $\gamma_{m}=\inf_{\boldsymbol{w}\in C}I'_{m}(\boldsymbol{w})=\inf_{w\in\Psi_{m}^{-1}(C)}\Lambda(w).$
We only consider the nontrivial case $\liminf_{m\rightarrow\infty}\gamma_{m}=\alpha<\infty,$
and without loss of generality we  assume that $\lim_{m\rightarrow\infty}\gamma_{m}=\alpha.$
Since $\Lambda$ is a good rate function, we know that the infimum
over the closed subset $\Psi_{m}^{-1}(C)\subset W$ is attainable.
Therefore, there exists $w_{m}\in W$ such that $\Psi_{m}(w_{m})\in C$
and $\gamma_{m}=\Lambda(w_{m}).$ It follows from Lemma \ref{Cameron-Martin convergence}
that for any fixed $\lambda>0,$ $F(w_{m})\in C_{\lambda}$ when $m$
is large, where $C_{\lambda}:=\{\boldsymbol{w}:\ d_{p}(\boldsymbol{w},C)\leqslant\lambda\}$.
Consequently, when $m$ is large, we have 
\[
\inf_{\boldsymbol{w}\in C_{\lambda}}I(\boldsymbol{w})\leqslant I(F(w_{m}))=\Lambda(w_{m})=\gamma_{m},
\]
and hence 
\[
\inf_{\boldsymbol{w}\in C_{\lambda}}I(\boldsymbol{w})\leqslant\alpha.
\]
(\ref{verifying conditions}) then follows easily from \cite{DZ},
Chapter 4, Lemma 4.1.6. by taking $\lambda\rightarrow0$.

A direct consequence of (\ref{verifying conditions}) is the condition
(\ref{upper bound criterion in exponential approximations}) in Theorem
\ref{exponential approximation}. Moreover, if we let $C=\overline{B_{\boldsymbol{w},\lambda}}$
in (\ref{verifying conditions}), by taking $\lambda\rightarrow0$
we easily obtain that $I(\boldsymbol{w})\leqslant\overline{I}(\boldsymbol{w})$,
where $\overline{I}$ is the function given by (\ref{rate function given in exponential approximation}). 

It remains to show that $\overline{I}(\boldsymbol{w})\leqslant I(\boldsymbol{w})$,
and we only consider the nontrivial case $I(\boldsymbol{w})=\alpha<\infty.$
It follows that $I(\boldsymbol{w})=\Lambda(w),$ where $w\in\mathcal{H}\subset\mathcal{A}_{p}$
with $F(w)=\boldsymbol{w}.$ Let $\boldsymbol{w}_{m}=\Psi_{m}(w).$
By Lemma \ref{Cameron-Martin convergence} we know that $\boldsymbol{w}_{m}\rightarrow\boldsymbol{w}$
under $d_{p}.$ Therefore, for any fixed $\lambda>0$, 
\[
\inf_{\boldsymbol{w}'\in B_{\boldsymbol{w},\lambda}}I'_{m}(\boldsymbol{w}')\leqslant I'_{m}(\boldsymbol{w}_{m})\leqslant\Lambda(w)=I(\boldsymbol{w})
\]
when $m$ is large. By taking ``$\liminf_{m\rightarrow\infty}$''
and ``$\sup_{\lambda>0}$'', we obtain that $\overline{I}(\boldsymbol{w})\leqslant I(\boldsymbol{w}).$

Now the proof is complete.

\end{proof}

\begin{rem}
In some literature (in particular, in \cite{Yoshida}), the Sobolev
norms over $(W,\mathcal{H},\mathbb{P})$ are defined in terms of the
Ornstein-Uhlenbeck operator, which can be regarded as the infinite
dimensional Laplacian under the Gaussian measure $\mathbb{P}.$ An
advantage of using such norms is that they can be easily extended
to the fractional case. According to the well known Meyer's inequalities,
such norms are equivalent to the ones we have used here which are
defined in terms of the Malliavan derivatives. Therefore, the LDP
for the corresponding capacities under these Sobolev norms holds in
exactly the same way.
\end{rem}

\section*{Acknowledgements}

The research of the authors is supported by the Oxford-Man Institute
in University of Oxford, and the first two authors are also supported
by ERC (Grant Agreement No.291244 Esig). We would like to thank Professor
Inahama for bringing the papers \cite{Aida}, \cite{Higuchi}, \cite{Inahama1}
and \cite{Inahama2} to our attention.

\end{document}